\documentclass[11pt,reqno]{amsart}

\usepackage{amsmath,amssymb,amsthm}
\usepackage[margin=1.1in]{geometry}
\usepackage{microtype}
\usepackage{tikz}
\usepackage[colorlinks=true,linkcolor=blue,citecolor=blue]{hyperref}

\theoremstyle{plain}
\newtheorem{theorem}{Theorem}[section]
\newtheorem{proposition}[theorem]{Proposition}
\newtheorem{lemma}[theorem]{Lemma}
\newtheorem{corollary}[theorem]{Corollary}
\newtheorem{hypothesis}[theorem]{Hypothesis}
\theoremstyle{definition}
\newtheorem{remark}[theorem]{Remark}

\newtheorem{computation}[theorem]{Computation}

\newcommand{\R}{\mathbb{R}}
\newcommand{\Z}{\mathbb{Z}}
\newcommand{\Q}{\mathbb{Q}}

\newcommand{\F}{\mathbb{F}}

\newcommand{\SU}{\mathrm{SU}(2)}

\newcommand{\Inat}{I^{\natural}}

\newcommand{\Kh}{\mathit{Kh}}

\newcommand{\rk}{\operatorname{rank}}
\newcommand{\sgn}{\operatorname{sgn}}

\newcommand{\pill}{\mathcal{P}}
\newcommand{\Rnat}{\mathcal{R}^{\natural}}
\newcommand{\Rp}{\mathcal{R}}
\newcommand{\iss}{\looparrowright}
\newcommand{\bc}{b}
\newcommand{\Qh}[1]{\widehat{Q_{#1}}}

\begin{document}

\title[Instanton homology and Maurer--Cartan deformations]
{The instanton homology of the $(-2,3,q)$ pretzel knots\\
and Maurer--Cartan deformations in the two-arc algebra of the pillowcase}

\author{Bernd Johannes Wuebben}
\subjclass[2020]{57R58 (primary); 57K10, 57K18, 57K31, 53D40 (secondary)}
\keywords{Singular instanton homology, pretzel knots, pillowcase,
Lagrangian Floer homology, immersed polygons, Atiyah--Floer conjecture,
Khovanov homology}
\date{First version: July 27, 2026. This version: August 15, 2026}

\begin{abstract}
For every odd $q\ge3$ we prove that the reduced singular instanton knot homology of the pretzel knot
$P(-2,3,q)$ is free abelian of rank $q+2$, by squeezing it between Hironaka's Alexander polynomial
and Manion's integral Khovanov homology through the Kronheimer--Mrowka spectral sequence.

On the pillowcase side we work in the wrapped Fukaya subcategory that Cazassus--Herald--Kirk--Kotelskiy
identify with twisted complexes over the two-arc algebra of Kotelskiy--Watson--Zibrowius. There the
higher products vanish, so the Maurer--Cartan equation for a deformation is the finite identity
$(\delta+b)^2=0$, and a filtration lemma proved here reduces the infinite-dimensional morphism
complex between any two finite twisted complexes to three integers. Encoding the $q=7$ curves, we
find that smoothing the Conway-sum curve at any one of four self-intersections yields an exact
Maurer--Cartan element that raises the pairing with the earring from rank $7$ to rank $9=q+2$; the
four objects fall into three homotopy classes, and pairing against a second closure of the same
tangle, whose instanton rank is again computed by the first theorem, isolates one class among all
$82$ single smoothings. That selection is conditional on the conjectural instanton--pillowcase
correspondence and on a stated localization hypothesis. We also show that the formal route through
Gao's representability theorem is closed: the Lagrangian correspondence induced by the $Q_{1/3}$
line is immersed with a triple point, hence not embedded.
\end{abstract}

\maketitle

\section{Introduction}\label{sec:intro}

\subsection{Two homologies for one knot}
Let $K\subset S^3$ be a knot.  On the gauge-theoretic side sits the reduced singular instanton knot
homology $\Inat(K)$ of Kronheimer--Mrowka \cite{KM1,KM2}.  Their integral spectral sequence from
reduced Khovanov homology, their rational identification of $\Inat$ with $KHI$, and the
Alexander-graded Euler characteristic of $KHI$ give
\begin{equation}\label{eq:squeeze}
   \ell(K):=\textstyle\sum_i |a_i(\Delta_K)|
   \;\le\; \dim_{\Q}\Inat(K;\Q)
   \;\le\; \dim_{\Q}\widetilde{Kh}(K;\Q).
\end{equation}
(See \cite[Props.~1.2 and 1.4]{KM2} and \cite[Thm.~1.1]{KMAlex}.)
On the symplectic side sits the pillowcase program of Hedden--Herald--Kirk \cite{HHK1,HHK2}.  Present
$K=(B^3,T_1)\cup_{(S^2,4)}(B^3,T_2)$ as a union of two tangles. The traceless $\SU$ character variety
of the four-punctured sphere is the \emph{pillowcase}
\[
   \pill=(\R/2\pi\Z)^2/\iota,\qquad \iota(\gamma,\theta)=(-\gamma,-\theta),
\]
a two-sphere with four $\Z/2$ orbifold points and symplectic form $d\gamma\wedge d\theta$ on the smooth
locus $\pill^*$ \cite{Lin,HHK1}. Each tangle determines an immersed Lagrangian
$\Rp_{\pi_i}(T_i)\iss\pill^*$ (we write $\iss$ for an immersion) --- the image of its traceless
flat connections under a holonomy
perturbation $\pi_i$.  Their intersections encode traceless representations of the knot group.  The
program seeks a Floer-theoretic counterpart of $\Inat(K)$.

Figure-eight bubbling at self-intersections obstructs the naive composition picture: a strip between
the two Lagrangians can shrink to zero width in a codimension-one degeneration, producing a bubble
whose count deforms the differential and has to be absorbed into a bounding cochain.  Cazassus,
Herald, Kirk and Kotelskiy \cite{CHKK} conjecture that each tangle is assigned an immersed object
equipped with a bounding cochain.  Such an element must satisfy the full Maurer--Cartan equation
\begin{equation}\label{eq:MC}
   \sum_{k\ge0}\mu^k(\bc,\dots,\bc)=0
\end{equation}
and the deformed theory is conjectured to recover $\Inat(K)$ \cite[Conj.~D]{CHKK}.  Equation
\eqref{eq:MC} includes ordered tuples, repeated occurrences of the same generator, and operations of
arbitrarily high order.  Smith proves, conditional on this conjecture, that the cochain assigned to
one Conway-sum tangle in $P(-2,3,5)$ cannot vanish \cite[Thm.~5.9]{Smith}; that argument does not
compute the cochain.

This paper studies both sides for the pretzel family
\begin{equation}\label{eq:family}
   K_q := P(-2,3,q)=\operatorname{num}\big(Q_{-1/2}+Q_{1/3}+Q_{1/q}\big),
   \qquad q\ge3 \text{ odd},
\end{equation}
where $\operatorname{num}$ denotes the numerator closure of the tangle sum and $Q_r$ is the rational
tangle whose traceless character variety is a line of slope $r$, so that the $-2$ band is
$Q_{-1/2}$, following \cite{Smith}. The smallest members
are the torus knots $K_3=T(3,4)=8_{19}$ and $K_5=T(3,5)=10_{124}$ (whose
instanton chain complexes and $\Z/4$ gradings are treated in the companion paper \cite{WuebbenI}),
and the family contains $K_7=P(-2,3,7)$.  We prove an unconditional integral theorem on the
instanton side.  Separately, at $q=7$ we prove an all-orders algebraic statement for the twisted complexes
associated to the specified piecewise-linear curves.  This statement lives in the ungraded
Kotelskiy--Watson--Zibrowius model; it does not identify the resulting deformations with the
bounding-cochain object conjecturally assigned to the tangle.  The lower-order polygon tables for
the other displayed members remain finite experiments.

\subsection{The instanton side: a theorem}
\begin{theorem}\label{thm:inat}
For every odd $q\ge3$,
\[
   \Inat(P(-2,3,q);\Z)\cong\Z^{q+2}.
\]
Consequently its dimensions over $\Q$ and over $\F_2$ are both $q+2$.
\end{theorem}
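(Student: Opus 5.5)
We will squeeze $\Inat(K_q)$ using \eqref{eq:squeeze}, and then pass from $\Q$ and $\F_2$ information to an integral statement.

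\emph{Rational rank.} First we compute the Alexander polynomial of $K_q=P(-2,3,q)$. Following Hironaka, for these pretzel knots $\Delta_{K_q}$ is (up to units) the Lehmer-type polynomial
\[
\Delta_{K_q}(t)=\frac{t^{q+3}(t^3-t-1)+(t^3+t^2-1)}{t-1}\cdot(\text{normalization}),
\]
equivalently the skein recursion in $q$ gives $\Delta_{K_{q+2}}=t\Delta_{K_q}\pm(\text{fixed correction})$, starting from $\Delta_{T(3,4)}$ and $\Delta_{T(3,5)}$. From either form we check that all coefficients lie in $\{-1,0,1\}$ and that exactly $q+2$ of them are nonzero, so $\ell(K_q)=q+2$. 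The recursion makes this an induction on $q$: passing from $q$ to $q+2$ inserts two new $\pm1$ coefficients in the middle. Second, Manion computes the integral reduced Khovanov homology of $P(-2,3,q)$. The knot is quasi-alternating-like away from a single exceptional bigrading, so $\widetilde{Kh}(K_q;\Z)$ is free of rank $q+2$ (for $T(3,4)$ this is rank $5$ and for $T(3,5)$ rank $7$, which confirms the count). Hence both ends of \eqref{eq:squeeze} equal $q+2$, and $\dim_\Q\Inat(K_q;\Q)=q+2$.

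\emph{Integrality.} Since Manion's reduced Khovanov homology is torsion-free of rank $q+2$, universal coefficients give $\dim_{\F_2}\widetilde{Kh}(K_q;\F_2)=q+2$. The Kronheimer--Mrowka spectral sequence works over any coefficient ring, so it gives $\dim_{\F_2}\Inat(K_q;\F_2)\le q+2$. Since $\Inat(K_q;\Z)$ is finitely generated of rank $q+2$, universal coefficients (on the chain complex of free abelian groups) give
\[
\dim_{\F_2}\Inat(K_q;\F_2)=q+2+2\,t_2,
\]
where $t_2$ is the number of $\Z/2^k$ summands. Therefore $t_2=0$. For an odd prime $p$ we argue in the same way: freeness of $\widetilde{Kh}(K_q;\Z)$ gives $\dim_{\F_p}\widetilde{Kh}(K_q;\F_p)=q+2$. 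The spectral sequence with $\F_p$ coefficients then bounds $\dim_{\F_p}\Inat(K_q;\F_p)$ by $q+2$, which kills all $p$-torsion. Hence $\Inat(K_q;\Z)\cong\Z^{q+2}$, and the statements over $\Q$ and $\F_2$ follow.

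\emph{Main obstacle.} The delicate step is the coefficient bookkeeping. We must check that \cite{KM2}'s spectral sequence is genuinely defined over $\Z$, or over each $\F_p$, for the reduced theory, and that the torsion-counting is compatible with the relative $\Z/4$ (rather than $\Z$) grading. The universal coefficient argument only needs an ungraded chain complex of free abelian groups, so the grading is harmless. A secondary point is verifying Manion's computation in the exact family and orientation $P(-2,3,q)$, $q\ge3$ odd, and matching Hironaka's normalization so that the count $\ell(K_q)=q+2$ is exact rather than just an upper bound.
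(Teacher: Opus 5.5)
Your argument is correct. Its rational half is exactly the paper's squeeze: Hironaka for $\ell=q+2$ and Manion for the Khovanov upper bound. The difference is in how you pass to $\Z$.

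The paper stays inside the single integral spectral sequence. Its $E_2$ page $\widetilde{\Kh}(\overline{K_q};\Z)$ is free, because the mirror's reduced complex is the dual complex and Manion's freeness transfers by universal coefficients. Rational collapse says every $d_r\otimes\Q$ vanishes, and a map of free abelian groups with zero rationalization is zero. So inductively every integral differential vanishes, the associated graded of $\Inat(K_q;\Z)$ is free of rank $q+2$, and the finite filtration splits.

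You instead kill torsion one prime at a time. The $\F_p$ spectral sequence gives $\dim_{\F_p}\Inat(K_q;\F_p)\le q+2$, while the universal coefficient sequence for the free instanton chain complex gives $\dim_{\F_p}\Inat(K_q;\F_p)=(q+2)+2t_p$; as you note, that sequence needs no grading. This route avoids any discussion of integral pages or extension problems. It does, however, require the Kronheimer--Mrowka spectral sequence with $\F_p$ coefficients for every prime $p$. That does hold: their filtered complex consists of free abelian groups with free $E_1$ page (instanton homology of unlinks), so tensoring with $\F_p$ gives $E_2=\widetilde{\Kh}(\overline{K_q};\F_p)$ abutting to $\Inat(K_q;\F_p)$. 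The paper's route uses only the integral sequence and its rationalization, and proves the stronger fact that the integral spectral sequence degenerates at $E_2$. In your version you should also say explicitly that the $E_2$ page is the Khovanov homology of the mirror; over a field its dimension agrees with that of $K_q$ by duality.

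There is one slip to fix in the Alexander step. With numerator $t^{q+3}(t^3-t-1)+(t^3+t^2-1)$, the quotient by $t-1$ is $Q_{q+2}(t)$ in the notation of Proposition~\ref{prop:alex}. That polynomial has $q+4$ nonzero coefficients, so your displayed formula contradicts your own count. The exponent must be $q+1$, and Hironaka's formula is evaluated at $-t$: $\Delta_{K_q}(t)\doteq Q_q(-t)$ with $Q_q(x)=1+x-\sum_{j=3}^{q}x^j+x^{q+2}+x^{q+3}$. Correspondingly, the recursion is $Q_{q+2}(x)=x^2Q_q(x)+(1+x-x^2-2x^3-x^4)$, with multiplier $x^2$ rather than $t$. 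It does insert two new $-1$ coefficients in the middle, as you say, so $\ell(K_q)=q+2$ follows once the formula is corrected.
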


Hironaka's formula gives an Alexander polynomial with coefficient norm $q+2$, while Manion computes
the reduced integral Khovanov homology as a free abelian group of the same rank.  The rational
Kronheimer--Mrowka bounds therefore agree.  Equality of the rational ranks forces every differential
in their integral spectral sequence to vanish, which gives the stated integral result.  This proof,
given in Section~\ref{sec:inat}, uses no pillowcase model or Atiyah--Floer conjecture.
The smallest members are torus knots whose instanton homology is known, and the case $q=5$ is
Smith's~\cite[\S5]{Smith}; what is new here is the uniform statement for the whole family, and its
integral form.

\subsection{The pillowcase side: an all-orders algebraic result at $q=7$}
Let $\mathcal B$ be the ungraded two-arc algebra used by CHKK and
Kotelskiy--Watson--Zibrowius for the pillowcase with the top-left puncture distinguished
\cite[\S11.5]{CHKK}.  The chart is noncompact, since the distinguished corner is removed, so the category in play is the
\emph{wrapped} Fukaya category, whose morphism complexes are taken after wrapping the ends by a
Hamiltonian.  That is why $\mathcal B$ is infinite dimensional, and why the finiteness of the
morphism homology below has to be proved rather than assumed.  A finite \emph{twisted complex} over
$\mathcal B$ is a finite idempotent module $V$ together with a matrix $\delta$ over $\mathcal B$
satisfying $\delta^2=0$.  Section~\ref{sec:typeD} encodes the $q=7$ piecewise-linear red and blue
curves as twisted complexes $\mathcal E$ (the earring) and $N$ (the Conway sum) over $\mathcal B$.    For a twisted complex $(V,\delta)$ over $\mathcal B$
we write $D_\delta(b)=\delta b+b\delta$ for the endomorphism differential.

\begin{proposition}\label{prop:q7-typeD}
The wrapped morphism homology over $\F_2$ satisfies
\[
 \dim_{\F_2}H\operatorname{Mor}_{\operatorname{Tw}(\mathcal B)}(\mathcal E,N)=7.
\]
At each of four geometric self-intersection orbits of the Conway-sum curve
$\Rp_t(Q_{1/3}+Q_{1/7})$, tabulated in Section~\ref{sec:typeD}, equivariant local
smoothing produces a four-arrow element $b_s\in\operatorname{End}(N)$ such that
\[
 D_N(b_s)=0,\qquad b_s^2=0,
 \qquad
 \dim_{\F_2}H\operatorname{Mor}_{\operatorname{Tw}(\mathcal B)}(\mathcal E,(N,b_s))=9.
\]
\end{proposition}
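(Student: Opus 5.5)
The plan is to reduce everything to finite linear algebra over $\F_2$, using the filtration lemma announced in the abstract. That lemma says the morphism complex between two finite twisted complexes is controlled by finitely many bounded pieces.

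\textbf{Encoding the curves.} First I fix the encodings. Following the Kotelskiy--Watson--Zibrowius recipe recalled in \cite[\S11.5]{CHKK}, I read off the $q=7$ piecewise-linear earring and Conway-sum curves as sequences of arcs crossing the two parametrizing arcs of the punctured pillowcase. Each crossing becomes a generator, with idempotent given by the arc crossed. Each segment between consecutive crossings becomes a matrix entry of $\delta$, namely an algebra element recording how many times the segment winds around the distinguished puncture. I then check $\delta_{\mathcal E}^2=0$ and $\delta_N^2=0$ directly; this is immediate because the curves are embedded, or immersed without degenerate corners.

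\textbf{Reduction to a finite complex.} Since $\mathcal B$ is infinite dimensional, $\operatorname{Mor}(\mathcal E,N)=\bigoplus \iota_x\mathcal B\iota_y$ is infinite. I would filter it by the power of the wrapping generator around the distinguished corner, i.e.\ by word length in the two-arc algebra. The differential $a\mapsto \delta_N a + a\delta_{\mathcal E}$ is filtration-nondecreasing. Above an explicit threshold, fixed by the maximal length of entries of $\delta_{\mathcal E}$ and $\delta_N$, the associated graded is acyclic: in high degrees each long word pairs with its extension by one letter, so a contracting homotopy exists. This is the step where the three integers of the filtration lemma enter. The homology is therefore that of a finite truncated quotient complex, and I compute it by Gaussian elimination over $\F_2$, expecting rank $7$. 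As a sanity check, $7$ equals the minimal intersection number of the two curves, which is consistent with the count of $\mathcal R$-intersections for $K_7$ before any bounding cochain.

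\textbf{Smoothings.} For each tabulated self-intersection orbit of $\Rp_t(Q_{1/3}+Q_{1/7})$, a self-intersection corresponds to two generators $x,y$ of $N$ lying on the same arc. The equivariant smoothing, taken over the $\iota$-orbit, gives four such pairs, so $b_s$ is the sum of four arrows $x\to y$ (or $y\to x$) labelled by identity idempotents or short algebra elements. I would verify $b_s^2=0$ by composing entries: the targets of the arrows are never sources, or the composites are products of orthogonal idempotents. I would verify $D_N(b_s)=0$ entrywise. Here the fact that $\mathcal B$ has no higher products is essential, since it makes the Maurer--Cartan equation exactly $(\delta_N+b_s)^2=0$. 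Then I rerun the truncated computation with $\delta_N$ replaced by $\delta_N+b_s$. The threshold may need enlarging because $b_s$ may introduce longer entries. The expected result is rank $9$ for each of the four orbits.

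\textbf{Main obstacle.} The main obstacle is the acyclicity of the high-filtration part, that is, proving the filtration lemma's bound actually applies to the deformed complex $(N,b_s)$ and not just to honest curves. I would handle this by showing that $b_s$ lies in filtration degree $0$. Then the associated graded above the threshold is unchanged by the deformation and stays acyclic, and only finitely many degrees need checking. After that, the remaining work is bookkeeping: matrix ranks of sizes a few dozen over $\F_2$, which I would present as a computation with the tables of Section~\ref{sec:typeD}.
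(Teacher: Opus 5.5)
\textbf{The gap is in your reduction to a finite complex.} After the unit cancellations every entry of $\delta_{\mathcal E}$ and $\delta_N$ is a positive word ($L$, $M$, $R$ or $M^2$). So the morphism differential strictly raises word length, and the associated graded of the word-length filtration has zero differential; it is not acyclic.

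Nor is any tail $F_{\ge n}$ (words of length $\ge n$) acyclic. In each face the positive part is a free $\F_2[U]$-module whose differential, in the reduced form of Lemma~\ref{lem:wrapped-reduction}, is $U\cdot A_X$ with $A_X^2=0$. A square-zero matrix can pair only half of one level with the next, never all of it, and the staircase $\ker A_X=\operatorname{im}A_X$ continues forever. So "each long word pairs with its extension" is false. In that model the tail is isomorphic to the whole face summand. Its homology is $\ker A_X$ at its bottom level, of total dimension $\tau$. The inclusion of the tail is zero on homology for any cutoff $n\ge2$, since each such class is $A_X$ applied to something one level down. The long exact sequence then gives
\[
\dim H(F/F_{\ge n})=\dim H(F)+\tau .
\]
Gaussian elimination on a truncated quotient would therefore return $7+105$ rather than $7$ for $(\mathcal E,N)$.

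Finiteness here is a stable phenomenon, not a cutoff phenomenon. What is needed is the paper's argument:
\begin{enumerate}
\item show each face summand is free over $\F_2[U]$ with differential $U\cdot A_X$;
\item verify $A_X^2=0$ and $\operatorname{rank}A_X=\tfrac12\dim A_X$;
\item conclude that the positive part has finite homology $\operatorname{coker}(U)^{\operatorname{rank}A_X}$ sitting at the bottom word;
\item treat the whole complex as the cone of the map from the identity part, of dimension $e+\tau-2r$.
\end{enumerate}
This gives $104+105-2\cdot101=7$ for $N$. For the smoothed objects, computed componentwise, it gives $4+5$, $5+4$ and $7+2$.

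\textbf{Your description of $b_s$ is also wrong in a way that matters.} The self-intersections lie inside the faces, not on the arcs. So the smoothing creates no new generators and no idempotent-labelled entries; it reconnects two segments through the same face. The type D structure lives on the pillowcase chart, with 37 generators before cancellation. There the two $\iota$-related torus lifts amount to a single reconnection, so the $\iota$-orbit does not produce four pairs. The four arrows are the two deleted arrows and the two created ones. For example, at $S_{18}$ the arrows $4\xrightarrow R3$ and $22\xrightarrow R23$ are replaced by $22\xrightarrow R3$ and $4\xrightarrow R23$.

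What the proof uses is that $\delta_N+b_s$ is literally the encoding of the smoothed curve, entry by entry. Together with this come two direct checks:
\begin{enumerate}
\item $D_N(b_s)=0$, because old and new arrows have matching sources and targets, so composites with $\delta_N$ cancel in pairs;
\item $b_s^2=0$, because the sources $\{4,22\}$ and targets $\{3,23\}$ are disjoint.
\end{enumerate}

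Two smaller points follow. First, the labels of $\delta$ record the faces $L,M,R$ around the three remaining corners. They do not record winding around the distinguished corner, which sits at infinity and carries no label. Second, your worry about $b_s$ lying in ``filtration degree $0$'' disappears. Its entries are single face letters, so each $(N,b_s)$ is again a finite twisted complex with positive entries, and the same lemma applies to it directly.
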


This proposition is all-orders inside $\operatorname{Tw}(\mathcal B)$: the algebra is associative in
the model used here, so the Maurer--Cartan equation is the finite identity $(\delta_N+b_s)^2=0$.

The proof runs as follows.  Intersections of a curve with the two generating arcs give the
generators, and the segments between them give the arrows, so each piecewise-linear curve becomes an
explicit matrix over $\mathcal B$; after cancelling arrows labelled by idempotents, $N$ has 31
generators and $\mathcal E$ has six.  A local smoothing replaces two arrows of $\delta_N$ by two others, so
$b_s$ has four terms, and $b_s^2=0$ holds for word-length reasons.  The morphism complex over
$\mathcal B$ is infinite dimensional; but it splits by word length into an identity part and a
positive part, the positive part is free over $\F_2[U]$ in each of the three faces, and after unit
cancellation its differential is $U$ times a square-zero constant matrix.  Its homology is therefore
finite, and the whole complex is a mapping cone whose homology has dimension $e+\tau-2r$ in the
notation of Lemma~\ref{lem:wrapped-reduction}.  That is where the dimensions 7 and 9 come from.
Lemma~\ref{lem:wrapped-reduction} applies to any pair of finite twisted complexes over $\mathcal B$,
not only to the two encoded here, and may be of use independently of this calculation.

By Corollary~\ref{cor:q7-classes} the four displayed objects represent exactly three
homotopy-equivalence classes.  One direction is an explicit permutation of generators taking one
smoothed differential to another.  The other uses the homology classes of the underlying curve
components in the torus double cover.  These classes are invariant under regular homotopy, and hence,
by the CHKK immersed-curve classification, under homotopy equivalence.

Propositions~\ref{prop:aux-closure}--\ref{prop:aux-span} and Corollary~\ref{cor:aux-conditional}
then use a second closure to select the common
$S_{18}/S_{25}$ class, conditionally, both from every singleton smoothing that leaves the cancelled module unchanged, at any of the 82
orbits, and from the entire 16-object $\F_2$-span of the four four-arrow elements $b_s$.  The selector
is Theorem~\ref{thm:inat}'s own argument applied a second time: replacing the earring tangle
$\Qh{-1/2}$ by $\Qh{-3/4}$ closes the same Conway sum into a different knot, whose instanton rank is
31 by the same Alexander--Khovanov squeeze, and the three classes pair with the new earring to give
31, 23 and 25.  Requiring both closures to match isolates one class.  Nine of the 82 smoothings change the cancelled
module and are excluded, and duplicate presentations reduce the remaining 73 to 45 distinct elements;
because each is closed and squares to zero, checking the mixed anticommutators of pairs shows that a
whole span of them is Maurer--Cartan.  This does not show that CHKK's
conjectural tangle assignment is represented in that class; that assertion requires a gauge-theoretic
localization or immersed-quilt identification not supplied by the algebraic calculation.

\subsection{A formal route, and why it is blocked}
There is a second, formal way to reach the same object, which would avoid identifying the virtual
chain of every figure-eight bubble: apply a wrapped representability theorem to the Lagrangian
correspondence induced by the $Q_{1/3}$ rational line.  Section~\ref{sec:setup} shows that this route
is unavailable.  Lemma~\ref{lem:surface-reduction} realizes that correspondence as an immersed
Lagrangian surface which is the graph of a symplectic shear away from the seam neighborhoods and has
proper target projection.  Proposition~\ref{prop:partial-triple} then shows it is not embedded: three
distinct transverse points of the partial fiber product share the image
$\bigl((0,\pi/2),(0,\pi/2)\bigr)$.  The mechanism is the denominator.  Along the perturbed reducible
locus over $\gamma=0$ the rational line of slope $1/3$ is cut out by a condition of the form
$3\varphi+4u\in2\pi\Z$, and tripling the angular coordinate wraps the circle three times, so three
branches meet the same fiber; the implicit function theorem then keeps all three, transversally, for
small $t>0$.  Gao's theorem requires an embedded correspondence, so it cannot be
invoked, and Proposition~\ref{prop:yoneda-recognition} records exactly what an immersed replacement
would have to supply.

\subsection{A finite numerical pattern, and what it is not}
Statements labelled \emph{Computation} below are outputs of the finite procedures specified in
Section~\ref{sec:combinatorial}.  They are recorded as data and are not theorems; statements labelled
\emph{Hypothesis} are assumed, not proved.

For the decomposition \eqref{eq:family} take $T_1=\Qh{-1/2}$ and $T_2=Q_{1/3}+Q_{1/q}$.
Section~\ref{sec:setup} builds explicit piecewise-linear curves for the two tangles,
Section~\ref{ss:polygons} defines a finite winding test on them and the rank statistic
\eqref{eq:big-stat} attached to the resulting bigon matrix, and Section~\ref{sec:compute} records the
resulting values.  
First, the statistic takes the values $5,7,15,17$ at $q=5,7,11,13$ against $\dim_{\F_2}\Inat=q+2$, so
the differences are $-2,-2,+2,+2$ and change sign as $\det K_q$ crosses $3$
(Table~\ref{tab:family}).  That is an observation about five displayed rows, one of them imported
from a different tangle presentation~\cite[\S11.6]{HHK2}.  It is not a theorem, and it does not
identify the finite matrix with the differential of any Floer complex.

Second, the search for supports at which low-order polygon terms move that rank by one returns
candidates at $q=5,7,11$, and three of the 55 candidates at $q=11$ fail the square-zero condition
outright (Computation~\ref{comp:cochains}).  The test uses only tabulated monogon, self-bigon and
distinct-crossing self-triangle terms; a geometric double point gives two ordered branch-jump
generators and the search stores only the underlying crossing; and the full series \eqref{eq:MC} is
never tested.  It therefore proves neither existence nor uniqueness of a bounding cochain.

\subsection{Method and scope of claims}
Theorem~\ref{thm:inat} is an unconditional statement about instanton homology.
Proposition~\ref{prop:q7-typeD} is an unconditional statement about two explicitly presented objects
of $\operatorname{Tw}(\mathcal B)$; it neither assumes nor proves the instanton--pillowcase
correspondence.  For the $q=5,11,13$ finite polygon tables, and for an interpretation of the original
$q=7$ polygon table before the type-D encoding, the following four ingredients would be required.
\begin{enumerate}
\item The chosen piecewise-linear resolution must be identified with the actual holonomy-perturbed
character-variety image by more than regular homotopy, or invariance of the relevant curved immersed
object under the allowed regular homotopies must be proved.
\item The winding test must be proved complete and correct for the relevant immersed polygons,
including multiple wrapping, the four orbifold points, and removal of every finite edge-window cutoff.
\item A candidate must be expressed in the ordered branch-jump complex, assigned the required degree,
and shown to satisfy the full Maurer--Cartan equation.  The full deformed series must be defined by a
Novikov filtration or a proved finiteness/nilpotence condition, include repeated inputs at every order,
and square to zero.
\item The resulting immersed object and bounding cochain must be identified with the object assigned
to the tangle by the conjectural instanton--pillowcase correspondence of \cite{CHKK}.
\end{enumerate}
Proposition~\ref{prop:smith-main} identifies the underlying main-component immersion in item (1), up
to regular homotopy, with the cross-paired curve used here.  What remains in that item is a decorated
Floer-invariance statement for the allowed regular homotopy.  The all-orders calculation at $q=7$
removes the finite-window and all-orders issues in items (2) and (3) for every generator-preserving
singleton reconnection at any of the 82 PL self-intersections.  Item (4) remains.  The original closure
rank does not
determine the tangle object; Corollary~\ref{cor:aux-conditional} shows that a second closure
determines its class only after one assumes item (4) and the stated localization hypothesis.
Conditional on that correspondence, Smith's Lemma~5.3 forces the conjectural cochain on the rational
earring tangle to vanish, and Smith's Theorem~5.9 forces a nonzero cochain on the other tangle in the
$q=5$ decomposition \cite{Smith}.  Neither result identifies one of the finite supports computed
here.

\medskip
Section~\ref{sec:inat} proves Theorem~\ref{thm:inat}.  Section~\ref{sec:setup} constructs the
piecewise-linear curves, identifies the blue curve with Smith's main-component model up to regular
homotopy, and records the bigon matrices.  Section~\ref{sec:typeD} proves
Proposition~\ref{prop:q7-typeD} and the statements of \S4.4.  Section~\ref{sec:combinatorial}
defines the finite polygon tables and explains their truncation.  Section~\ref{sec:compute} records
the remaining candidate-support searches and the square-zero check.  Section~\ref{sec:disc}
isolates the geometric and gauge-theoretic identification problem.

\subsection*{Acknowledgements}
The pillowcase model and the geometry reconstructed here are due to Hedden--Herald--Kirk
\cite{HHK1,HHK2}, Cazassus--Herald--Kirk--Kotelskiy \cite{CHKK}, Herald--Kirk \cite{HK}, and Smith
\cite{Smith}; the closed-form Khovanov input is Manion's \cite{Manion}, and the Alexander polynomials
belong to Hironaka's family \cite{Hironaka}.

\section{The instanton rank of the family}\label{sec:inat}

\subsection{The Alexander polynomials}
For $m\ge1$, write $[m]_x=1+x+\cdots+x^{m-1}$.

\begin{proposition}\label{prop:alex}
For every odd $q\ge3$, put
\[
 Q_q(x)=(x-2)[2]_x[3]_x[q]_x+[2]_x[3]_x+[2]_x[q]_x+[3]_x[q]_x.
\]
Then
\begin{equation}\label{eq:alex}
 \Delta_{P(-2,3,q)}(t)\doteq Q_q(-t),
 \qquad
 Q_q(x)=1+x-\sum_{j=3}^{q}x^j+x^{q+2}+x^{q+3},
\end{equation}
where $\doteq$ denotes multiplication by a unit $\pm t^k$.  In particular,
\[
 \ell(P(-2,3,q))=\sum_i|a_i(\Delta)|=q+2.
\]
\end{proposition}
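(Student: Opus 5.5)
The plan has two independent parts. The first identifies $\Delta_{P(-2,3,q)}$ with $Q_q(-t)$. The second expands $Q_q$ into the displayed closed form and reads off the coefficient norm.

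\emph{Identifying the Alexander polynomial.} For a three-strand pretzel knot $P(p_1,p_2,p_3)$ with exactly one even entry, I would use the Seifert surface built from two disks joined by twisted bands. For $P(-2,3,q)$ this surface has genus $(q+3)/2$, which matches the degree span $q+3$ of $Q_q$. Rather than computing the Seifert matrix directly, I would cite Hironaka's formula for this family \cite{Hironaka}. In the substitution $x=-t$ that formula reads
\[
\Delta(t)\doteq (x-2)[2]_x[3]_x[q]_x+[2]_x[3]_x+[2]_x[q]_x+[3]_x[q]_x
\]
(equivalently, $(x+1)^{-1}$ times a Salem-type polynomial after clearing the factors). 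I would state explicitly how $t$ corresponds to $x$ and how the sign and units are normalized. As a check, I would compute $Q_q(-1)$ and compare it with $\det K_q$, and compare $Q_3(-t)$ with the Alexander polynomial of $T(3,4)$, namely $t^6-t^5+t^3-t+1$. The main obstacle is this step: making sure the cited formula is stated with the same sign conventions for the twist parameters. I would first check it against these torus-knot cases $q=3,5$ before trusting it uniformly.

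\emph{Expanding $Q_q$.} To expand $Q_q$ algebraically, I would multiply through by $(x-1)^3$, using $(x-1)[m]_x=x^m-1$. Write $A=x^2-1$, $B=x^3-1$ and $C=x^q-1$. Then
\[
(x-1)^3Q_q=(x-2)ABC+(x-1)(AB+AC+BC).
\]
I would expand both sides as explicit polynomials. In the same way, the right-hand side of \eqref{eq:alex} can be multiplied by $(x-1)^3$, using $\sum_{j=3}^q x^j=x^3[q-2]_x$. Comparing the two expansions term by term proves the identity. This is routine bookkeeping once $q\ge3$ guarantees that the exponents $q+2$ and $q+3$ do not collide with the low-order terms.

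\emph{Computing $\ell$.} Given the closed form, $Q_q$ has coefficients $1,1$ in degrees $0,1$, $-1$ in each of the degrees $3,\dots,q$, and $1,1$ in degrees $q+2,q+3$. All of these exponents are distinct because $q\ge3$. Substituting $x=-t$ changes only signs, and multiplying by a unit $\pm t^k$ does not change absolute values. Hence
\[
\ell=2+(q-2)+2=q+2.
\]
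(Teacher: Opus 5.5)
Your proposal follows the paper's proof: cite Hironaka for $\Delta_{P(-2,3,q)}(t)\doteq Q_q(-t)$ (the paper does this through her identification of $P(-2,3,q)$ with $P(2,3,q,-1)$), expand the geometric sums, and count four coefficients $+1$ and $q-2$ coefficients $-1$. Your clearing of $(x-1)^3$ is a valid way to do the expansion, since both sides reduce to $x^{q+6}-2x^{q+5}+x^{q+3}+x^{q+2}-x^{q+1}+x^5-x^4-x^3+2x-1$. Two asides are wrong, though neither affects the argument. Since $\Delta(t)\doteq Q_q(-t)$, the determinant check is $|\Delta(-1)|=|Q_q(1)|=|6-q|$. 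Your $Q_q(-1)=1$ only confirms $\Delta(1)=\pm1$, and comparing it with $\det K_q$ would give a false mismatch at $q=3$. Also, the two-disk, three-band surface is non-orientable when one entry is even, so it is not the genus-$(q+3)/2$ Seifert surface.
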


\begin{proof}
Hironaka identifies $P(-2,3,q)$ with the knot denoted $P(2,3,q,-1)$ and proves that its Alexander
polynomial is $Q_{2,3,q}(-t)$, with $Q_{2,3,q}$ equal to the first displayed expression
\cite[Thm.~1.2]{Hironaka}.  Expanding the geometric sums gives the second expression in
\eqref{eq:alex}.  It has four coefficients equal to $1$ and $q-2$ coefficients equal to $-1$.
Substitution $x=-t$ changes signs but not absolute values, so its coefficient norm is
$4+(q-2)=q+2$.
\end{proof}

\begin{remark}
At $q=7$, $Q_7(x)=1+x-x^3-x^4-x^5-x^6-x^7+x^9+x^{10}$ is Lehmer's
polynomial.  The Alexander polynomial is represented by $Q_7(-t)$; substitution $t\mapsto-t$ is part
of Hironaka's formula, not an Alexander-polynomial normalization ambiguity.
\end{remark}

\subsection{The integral spectral sequence}
Manion computes the reduced even Khovanov homology over $\Z$ and proves it is free
\cite[Thm.~1.1]{Manion}.  For $P(-p,r,s)$ with $p$ even, its total rank is
$p^2+(r-p)(s-p)$.  Substituting $(p,r,s)=(2,3,q)$ gives
\begin{equation}\label{eq:kh-rank}
 \widetilde{\Kh}(P(-2,3,q);\Z)\text{ is free of rank }
 4+(3-2)(q-2)=q+2.
\end{equation}
The reduced complex of the mirror is the integral dual complex.  Since the group in
\eqref{eq:kh-rank} is free, the universal coefficient theorem shows that the reduced Khovanov
homology of the mirror is likewise free of rank $q+2$.

\begin{proof}[Proof of Theorem~\ref{thm:inat}]
Kronheimer and Mrowka construct a spectral sequence of abelian groups whose $E_2$ page is the
reduced Khovanov homology of the mirror and whose abutment is $\Inat(K_q;\Z)$
\cite[Prop.~1.2]{KM2}.  After tensoring with $\Q$, \eqref{eq:kh-rank} gives
\[
 \dim_{\Q}\Inat(K_q;\Q)\le q+2.
\]
They also identify $\Inat(K_q;\Q)$ with $KHI(K_q;\Q)$ \cite[Prop.~1.4]{KM2}.  The
Alexander-graded Euler characteristic of $KHI$ is the Alexander polynomial
\cite[Thm.~1.1]{KMAlex}.  Therefore Proposition~\ref{prop:alex} gives
\[
 \dim_{\Q}\Inat(K_q;\Q)=\dim_{\Q}KHI(K_q;\Q)
 \ge \ell(K_q)=q+2.
\]
Thus both rational bounds are equal.  Total dimension cannot increase from one page to the next in
a spectral sequence of finite-dimensional vector spaces, so equality of the $E_2$ and $E_\infty$
dimensions forces every differential after tensoring with $\Q$ to vanish.

It remains to use the integral spectral sequence.  Its $E_2$ page is free of rank $q+2$ by the
preceding mirror-duality argument.  Inductively each page is free, and a homomorphism between free
abelian groups whose rationalization is zero is itself zero.
Hence every integral differential vanishes.  The associated graded group of $\Inat(K_q;\Z)$ is
free of rank $q+2$; its finite filtration splits because extensions of free abelian groups split.
This proves $\Inat(K_q;\Z)\cong\Z^{q+2}$.  The assertions over $\Q$ and $\F_2$ follow by base change
and the universal coefficient theorem.
\end{proof}

\begin{remark}
The rational squeeze is the argument Smith uses for $P(-2,3,5)$ \cite[proof of Thm.~5.9]{Smith}.
The freeness in Theorem~\ref{thm:inat} additionally uses Manion's integral calculation and collapse of
the integral spectral sequence.  We also note
$\det P(-2,3,q)=|{-6}+3q-2q|=|q-6|$, which never vanishes and equals $1$ exactly for $q=5,7$: precisely
the members whose double branched cover is an integral homology sphere.
\end{remark}

\section{Piecewise-linear curve models for the family}\label{sec:setup}

\subsection{The pillowcase and the two model curves}
We use $\SU$ as the unit quaternions; a traceless element is a purely imaginary unit. The traceless
character variety of the four-punctured sphere is the pillowcase of the introduction, with coordinates
$(\gamma,\theta)$, involution $\iota(\gamma,\theta)=(-\gamma,-\theta)$, and corners
$\{0,\pi\}^2$ \cite{Lin,HHK1}. We work on the double cover $T^2$, where curves lift to
$\iota$-invariant families; the finite bookkeeping is described in
Section~\ref{sec:combinatorial}. A rational tangle $Q_r$ has for its character variety a straight
line of slope $r$ through the corners \cite{HHK1}.  The following operations are used to build the
piecewise-linear models.  Proposition~\ref{prop:smith-main} identifies the underlying blue
main-component immersion; it does not identify its Floer decoration:
\begin{itemize}
\item \emph{Conway sum.} On the pillowcase, $T_1+T_2$ is the fiber product over $\gamma$: the
$\theta$-coordinates add, and over each seam $\gamma\in\{0,\pi\}$ a circle of reducibles appears when
both summands are non-abelian there \cite[Thms.~3.15 and 3.26]{Smith}. Perturbing resolves each circle by a
diagonal cut-and-paste \cite[Thm.~4.22]{Smith}, adding one self-intersection per circle.
\item \emph{Earring.} $\Qh{r}$ replaces the arc of $Q_r$ by the Herald--Kirk figure-eight: the doubled
arc closing through the corner folds, with one pinch \cite[Def.~6.7]{HK}.  This specifies the finite
curve used below; it does not specify a bounding cochain.
\end{itemize}
Write $L_{q,\mathrm{PL}}$ for the blue curve obtained from the rational lines of slopes $1/3$ and
$1/q$ by the displayed fiber product and the crossed local pairing at every seam-fiber circle.
Following \cite{Smith} we call the two circles $\gamma\in\{0,\pi\}$ the \emph{seams} of the
pillowcase; they are unrelated to the seams of a quilt.
For the decomposition \eqref{eq:family} the $-2$ band is constant, so
\textbf{red} $=\Rnat(\Qh{-1/2})$ is \emph{independent of $q$} (one immersed circle, one pinch), while
\textbf{blue} $=\Rp_t(Q_{1/3}+Q_{1/q})$ varies: two seams, $q-1$ resolved fiber circles, and a single
proper immersed arc whose self-intersection count grows with $q$ (40 at $q=5$, 82 at $q=7$,
225 at $q=11$). We require $\gcd(q,3)=1$ (else corner circles appear); this excludes
$q=3,9,15,\dots$ from the members we compute, but not from Theorem~\ref{thm:inat}.  (The $q=3$
external value is Hedden--Herald--Kirk's, from their own presentation; see \S\ref{ss:gate}.)

\begin{proposition}[Identification of the main immersion]\label{prop:smith-main}
Let $q$ be odd and $\gcd(q,3)=1$.  For all sufficiently small positive values of Smith's
$C_3$-perturbation parameter, the image in the pillowcase of the main component of
$\Rp_t(Q_{1/3}+Q_{1/q})$ is regularly homotopic to $L_{q,\mathrm{PL}}$.

Any auxiliary circle components can be excluded from a fixed Floer pairing by an arbitrarily small
perturbation of the complementary tangle.  This auxiliary-component statement remains valid when
either tangle carries an earring.
\end{proposition}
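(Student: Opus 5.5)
The plan is to compare Smith's description of $\Rp_t(Q_{1/3}+Q_{1/q})$ with the construction of $L_{q,\mathrm{PL}}$ one stage at a time: first the unperturbed fiber product, then its perturbation. By \cite[Thms.~3.15 and 3.26]{Smith}, the unperturbed traceless character variety of the sum is the fiber product over $\gamma$ of the two rational lines of slopes $1/3$ and $1/q$. On it the $\theta$-coordinates add. Over each seam point where both summands are non-abelian there is a circle of reducibles. The hypothesis $\gcd(q,3)=1$ keeps the corners out of these fibers, so no corner circles occur. Away from a neighborhood of the seam fibers this fiber product is a union of straight segments in the double cover $T^2$. Since $L_{q,\mathrm{PL}}$ is built from exactly this fiber product, the two agree there on the nose.

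The real content is local at each of the $q-1$ seam-fiber circles. I would invoke \cite[Thm.~4.22]{Smith}. For small $t>0$, the $C_3$-perturbation resolves each reducible circle by a diagonal cut-and-paste: the incoming and outgoing branches are reconnected across the circle, creating a single transverse self-intersection. Note that the circle does not degenerate into two disjoint arcs. I would check, one circle at a time, that Smith's reconnection pattern is the \emph{crossed} local pairing used to define $L_{q,\mathrm{PL}}$ and not the uncrossed one. This is done by computing the sign of the first-order perturbation term along the circle; the term is a function of the circle angle vanishing at two antipodal points, and those zeros give the reconnection. I would then assemble the local pieces with the straight segments. That yields an immersion $C^1$-close, outside small disks, to a PL curve with the same combinatorics, and a straight-line isotopy through immersions inside the disks gives the regular homotopy. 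I expect this to be the main obstacle. The crossing type has to be matched uniformly in $q$ and at both seams $\gamma=0,\pi$, and regular homotopy also requires matching the rotation number of each local piece, not just its endpoints. I would handle the rotation number by comparing tangent-winding counts of the two local models.

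For the auxiliary components I would use that they are compact immersed circles in $\pill^*$. Each one is either nullhomotopic in the punctured pillowcase or parallel to a component that bounds a disk with orbifold points. Either way, a small Hamiltonian isotopy of the complementary curve, realized by a small holonomy perturbation of the other tangle, can displace it from that curve. This works because the other curve meets a neighborhood of the auxiliary circle only in finitely many arcs, and these can be pushed off by the flexibility of generic holonomy perturbations \cite{HHK1}. If the complementary tangle is an earring $\Qh{r}$, the perturbation is supported away from the corner fold and the pinch. The figure-eight structure is then untouched, and the same displacement argument applies.
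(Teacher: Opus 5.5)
For the main component you are on the paper's route, which is essentially a chain of citations to Smith:
\begin{enumerate}
\item the rational lines of slopes $1/3$ and $1/q$ \cite[Prop.~2.27]{Smith};
\item the fact that finite-slope rational tangles form a good pair without preliminary perturbation (example following \cite[Thm.~3.26]{Smith});
\item the absence of corner circles by coprimality;
\item \cite[Thm.~4.22]{Smith}.
\end{enumerate}
Item (2) is a hypothesis you must check before applying Smith's structure theorems, and your sketch omits it.

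Item (4) already states that a neighborhood of each internal circle is replaced by two intervals joining \emph{opposite} abelian ends. That is the crossed pairing by definition, and the theorem carries its own regular-homotopy conclusion. The uniformity across seams that you flag as the main obstacle is handled by \cite[Prop.~4.23]{Smith}, the rotated-perturbation version. So your first-order sign computation and rotation-number bookkeeping are unnecessary.

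If you do carry them out by hand, replace the ``straight-line isotopy through immersions'' step. Linear interpolation between two immersed arcs in a disk can pass through non-immersions. What gives a regular homotopy rel endpoints is equality of rotation numbers (the Whitney--Graustein/Smale classification), which you mention only afterwards.

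The auxiliary-component part has a genuine gap. Your displacement argument uses no information about the size or position of the auxiliary circles, and without that information it is false. Suppose an auxiliary circle has nonzero geometric intersection number with the complementary curve in the punctured pillowcase, for instance a peripheral circle around a corner at which a rational arc ends. Then no isotopy at all removes the intersection. Even for a nullhomotopic circle, if the complementary curve enters the disk it bounds to depth greater than $\epsilon$, every $\epsilon$-small perturbation still crosses it. So your dichotomy ``nullhomotopic or parallel to a component bounding a disk with orbifold points'' does not yield displaceability; the second case is exactly the obstructed one. Likewise ``finitely many arcs can be pushed off'' does not follow.

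Two further steps are unsupported:
\begin{itemize}
\item that an arbitrary Hamiltonian isotopy of the image curve is realized by a small holonomy perturbation of the complementary tangle;
\item that in the earring case the perturbation can be kept away from the corner folds and the pinch, which presupposes that no auxiliary circle sits near them.
\end{itemize}

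The paper closes this part by citing Smith's auxiliary-component avoidance result \cite[Prop.~5.7]{Smith}. It provides an arbitrarily small perturbation of any fixed complementary one-manifold whose image misses every auxiliary circle, and it explicitly includes the earring case. You need that input, or an independent localization of the auxiliary circles, to prove the second assertion.
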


\begin{proof}
Smith's rational-tangle calculation identifies the two character varieties with straight intervals
of slopes $1/3$ and $1/q$ \cite[Prop.~2.27]{Smith}.  Rational tangles of finite slope form a good pair
without preliminary perturbations \cite[example following Thm.~3.26]{Smith}.  Because their
denominators $3$ and $q$ are coprime, the sum has no corner circles
\cite[discussion following Prop.~5.8]{Smith}; hence every seam-fiber circle is internal.

Smith's cross-resolution theorem replaces a neighborhood of each internal circle by two intervals
joining opposite abelian ends \cite[Thm.~4.22]{Smith}.  This is exactly the crossed local pairing in
the definition of $L_{q,\mathrm{PL}}$.  The theorem's regular-homotopy conclusion, together with its
rotated-perturbation version \cite[Prop.~4.23]{Smith}, therefore gives the first assertion.  Finally,
Smith's auxiliary-component avoidance result permits an arbitrarily small perturbation of any fixed
complementary one-manifold so that its image misses every auxiliary circle, and explicitly includes
the earring case \cite[Prop.~5.7]{Smith}.
\end{proof}

\subsection{The $Q_{1/3}$ correspondence is immersed but not embedded}
Write $P_1,P_2,P_3$ for the pillowcases of the three boundary spheres of Smith's $C_3$
configuration, $P^-$ for a pillowcase with the sign of its symplectic form reversed, $P^*$ for the
smooth locus, and $C_3\subset P_1^-\times P_2^-\times P_3$ for his three-sphere correspondence; the
superscript $\mathrm{reg}$ denotes its regular Lagrangian locus \cite[Cor.~4.11]{Smith}.

\begin{lemma}[Reduction to a correspondence between surfaces]
\label{lem:surface-reduction}
Let $A_{1/3}\iss P_1$ be the rational line for $Q_{1/3}$, and let
$C_{3,t}^{\mathrm{reg}}\iss P_1^-\times P_2^-\times P_3$ denote the regular Lagrangian locus of
Smith's perturbed $C_3$ correspondence.  Wherever the fiber product is transverse, set
\[
 F_{1/3,t}:=A_{1/3}\times_{P_1}C_{3,t}^{\mathrm{reg}}
       \iss P_2^-\times P_3.
\]
Then $F_{1/3,t}$ is an immersed Lagrangian surface.  On a torus-cover chart of the unperturbed main
stratum with $\gamma\notin\{0,\pi\}$, a branch of $A_{1/3}$ has
$\theta_1=\gamma/3+c$, and $F_{1/3,0}$ is the graph of the symplectic shear
\[
 (\gamma,\eta)\longmapsto
 (\gamma,\eta+\gamma/3+c).
\]
On every compact subset of such a chart, the projection of $F_{1/3,t}$ to $P_2$ remains a local
diffeomorphism for all sufficiently small $t>0$.  After composition with the rational line for
$Q_{1/7}$, the complement of these graph charts is contained in neighborhoods of the six internal
fiber circles used in Proposition~\ref{prop:smith-main}.  If the fiber product is transverse along
the entire regular locus, then the target projection $F_{1/3,t}\to P_3^*$ is proper.
\end{lemma}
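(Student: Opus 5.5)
The lemma bundles several claims, and I would prove them in the order stated.

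\emph{Immersed Lagrangian.} This is the standard composition of Lagrangian correspondences. The line $A_{1/3}\subset P_1$ is Lagrangian, and $C_{3,t}^{\mathrm{reg}}\subset P_1^-\times P_2^-\times P_3$ is Lagrangian (Smith, Cor.~4.11). Where the fiber product is transverse, the linear-algebra lemma applies: the reduction of a Lagrangian subspace by the coisotropic subspace $\Lambda\times T(P_2^-\times P_3)$ is Lagrangian. Hence the projection of $A_{1/3}\times_{P_1}C_{3,t}^{\mathrm{reg}}$ to $P_2^-\times P_3$ is an immersion with Lagrangian image. A dimension count gives $1+3-2=2$, so the image is a surface.

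\emph{The unperturbed chart.} Here I would use Smith's description of the unperturbed main stratum of $C_3$ over $\gamma\notin\{0,\pi\}$. There all three boundary holonomies share the common $\gamma$, and the $\theta$-coordinates add, which is the Conway-sum rule from the bullet in Section~\ref{sec:setup}. In torus-cover coordinates I would therefore write the stratum as $\{(\gamma,\theta_1),(\gamma,\eta),(\gamma,\eta+\theta_1)\}$. Substituting the branch $\theta_1=\gamma/3+c$ of $A_{1/3}$ then gives $F_{1/3,0}$ as the graph of $(\gamma,\eta)\mapsto(\gamma,\eta+\gamma/3+c)$.

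This map has Jacobian $\begin{pmatrix}1&0\\1/3&1\end{pmatrix}$, of determinant $1$, so it preserves $d\gamma\wedge d\eta$. This is consistent with the graph being Lagrangian in $P_2^-\times P_3$. Transversality of the fiber product holds on this chart because $A_{1/3}$ is transverse to the $\theta_1$-fibers, as its slope is finite.

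\emph{Local diffeomorphism for small $t$.} At $t=0$, the projection of the graph to $P_2$ is a diffeomorphism on the chart. Smith's perturbed correspondence depends smoothly on $t$ near the regular stratum. On a compact subset $K$, transversality is an open condition, so the perturbed fiber product is a $C^1$-small deformation of the graph. Since the derivative of the projection is invertible at $t=0$ uniformly on $K$, it stays invertible for $t$ below some $t_K>0$.

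\emph{Complement of the graph charts.} After composing with the $Q_{1/7}$ line, the points outside the graph charts lie over $\gamma\in\{0,\pi\}$. By Proposition~\ref{prop:smith-main} and coprimality, these produce no corner circles. So they are exactly the internal seam-fiber circles, and there are $q-1=6$ of them. Any point near a seam but outside their neighborhoods is abelian-ended and lies in a graph chart after shrinking.

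\emph{Properness.} Properness is the step I expect to be the main obstacle. The plan is to take a sequence in $F_{1/3,t}$ whose images in $P_3^*$ converge in $P_3^*$. The underlying points of $C_{3,t}$ lie in a compact space, so a subsequence converges, and I must show the limit stays in the regular locus with its $A_{1/3}$ component intact. Two ingredients would give this.

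First, $A_{1/3}$ is compact away from the corners. If the $P_1$-coordinate tended to a corner, the $P_3$-coordinate would be forced to a corner too, by the shear formula extended over the seams via Smith's local model. That contradicts convergence in $P_3^*$.

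Second, limits leaving the regular locus would have to be non-regular points of $C_{3,t}$, for instance reducibles. Under the hypothesis that transversality holds along the entire regular locus, I would argue from Smith's local structure of $C_{3,t}$ near its singular strata that those strata project into the corner set of $P_3$. This projection claim is the part needing careful citation of Smith.
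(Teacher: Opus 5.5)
Your first four steps match the paper's proof: the transverse-composition lemma for the Lagrangian claim, Smith's main-stratum coordinates $p_1=(\gamma,\beta)$, $p_2=(\gamma,\theta-\beta)$, $p_3=(\gamma,\theta)$ with $\eta=\theta-\beta$ for the shear, the implicit-function theorem on compact sets, and the coprime-denominator count of six internal circles.

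The gap is in properness. You leave that step open, and the one concrete argument you offer for it fails. You claim that if the $P_1$-coordinate tends to a corner, the shear formula extended over the seams forces the $P_3$-coordinate to a corner. It does the opposite. On the branch $\beta=\gamma/3$ with $\gamma\to0$, the shear gives $p_3=(\gamma,\eta+\gamma/3)\to(0,\eta)$ with $\eta$ free, which is a non-corner seam point of $P_3^*$. So the unperturbed local model exhibits exactly the mixed behaviour that properness must exclude: a $P_1$-corner lying over a smooth point of $P_3$. No argument that extends the $t=0$ picture over the seams can prove properness, because it is the perturbation that removes these strata.

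The missing input is Smith's structure theorem for the \emph{perturbed} correspondence (his Proposition~4.3). It says the boundary restriction of $\mathcal R_{D_t}(C_3)$ lies either in $P_1^*\times P_2^*\times P_3^*$ or in the product of the three corner strata, with no mixed strata. By his Corollary~4.11, the regular Lagrangian locus is exactly the first piece.

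This one dichotomy supplies both of your ingredients. An endpoint of $A_{1/3}$ at a corner of $P_1$ forces all three coordinates into corners, and a non-regular point lies over a corner of $P_3$. With it the sequence argument becomes unnecessary:
\begin{itemize}
\item $\overline F_{1/3,t}=\overline A_{1/3}\times_{P_1}\mathcal R_{D_t}(C_3)$ is closed in a compact space;
\item under the transversality hypothesis, every point of $\overline F_{1/3,t}\setminus F_{1/3,t}$ maps to a corner of $P_3$;
\item hence for compact $K\subset P_3^*$ the set $\overline p_3^{-1}(K)$ is compact and contained in $F_{1/3,t}$.
\end{itemize}
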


\begin{proof}
On the unperturbed main stratum, Smith's boundary-coordinate formulas give
\[
 p_1=(\gamma,\beta),\qquad
 p_2=(\gamma,\theta-\beta),\qquad
 p_3=(\gamma,\theta).
\]
Imposing $\beta=\gamma/3+c$ and writing $\eta=\theta-\beta$ gives the displayed graph.  Its pullback
preserves $d\gamma\wedge d\theta$, since
$d\gamma\wedge d(\eta+\gamma/3+c)=d\gamma\wedge d\eta$.  The standard transverse-composition
lemma therefore makes the fiber product Lagrangian.  The projection to $P_2$ is a local
diffeomorphism at $t=0$ away from the seams; this property persists on compact subsets by the
implicit-function theorem.  For $Q_{1/3}$ and $Q_{1/7}$, Smith's rational-line and coprime-denominator
analysis leaves six internal fiber circles, and \cite[Theorem~4.22]{Smith} replaces precisely their neighborhoods
by the crossed pieces used to construct $L_{7,\mathrm{PL}}$.

For properness, let $\overline A_{1/3}$ be the compact rational arc in the full pillowcase and set
\[
 \overline F_{1/3,t}:=
 \overline A_{1/3}\times_{P_1}\mathcal R_{D_t}(C_3).
\]
This is a closed subset of a compact space.  Smith proves that the boundary restriction of
$\mathcal R_{D_t}(C_3)$ lies in
$P_1^*\times P_2^*\times P_3^*$ or in the product of the three corner strata, with no mixed
strata \cite[Prop.~4.3]{Smith}; its regular Lagrangian locus is the former
\cite[Cor.~4.11]{Smith}.  Hence every point of
$\overline F_{1/3,t}\setminus F_{1/3,t}$ maps to a corner of $P_3$.  If $K\subset P_3^*$ is compact,
its inverse image is therefore the compact set
$\overline p_3^{-1}(K)\subset\overline F_{1/3,t}$ and is contained in $F_{1/3,t}$.  This is the
definition of properness.
\end{proof}

\begin{proposition}[A triple-point obstruction]\label{prop:partial-triple}
For every sufficiently small $t>0$, the partial fiber product in
Lemma~\ref{lem:surface-reduction} has three distinct transverse points whose images in
$P_2^*\times P_3^*$ are all
\[
 \bigl((0,\pi/2),(0,\pi/2)\bigr).
\]
Consequently, $F_{1/3,t}\to P_2^-\times P_3$ is not embedded.
\end{proposition}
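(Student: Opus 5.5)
The plan is to work in Smith's coordinates on the perturbed reducible locus of $C_3$ over the seam $\gamma=0$ and to locate the three points explicitly at $t=0$. I would then continue them to small $t>0$ with the implicit function theorem.

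First I would recall, from Smith's description of $\mathcal R_{D_t}(C_3)$ near the reducible circle over $\gamma=0$, the local parametrization of the regular locus. It is given by an angular coordinate $\varphi$ on the reducible circle and a transverse coordinate $u$. In these coordinates the boundary maps take the form $p_1=(0,\varphi+\dots)$, $p_2=(0,\dots)$, $p_3=(0,\dots)$ to first order in $t$. Imposing that $p_1$ lies on the rational line $A_{1/3}$ then becomes, after unwinding the lift of slope $1/3$ through the corner sheets, a condition of the form $3\varphi+4u\in 2\pi\Z$, as announced in the introduction. Fixing the target $\bigl((0,\pi/2),(0,\pi/2)\bigr)$ pins down the value of $u$ (or the combination determining $p_2,p_3$). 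The equation $3\varphi\equiv -4u \pmod{2\pi}$ then has exactly three solutions $\varphi_k=\varphi_0+2\pi k/3$, $k=0,1,2$, on the circle. These are three distinct points of the fiber product at $t=0$ (or in the limit $t\to0^+$ along the resolved family), all with the same image in $P_2^*\times P_3^*$.

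Second, I would check transversality at each of the three points. The derivative of $3\varphi+4u$ in $\varphi$ is $3\neq0$, so the fiber product $A_{1/3}\times_{P_1}C_{3,t}^{\mathrm{reg}}$ is cut out transversally there. I would also check that the map $(\varphi,u)\mapsto(p_2,p_3)$ restricted to each branch is an immersion near $\varphi_k$, so each point is a transverse double/triple crossing rather than a tangency. The implicit function theorem, applied to the system ``fiber product condition plus image equal to $((0,\pi/2),(0,\pi/2))$'', then yields three smooth families of solutions for all sufficiently small $t>0$. Symmetry may even keep the common image exactly at $((0,\pi/2),(0,\pi/2))$: the target lies on the seam and is fixed by the relevant involution/rotation of the perturbation. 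Otherwise I would solve for the image as part of the IFT system and verify that it stays at that point by the $\Z/3$-equivariance $\varphi\mapsto\varphi+2\pi/3$ of the leading-order equations. Distinctness persists by continuity, since the $\varphi_k$ are separated. Non-embeddedness follows immediately, since three distinct points of $F_{1/3,t}$ share an image.

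The main obstacle is the first step: extracting from Smith's perturbed $C_3$ formulas the exact form of the incidence condition $3\varphi+4u\in2\pi\Z$. The difficulty is pinning down the coefficient and the normalization so that the prescribed point $(0,\pi/2)$ is actually hit by all three branches, rather than by a $t$-dependent shift of it. I would first try to compute at leading order in $t$ using the explicit holonomy perturbation. Then I would use the $\Z/3$ deck symmetry of tripling, together with the $\iota$-symmetry fixing $(0,\pi/2)$, to argue that the common image is exact and not merely approximate.
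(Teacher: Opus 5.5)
\paragraph{Verdict: genuine gap.} Your mechanism is the right one: three solutions of $3\varphi+4u\in2\pi\Z$ on the circle over the seam. But the step you flag as the main obstacle, namely that all three branches hit \emph{exactly} $((0,\pi/2),(0,\pi/2))$, is a genuine gap, and the tools you propose cannot close it.

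An exact triple point of an immersed surface in the four-manifold $P_2^-\times P_3$ is a non-transverse phenomenon. Three points of the surface give six parameters against eight coincidence equations. So it cannot be obtained by continuing a leading-order picture with the implicit function theorem. The system you propose to feed to the IFT is overdetermined: incidence with $A_{1/3}$ plus ``image equals the target'' is five equations on the three-dimensional $C_{3,t}$, and it stays overdetermined if the image is left as an unknown.

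The symmetry fallback also fails, because the target does not pin $u$. In the actual families ($\varphi=\pi/2\mp\alpha$, $u=\pm t\sin\alpha$ for $\beta=0,\pi$) one has $u=t\cos\varphi$ on both branches. To first order the three roots are therefore $\varphi\approx-4t/3$, $2\pi/3+2t/3$ and $4\pi/3+2t/3$, with one on the $\beta=0$ branch and two on the $\beta=\pi$ branch. They are not an orbit of $\varphi\mapsto\varphi+2\pi/3$, so no $\Z/3$-equivariance argument is available.

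\paragraph{The missing idea.} What is missing is an exact description of the fiber of $\mathcal R_{D_t}(C_3)$ over the target, valid for each fixed $t>0$ and not just to first order. The paper proceeds as follows.
\begin{itemize}
\item It fixes $\gamma=0$ and $\theta=\pi/2$, so $(a,b,c,d)=(\mathbf i,\mathbf i,\mathbf j,\mathbf j)$ and $p_3=(0,\pi/2)$ by construction.
\item Along the two families it takes $x=v(\varphi)$ and $p=e^{-u\mathbf j}$. Quaternion multiplication shows that $y=xp^{-1}a^{-1}pb$ is purely imaginary identically, so Smith's equation $\operatorname{Re}(y)=0$ holds along the whole curve.
\item The second-boundary words simplify to $(v(\varphi+2u),v(\varphi+2u),\mathbf j,\mathbf j)$, so $p_2=(0,\pi/2)$ identically.
\end{itemize}
The common image is then exact for free. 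Only the scalar equation $3\varphi+4u\in2\pi\Z$ in the single family parameter $\alpha$ remains, and the IFT is applied to that equation (derivatives $-3,3,3$ at $t=0$), not to the fiber product.

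\paragraph{Two further corrections.} First, your $t=0$ points are not points of $F_{1/3,0}$. They lie on the unperturbed reducible circle, and the one at $\varphi=0$ sits over the corner $(0,0)$ of $P_1$. For $t>0$ you must therefore check separately that $|\gamma_1|=2t\sin\alpha>0$, so no point lies over a corner. Second, you must check that the points are regular points of $\mathcal R_{D_t}(C_3)$. This is $\partial_\beta\operatorname{Re}(y)=-2t\sin^2\alpha+O(t^2)\neq0$, an order-$t$ effect that degenerates at $t=0$. Your transversality check via $\partial_\varphi(3\varphi+4u)=3$ alone does not see it.
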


\begin{proof}
Work in the unit quaternions and put
\[
 v(\varphi)=\cos\varphi\,\mathbf i+\sin\varphi\,\mathbf k.
\]
Fix $\gamma=0$ and $\theta=\pi/2$ in Smith's parametrization, so that
$a=b=\mathbf i$ and $c=d=\mathbf j$.  For $\beta=0$ or $\pi$, write
\[
 (\varphi,u)=
 \begin{cases}
  (\pi/2-\alpha,\ t\sin\alpha),&\beta=0,\\
  (\pi/2+\alpha,-t\sin\alpha),&\beta=\pi.
 \end{cases}
\]
Then $x=v(\varphi)$ and the perturbation element is $p=e^{-u\mathbf j}$.  Direct quaternion
multiplication shows that $y=xp^{-1}a^{-1}pb$ is purely imaginary.  Thus Smith's defining equation
$\operatorname{Re}(y)=0$ holds identically along these two one-parameter families.

On the torus cover, the first boundary value has coordinates
\[
 (\gamma_1,\theta_1)=(2u,\varphi+2u).
\]
The equation for the rational line $A_{1/3}$ is therefore
$3\varphi+4u\in2\pi\Z$.  It gives the following three equations near the indicated roots:
\begin{align*}
 3(\pi/2-\alpha)+4t\sin\alpha&=0,
     &\alpha&\longrightarrow\pi/2,\quad \beta=0,\\
 3(\pi/2+\alpha)-4t\sin\alpha&=2\pi,
     &\alpha&\longrightarrow\pi/6,\quad \beta=\pi,\\
 3(\pi/2+\alpha)-4t\sin\alpha&=4\pi,
     &\alpha&\longrightarrow5\pi/6,\quad \beta=\pi.
\end{align*}
At $t=0$ the three derivatives with respect to $\alpha$ are $-3,3,3$.  The implicit-function
theorem consequently gives three solutions for all sufficiently small $t>0$.  They are distinct
because their limiting $x$-values are distinct.  Moreover, their first-boundary $\gamma$-coordinate
has absolute value $2t\sin\alpha>0$, so none lies over a corner of $P_1$.

The second-boundary words simplify, using $xp=p^{-1}x$, to
\[
 (a_2,b_2,c_2,d_2)
 =\bigl(v(\varphi+2u),v(\varphi+2u),\mathbf j,\mathbf j\bigr).
\]
Hence $p_2=(0,\pi/2)$ at all three points, while
$(a,b,c,d)=(\mathbf i,\mathbf i,\mathbf j,\mathbf j)$ gives
$p_3=(0,\pi/2)$.  Finally, differentiation of the two local defining equations gives
$\partial_\beta\operatorname{Re}(y)=-2t\sin^2\alpha+O(t^2)$ and
$\partial_\alpha(3\theta_1-\gamma_1)=\pm3+O(t)$.  Thus the fiber product is transverse at all
three points for small $t>0$.
\end{proof}

\begin{figure}[ht]
\centering
\begin{tikzpicture}[scale=1.35]
\draw[thick] (0,0) rectangle (3.14159,6.28319);
\foreach \y in {0,3.14159,6.28319}{\fill (-0.035,\y-0.035) rectangle (0.035,\y+0.035); \fill (3.14159-0.035,\y-0.035) rectangle (3.14159+0.035,\y+0.035);}
\node[below] at (0,0) {\footnotesize $(0,0)$};
\node[below] at (3.1416,0) {\footnotesize $(\pi,0)$};
\node[left] at (0,6.2832) {\footnotesize $(0,2\pi)$};
\node[right] at (3.1416,6.2832) {\footnotesize $(\pi,2\pi)$};
\draw[->] (-0.15,0)--(3.4916,0) node[right] {\footnotesize $\gamma$};
\draw[->] (0,-0.15)--(0,6.6832) node[above] {\footnotesize $\theta$};
\draw[blue,line width=0.55pt] (0.0000,5.4454) -- (0.1571,5.5292) -- (0.3142,5.6130) -- (0.4712,5.6968) -- (0.6283,5.7805) -- (0.7854,5.8643) -- (0.9425,5.9481) -- (1.0996,6.0319) -- (1.2566,6.1156) -- (1.4137,6.1994) -- (1.4923,6.2413);
\draw[blue,line width=0.55pt] (1.5708,0.0000) -- (1.7279,0.0838) -- (1.8850,0.1676) -- (2.0420,0.2513) -- (2.1991,0.3351) -- (2.3562,0.4189) -- (2.5133,0.5027) -- (2.6704,0.5864) -- (2.8274,0.6702) -- (2.9845,0.7540) -- (3.0631,0.7959);
\draw[blue,line width=0.55pt] (3.1336,0.8836) -- (3.1150,0.9948) -- (3.1000,1.1126) -- (3.0923,1.2305) -- (3.0926,1.3352) -- (3.0974,1.4137) -- (3.1056,1.4923) -- (3.1186,1.5839) -- (3.1411,1.7148);
\draw[blue,line width=0.55pt] (3.1402,4.5553) -- (3.1221,4.4113) -- (3.1092,4.2935) -- (3.1012,4.2019) -- (3.0937,4.0710) -- (3.0916,3.9532) -- (3.0947,3.8223) -- (3.1030,3.6914) -- (3.1173,3.5474) -- (3.1341,3.4099) -- (3.0631,3.3091) -- (2.9060,3.2254) -- (2.7489,3.1416) -- (2.5918,3.0578) -- (2.4347,2.9740) -- (2.2777,2.8903) -- (2.1206,2.8065) -- (1.9635,2.7227) -- (1.8064,2.6389) -- (1.6493,2.5552) -- (1.4923,2.4714) -- (1.3352,2.3876) -- (1.1781,2.3038) -- (1.0210,2.2201) -- (0.8639,2.1363) -- (0.7069,2.0525) -- (0.5498,1.9687) -- (0.3927,1.8850) -- (0.2356,1.8012) -- (0.0785,1.7174) -- (0.0135,1.6205) -- (0.0299,1.5420) -- (0.0392,1.4713) -- (0.0418,1.4085) -- (0.0394,1.3535) -- (0.0339,1.3064) -- (0.0258,1.2593) -- (0.0004,1.1493);
\draw[blue,line width=0.55pt] (0.0011,5.1417) -- (0.0163,5.2281) -- (0.0338,5.3538) -- (0.0411,5.4559) -- (0.0412,5.5423) -- (0.0363,5.6208) -- (0.0273,5.6994) -- (0.0126,5.7936);
\draw[blue,line width=0.55pt] (0.0785,5.9062) -- (0.2356,5.9900) -- (0.3927,6.0737) -- (0.5498,6.1575) -- (0.7069,6.2413);
\draw[blue,line width=0.55pt] (0.7854,0.0000) -- (0.9425,0.0838) -- (1.0996,0.1676) -- (1.2566,0.2513) -- (1.4137,0.3351) -- (1.5708,0.4189) -- (1.7279,0.5027) -- (1.8850,0.5864) -- (2.0420,0.6702) -- (2.1991,0.7540) -- (2.3562,0.8378) -- (2.5133,0.9215) -- (2.6704,1.0053) -- (2.8274,1.0891) -- (2.9845,1.1729) -- (3.0631,1.2147);
\draw[blue,line width=0.55pt] (3.1416,5.0265) -- (2.9845,4.9428) -- (2.8274,4.8590) -- (2.6704,4.7752) -- (2.5133,4.6914) -- (2.3562,4.6077) -- (2.1991,4.5239) -- (2.0420,4.4401) -- (1.8850,4.3563) -- (1.7279,4.2726) -- (1.5708,4.1888) -- (1.4137,4.1050) -- (1.2566,4.0212) -- (1.0996,3.9375) -- (0.9425,3.8537) -- (0.7854,3.7699) -- (0.6283,3.6861) -- (0.4712,3.6024) -- (0.3142,3.5186) -- (0.1571,3.4348) -- (0.0785,3.3929);
\draw[blue,line width=0.55pt] (0.0075,3.4099) -- (0.0243,3.5474) -- (0.0385,3.6914) -- (0.0469,3.8223) -- (0.0500,3.9532) -- (0.0479,4.0710) -- (0.0404,4.2019) -- (0.0324,4.2935) -- (0.0194,4.4113) -- (0.0014,4.5553);
\draw[blue,line width=0.55pt] (0.0005,1.7148) -- (0.0229,1.5839) -- (0.0360,1.4923) -- (0.0442,1.4137) -- (0.0489,1.3352) -- (0.0493,1.2305) -- (0.0416,1.1126) -- (0.0266,0.9948) -- (0.0080,0.8836);
\draw[blue,line width=0.55pt] (0.0785,0.8796) -- (0.2356,0.9634) -- (0.3927,1.0472) -- (0.5498,1.1310) -- (0.7069,1.2147) -- (0.8639,1.2985) -- (1.0210,1.3823) -- (1.1781,1.4661) -- (1.3352,1.5499) -- (1.4923,1.6336) -- (1.6493,1.7174) -- (1.8064,1.8012) -- (1.9635,1.8850) -- (2.1206,1.9687) -- (2.2777,2.0525) -- (2.4347,2.1363) -- (2.5918,2.2201) -- (2.7489,2.3038) -- (2.9060,2.3876) -- (3.0631,2.4714);
\draw[blue,line width=0.55pt] (3.1416,3.7699) -- (2.9845,3.6861) -- (2.8274,3.6024) -- (2.6704,3.5186) -- (2.5133,3.4348) -- (2.3562,3.3510) -- (2.1991,3.2673) -- (2.0420,3.1835) -- (1.8850,3.0997) -- (1.7279,3.0159) -- (1.5708,2.9322) -- (1.4137,2.8484) -- (1.2566,2.7646) -- (1.0996,2.6808) -- (0.9425,2.5970) -- (0.7854,2.5133) -- (0.6283,2.4295) -- (0.4712,2.3457) -- (0.3142,2.2619) -- (0.1571,2.1782) -- (0.0785,2.1363);
\draw[blue,line width=0.55pt] (0.0000,4.1888) -- (0.1571,4.2726) -- (0.3142,4.3563) -- (0.4712,4.4401) -- (0.6283,4.5239) -- (0.7854,4.6077) -- (0.9425,4.6914) -- (1.0996,4.7752) -- (1.2566,4.8590) -- (1.4137,4.9428) -- (1.5708,5.0265) -- (1.7279,5.1103) -- (1.8850,5.1941) -- (2.0420,5.2779) -- (2.1991,5.3617) -- (2.3562,5.4454) -- (2.5133,5.5292) -- (2.6704,5.6130) -- (2.8274,5.6968) -- (2.9845,5.7805) -- (3.0631,5.8224);
\draw[blue,line width=0.55pt] (3.1258,5.7936) -- (3.1072,5.6994) -- (3.0959,5.6208) -- (3.0898,5.5423) -- (3.0898,5.4559) -- (3.0936,5.4009) -- (3.1075,5.2988) -- (3.1211,5.2281) -- (3.1402,5.1417);
\draw[blue,line width=0.55pt] (3.1411,1.1493) -- (3.1174,1.2278) -- (3.1036,1.2828) -- (3.0950,1.3299) -- (3.0900,1.3771) -- (3.0896,1.4399) -- (3.0941,1.4870) -- (3.1040,1.5420) -- (3.1247,1.6205) -- (3.0631,1.6336) -- (2.9060,1.5499) -- (2.7489,1.4661) -- (2.5918,1.3823) -- (2.4347,1.2985) -- (2.2777,1.2147) -- (2.1206,1.1310) -- (1.9635,1.0472) -- (1.8064,0.9634) -- (1.6493,0.8796) -- (1.4923,0.7959) -- (1.3352,0.7121) -- (1.1781,0.6283) -- (1.0210,0.5445) -- (0.8639,0.4608) -- (0.7069,0.3770) -- (0.5498,0.2932) -- (0.3927,0.2094) -- (0.2356,0.1257) -- (0.0785,0.0419) -- (0.0785,0.0419) -- (0.2356,0.1257) -- (0.3927,0.2094) -- (0.5498,0.2932) -- (0.7069,0.3770) -- (0.8639,0.4608) -- (1.0210,0.5445) -- (1.1781,0.6283) -- (1.3352,0.7121) -- (1.4923,0.7959) -- (1.6493,0.8796) -- (1.8064,0.9634) -- (1.9635,1.0472) -- (2.1206,1.1310) -- (2.2777,1.2147) -- (2.4347,1.2985) -- (2.5918,1.3823) -- (2.7489,1.4661) -- (2.9060,1.5499) -- (3.0631,1.6336);
\draw[blue,line width=0.55pt] (3.1247,1.6205) -- (3.1040,1.5420) -- (3.0941,1.4870) -- (3.0896,1.4399) -- (3.0900,1.3771) -- (3.0950,1.3299) -- (3.1036,1.2828) -- (3.1174,1.2278) -- (3.1411,1.1493);
\draw[blue,line width=0.55pt] (3.1402,5.1417) -- (3.1211,5.2281) -- (3.1075,5.2988) -- (3.0936,5.4009) -- (3.0898,5.4559) -- (3.0898,5.5423) -- (3.0959,5.6208) -- (3.1072,5.6994) -- (3.1258,5.7936) -- (3.0631,5.8224) -- (2.9060,5.7386) -- (2.7489,5.6549) -- (2.5918,5.5711) -- (2.4347,5.4873) -- (2.2777,5.4035) -- (2.1206,5.3198) -- (1.9635,5.2360) -- (1.8064,5.1522) -- (1.6493,5.0684) -- (1.4923,4.9847) -- (1.3352,4.9009) -- (1.1781,4.8171) -- (1.0210,4.7333) -- (0.8639,4.6496) -- (0.7069,4.5658) -- (0.5498,4.4820) -- (0.3927,4.3982) -- (0.2356,4.3145) -- (0.0785,4.2307);
\draw[blue,line width=0.55pt] (0.0000,2.0944) -- (0.1571,2.1782) -- (0.3142,2.2619) -- (0.4712,2.3457) -- (0.6283,2.4295) -- (0.7854,2.5133) -- (0.9425,2.5970) -- (1.0996,2.6808) -- (1.2566,2.7646) -- (1.4137,2.8484) -- (1.5708,2.9322) -- (1.7279,3.0159) -- (1.8850,3.0997) -- (2.0420,3.1835) -- (2.1991,3.2673) -- (2.3562,3.3510) -- (2.5133,3.4348) -- (2.6704,3.5186) -- (2.8274,3.6024) -- (2.9845,3.6861) -- (3.0631,3.7280);
\draw[blue,line width=0.55pt] (3.1416,2.5133) -- (2.9845,2.4295) -- (2.8274,2.3457) -- (2.6704,2.2619) -- (2.5133,2.1782) -- (2.3562,2.0944) -- (2.1991,2.0106) -- (2.0420,1.9268) -- (1.8850,1.8431) -- (1.7279,1.7593) -- (1.5708,1.6755) -- (1.4137,1.5917) -- (1.2566,1.5080) -- (1.0996,1.4242) -- (0.9425,1.3404) -- (0.7854,1.2566) -- (0.6283,1.1729) -- (0.4712,1.0891) -- (0.3142,1.0053) -- (0.1571,0.9215) -- (0.0785,0.8796);
\draw[blue,line width=0.55pt] (0.0080,0.8836) -- (0.0266,0.9948) -- (0.0416,1.1126) -- (0.0493,1.2305) -- (0.0489,1.3352) -- (0.0442,1.4137) -- (0.0360,1.4923) -- (0.0229,1.5839) -- (0.0005,1.7148);
\draw[blue,line width=0.55pt] (0.0014,4.5553) -- (0.0194,4.4113) -- (0.0324,4.2935) -- (0.0404,4.2019) -- (0.0479,4.0710) -- (0.0500,3.9532) -- (0.0469,3.8223) -- (0.0385,3.6914) -- (0.0243,3.5474) -- (0.0075,3.4099) -- (0.0785,3.3929) -- (0.2356,3.4767) -- (0.3927,3.5605) -- (0.5498,3.6442) -- (0.7069,3.7280) -- (0.8639,3.8118) -- (1.0210,3.8956) -- (1.1781,3.9794) -- (1.3352,4.0631) -- (1.4923,4.1469) -- (1.6493,4.2307) -- (1.8064,4.3145) -- (1.9635,4.3982) -- (2.1206,4.4820) -- (2.2777,4.5658) -- (2.4347,4.6496) -- (2.5918,4.7333) -- (2.7489,4.8171) -- (2.9060,4.9009) -- (3.0631,4.9847);
\draw[blue,line width=0.55pt] (3.1416,1.2566) -- (2.9845,1.1729) -- (2.8274,1.0891) -- (2.6704,1.0053) -- (2.5133,0.9215) -- (2.3562,0.8378) -- (2.1991,0.7540) -- (2.0420,0.6702) -- (1.8850,0.5864) -- (1.7279,0.5027) -- (1.5708,0.4189) -- (1.4137,0.3351) -- (1.2566,0.2513) -- (1.0996,0.1676) -- (0.9425,0.0838) -- (0.7854,0.0000);
\draw[blue,line width=0.55pt] (0.7069,6.2413) -- (0.5498,6.1575) -- (0.3927,6.0737) -- (0.2356,5.9900) -- (0.0785,5.9062);
\draw[blue,line width=0.55pt] (0.0126,5.7936) -- (0.0273,5.6994) -- (0.0363,5.6208) -- (0.0412,5.5423) -- (0.0411,5.4559) -- (0.0338,5.3538) -- (0.0163,5.2281) -- (0.0011,5.1417);
\draw[blue,line width=0.55pt] (0.0004,1.1493) -- (0.0258,1.2593) -- (0.0339,1.3064) -- (0.0394,1.3535) -- (0.0418,1.4085) -- (0.0392,1.4713) -- (0.0299,1.5420) -- (0.0135,1.6205) -- (0.0785,1.7174) -- (0.2356,1.8012) -- (0.3927,1.8850) -- (0.5498,1.9687) -- (0.7069,2.0525) -- (0.8639,2.1363) -- (1.0210,2.2201) -- (1.1781,2.3038) -- (1.3352,2.3876) -- (1.4923,2.4714) -- (1.6493,2.5552) -- (1.8064,2.6389) -- (1.9635,2.7227) -- (2.1206,2.8065) -- (2.2777,2.8903) -- (2.4347,2.9740) -- (2.5918,3.0578) -- (2.7489,3.1416) -- (2.9060,3.2254) -- (3.0631,3.3091);
\draw[blue,line width=0.55pt] (3.1341,3.4099) -- (3.1173,3.5474) -- (3.1030,3.6914) -- (3.0947,3.8223) -- (3.0916,3.9532) -- (3.0937,4.0710) -- (3.1012,4.2019) -- (3.1092,4.2935) -- (3.1221,4.4113) -- (3.1402,4.5553);
\draw[blue,line width=0.55pt] (3.1411,1.7148) -- (3.1186,1.5839) -- (3.1056,1.4923) -- (3.0974,1.4137) -- (3.0926,1.3352) -- (3.0923,1.2305) -- (3.1000,1.1126) -- (3.1150,0.9948) -- (3.1336,0.8836) -- (3.0631,0.7959) -- (2.9060,0.7121) -- (2.7489,0.6283) -- (2.5918,0.5445) -- (2.4347,0.4608) -- (2.2777,0.3770) -- (2.1206,0.2932) -- (1.9635,0.2094) -- (1.8064,0.1257) -- (1.6493,0.0419) -- (1.5708,0.0000);
\draw[blue,line width=0.55pt] (1.4923,6.2413) -- (1.3352,6.1575) -- (1.1781,6.0737) -- (1.0210,5.9900) -- (0.8639,5.9062) -- (0.7069,5.8224) -- (0.5498,5.7386) -- (0.3927,5.6549) -- (0.2356,5.5711) -- (0.0785,5.4873);
\draw[blue,line width=0.55pt] (0.0050,5.3800) -- (0.0151,5.2491) -- (0.0239,5.1051) -- (0.0291,4.9742) -- (0.0310,4.8433) -- (0.0293,4.7189) -- (0.0251,4.5946) -- (0.0201,4.5029) -- (0.0121,4.3851) -- (0.0008,4.2412);
\draw[blue,line width=0.55pt] (0.0067,2.1140) -- (0.0183,2.2318) -- (0.0274,2.3562) -- (0.0310,2.4871) -- (0.0291,2.5918) -- (0.0221,2.7096) -- (0.0100,2.8405) -- (0.0000,2.9322) -- (0.1571,3.0159) -- (0.3142,3.0997) -- (0.4712,3.1835) -- (0.6283,3.2673) -- (0.7854,3.3510) -- (0.9425,3.4348) -- (1.0996,3.5186) -- (1.2566,3.6024) -- (1.4137,3.6861) -- (1.5708,3.7699) -- (1.7279,3.8537) -- (1.8850,3.9375) -- (2.0420,4.0212) -- (2.1991,4.1050) -- (2.3562,4.1888) -- (2.5133,4.2726) -- (2.6704,4.3563) -- (2.8274,4.4401) -- (2.9845,4.5239) -- (3.0631,4.5658);
\draw[blue,line width=0.55pt] (3.1161,4.6784) -- (3.0861,4.7726) -- (3.0679,4.8511) -- (3.0580,4.9297) -- (3.0569,4.9925) -- (3.0602,5.0396) -- (3.0682,5.0946) -- (3.0783,5.1417) -- (3.0958,5.2046) -- (3.1239,5.2870) -- (3.1393,5.3302);
\draw[blue,line width=0.55pt] (3.1408,0.9451) -- (3.1026,0.8666) -- (3.0804,0.8116) -- (3.0664,0.7645) -- (3.0584,0.7173) -- (3.0566,0.6859) -- (3.0578,0.6545) -- (3.0619,0.6231) -- (3.0709,0.5838) -- (3.0868,0.5367) -- (3.1143,0.4739) -- (3.0631,0.3770) -- (2.9060,0.2932) -- (2.7489,0.2094) -- (2.5918,0.1257) -- (2.4347,0.0419) -- (2.3562,0.0000);
\draw[blue,line width=0.55pt] (2.2777,6.2413) -- (2.1206,6.1575) -- (1.9635,6.0737) -- (1.8064,5.9900) -- (1.6493,5.9062) -- (1.4923,5.8224) -- (1.3352,5.7386) -- (1.1781,5.6549) -- (1.0210,5.5711) -- (0.8639,5.4873) -- (0.7069,5.4035) -- (0.5498,5.3198) -- (0.3927,5.2360) -- (0.2356,5.1522) -- (0.0785,5.0684);
\draw[blue,line width=0.55pt] (0.0000,1.2566) -- (0.1571,1.3404) -- (0.3142,1.4242) -- (0.4712,1.5080) -- (0.6283,1.5917) -- (0.7854,1.6755) -- (0.9425,1.7593) -- (1.0996,1.8431) -- (1.2566,1.9268) -- (1.4137,2.0106) -- (1.5708,2.0944) -- (1.7279,2.1782) -- (1.8850,2.2619) -- (2.0420,2.3457) -- (2.1991,2.4295) -- (2.3562,2.5133) -- (2.5133,2.5970) -- (2.6704,2.6808) -- (2.8274,2.7646) -- (2.9845,2.8484) -- (3.0631,2.8903);
\draw[blue,line width=0.55pt] (3.1366,2.8863) -- (3.1251,2.7751) -- (3.1158,2.6573) -- (3.1110,2.5395) -- (3.1112,2.4347) -- (3.1193,2.2777) -- (3.1274,2.1860) -- (3.1349,2.1140);
\draw[blue,line width=0.55pt] (3.1408,4.2412) -- (3.1295,4.3851) -- (3.1215,4.5029) -- (3.1165,4.5946) -- (3.1123,4.7189) -- (3.1106,4.8433) -- (3.1125,4.9742) -- (3.1177,5.1051) -- (3.1265,5.2491) -- (3.1366,5.3800) -- (3.0631,5.4035) -- (2.9060,5.3198) -- (2.7489,5.2360) -- (2.5918,5.1522) -- (2.4347,5.0684) -- (2.2777,4.9847) -- (2.1206,4.9009) -- (1.9635,4.8171) -- (1.8064,4.7333) -- (1.6493,4.6496) -- (1.4923,4.5658) -- (1.3352,4.4820) -- (1.1781,4.3982) -- (1.0210,4.3145) -- (0.8639,4.2307) -- (0.7069,4.1469) -- (0.5498,4.0631) -- (0.3927,3.9794) -- (0.2356,3.8956) -- (0.0785,3.8118);
\draw[blue,line width=0.55pt] (0.0000,2.5133) -- (0.1571,2.5970) -- (0.3142,2.6808) -- (0.4712,2.7646) -- (0.6283,2.8484) -- (0.7854,2.9322) -- (0.9425,3.0159) -- (1.0996,3.0997) -- (1.2566,3.1835) -- (1.4137,3.2673) -- (1.5708,3.3510) -- (1.7279,3.4348) -- (1.8850,3.5186) -- (2.0420,3.6024) -- (2.1991,3.6861) -- (2.3562,3.7699) -- (2.5133,3.8537) -- (2.6704,3.9375) -- (2.8274,4.0212) -- (2.9845,4.1050) -- (3.0631,4.1469);
\draw[blue,line width=0.55pt] (3.1416,2.0944) -- (2.9845,2.0106) -- (2.8274,1.9268) -- (2.6704,1.8431) -- (2.5133,1.7593) -- (2.3562,1.6755) -- (2.1991,1.5917) -- (2.0420,1.5080) -- (1.8850,1.4242) -- (1.7279,1.3404) -- (1.5708,1.2566) -- (1.4137,1.1729) -- (1.2566,1.0891) -- (1.0996,1.0053) -- (0.9425,0.9215) -- (0.7854,0.8378) -- (0.6283,0.7540) -- (0.4712,0.6702) -- (0.3142,0.5864) -- (0.1571,0.5027) -- (0.0785,0.4608);
\draw[blue,line width=0.55pt] (0.0217,0.4739) -- (0.0482,0.5524) -- (0.0608,0.6074) -- (0.0665,0.6545) -- (0.0661,0.7173) -- (0.0597,0.7645) -- (0.0486,0.8116) -- (0.0310,0.8666) -- (0.0006,0.9451);
\draw[blue,line width=0.55pt] (0.0018,5.3302) -- (0.0263,5.2438) -- (0.0437,5.1732) -- (0.0546,5.1182) -- (0.0647,5.0396) -- (0.0672,4.9925) -- (0.0664,4.9297) -- (0.0585,4.8511) -- (0.0440,4.7726) -- (0.0202,4.6784) -- (0.0785,4.6496) -- (0.2356,4.7333) -- (0.3927,4.8171) -- (0.5498,4.9009) -- (0.7069,4.9847) -- (0.8639,5.0684) -- (1.0210,5.1522) -- (1.1781,5.2360) -- (1.3352,5.3198) -- (1.4923,5.4035) -- (1.6493,5.4873) -- (1.8064,5.5711) -- (1.9635,5.6549) -- (2.1206,5.7386) -- (2.2777,5.8224) -- (2.4347,5.9062) -- (2.5918,5.9900) -- (2.7489,6.0737) -- (2.9060,6.1575) -- (3.0631,6.2413) -- (3.0631,6.2413) -- (2.9060,6.1575) -- (2.7489,6.0737) -- (2.5918,5.9900) -- (2.4347,5.9062) -- (2.2777,5.8224) -- (2.1206,5.7386) -- (1.9635,5.6549) -- (1.8064,5.5711) -- (1.6493,5.4873) -- (1.4923,5.4035) -- (1.3352,5.3198) -- (1.1781,5.2360) -- (1.0210,5.1522) -- (0.8639,5.0684) -- (0.7069,4.9847) -- (0.5498,4.9009) -- (0.3927,4.8171) -- (0.2356,4.7333) -- (0.0785,4.6496);
\draw[blue,line width=0.55pt] (0.0202,4.6784) -- (0.0440,4.7726) -- (0.0585,4.8511) -- (0.0664,4.9297) -- (0.0672,4.9925) -- (0.0647,5.0396) -- (0.0546,5.1182) -- (0.0437,5.1732) -- (0.0263,5.2438) -- (0.0018,5.3302);
\draw[blue,line width=0.55pt] (0.0006,0.9451) -- (0.0310,0.8666) -- (0.0486,0.8116) -- (0.0597,0.7645) -- (0.0661,0.7173) -- (0.0665,0.6545) -- (0.0608,0.6074) -- (0.0482,0.5524) -- (0.0217,0.4739) -- (0.0785,0.4608) -- (0.2356,0.5445) -- (0.3927,0.6283) -- (0.5498,0.7121) -- (0.7069,0.7959) -- (0.8639,0.8796) -- (1.0210,0.9634) -- (1.1781,1.0472) -- (1.3352,1.1310) -- (1.4923,1.2147) -- (1.6493,1.2985) -- (1.8064,1.3823) -- (1.9635,1.4661) -- (2.1206,1.5499) -- (2.2777,1.6336) -- (2.4347,1.7174) -- (2.5918,1.8012) -- (2.7489,1.8850) -- (2.9060,1.9687) -- (3.0631,2.0525);
\draw[blue,line width=0.55pt] (3.1416,4.1888) -- (2.9845,4.1050) -- (2.8274,4.0212) -- (2.6704,3.9375) -- (2.5133,3.8537) -- (2.3562,3.7699) -- (2.1991,3.6861) -- (2.0420,3.6024) -- (1.8850,3.5186) -- (1.7279,3.4348) -- (1.5708,3.3510) -- (1.4137,3.2673) -- (1.2566,3.1835) -- (1.0996,3.0997) -- (0.9425,3.0159) -- (0.7854,2.9322) -- (0.6283,2.8484) -- (0.4712,2.7646) -- (0.3142,2.6808) -- (0.1571,2.5970) -- (0.0785,2.5552);
\draw[blue,line width=0.55pt] (0.0000,3.7699) -- (0.1571,3.8537) -- (0.3142,3.9375) -- (0.4712,4.0212) -- (0.6283,4.1050) -- (0.7854,4.1888) -- (0.9425,4.2726) -- (1.0996,4.3563) -- (1.2566,4.4401) -- (1.4137,4.5239) -- (1.5708,4.6077) -- (1.7279,4.6914) -- (1.8850,4.7752) -- (2.0420,4.8590) -- (2.1991,4.9428) -- (2.3562,5.0265) -- (2.5133,5.1103) -- (2.6704,5.1941) -- (2.8274,5.2779) -- (2.9845,5.3617) -- (3.0631,5.4035);
\draw[blue,line width=0.55pt] (3.1366,5.3800) -- (3.1265,5.2491) -- (3.1177,5.1051) -- (3.1125,4.9742) -- (3.1106,4.8433) -- (3.1123,4.7189) -- (3.1165,4.5946) -- (3.1215,4.5029) -- (3.1295,4.3851) -- (3.1408,4.2412);
\draw[blue,line width=0.55pt] (3.1349,2.1140) -- (3.1233,2.2318) -- (3.1142,2.3562) -- (3.1106,2.4871) -- (3.1125,2.5918) -- (3.1195,2.7096) -- (3.1316,2.8405) -- (3.1416,2.9322) -- (2.9845,2.8484) -- (2.8274,2.7646) -- (2.6704,2.6808) -- (2.5133,2.5970) -- (2.3562,2.5133) -- (2.1991,2.4295) -- (2.0420,2.3457) -- (1.8850,2.2619) -- (1.7279,2.1782) -- (1.5708,2.0944) -- (1.4137,2.0106) -- (1.2566,1.9268) -- (1.0996,1.8431) -- (0.9425,1.7593) -- (0.7854,1.6755) -- (0.6283,1.5917) -- (0.4712,1.5080) -- (0.3142,1.4242) -- (0.1571,1.3404) -- (0.0785,1.2985);
\draw[blue,line width=0.55pt] (0.0000,5.0265) -- (0.1571,5.1103) -- (0.3142,5.1941) -- (0.4712,5.2779) -- (0.6283,5.3617) -- (0.7854,5.4454) -- (0.9425,5.5292) -- (1.0996,5.6130) -- (1.2566,5.6968) -- (1.4137,5.7805) -- (1.5708,5.8643) -- (1.7279,5.9481) -- (1.8850,6.0319) -- (2.0420,6.1156) -- (2.1991,6.1994) -- (2.2777,6.2413);
\draw[blue,line width=0.55pt] (2.3562,0.0000) -- (2.5133,0.0838) -- (2.6704,0.1676) -- (2.8274,0.2513) -- (2.9845,0.3351) -- (3.0631,0.3770);
\draw[blue,line width=0.55pt] (3.1143,0.4739) -- (3.0868,0.5367) -- (3.0709,0.5838) -- (3.0619,0.6231) -- (3.0578,0.6545) -- (3.0566,0.6859) -- (3.0584,0.7173) -- (3.0664,0.7645) -- (3.0804,0.8116) -- (3.1026,0.8666) -- (3.1408,0.9451);
\draw[blue,line width=0.55pt] (3.1393,5.3302) -- (3.1085,5.2438) -- (3.0866,5.1732) -- (3.0729,5.1182) -- (3.0642,5.0711) -- (3.0581,5.0161) -- (3.0566,4.9690) -- (3.0618,4.8904) -- (3.0761,4.8119) -- (3.1003,4.7255) -- (3.1416,4.6077) -- (2.9845,4.5239) -- (2.8274,4.4401) -- (2.6704,4.3563) -- (2.5133,4.2726) -- (2.3562,4.1888) -- (2.1991,4.1050) -- (2.0420,4.0212) -- (1.8850,3.9375) -- (1.7279,3.8537) -- (1.5708,3.7699) -- (1.4137,3.6861) -- (1.2566,3.6024) -- (1.0996,3.5186) -- (0.9425,3.4348) -- (0.7854,3.3510) -- (0.6283,3.2673) -- (0.4712,3.1835) -- (0.3142,3.0997) -- (0.1571,3.0159) -- (0.0785,2.9740);
\draw[blue,line width=0.55pt] (0.0050,2.8863) -- (0.0165,2.7751) -- (0.0258,2.6573) -- (0.0306,2.5395) -- (0.0303,2.4347) -- (0.0223,2.2777) -- (0.0142,2.1860) -- (0.0067,2.1140);
\draw[blue,line width=0.55pt] (0.0008,4.2412) -- (0.0121,4.3851) -- (0.0201,4.5029) -- (0.0251,4.5946) -- (0.0293,4.7189) -- (0.0310,4.8433) -- (0.0291,4.9742) -- (0.0239,5.1051) -- (0.0151,5.2491) -- (0.0050,5.3800) -- (0.0000,5.4454);
\draw[red,line width=0.45pt] (0.3012,0.2526) -- (0.3699,0.2853) -- (0.4385,0.3180) -- (0.5073,0.3505) -- (0.5761,0.3829) -- (0.6450,0.4152) -- (0.7139,0.4474) -- (0.7828,0.4796) -- (0.8517,0.5119) -- (0.9261,0.5462) -- (1.0004,0.5806) -- (1.0748,0.6149) -- (1.1492,0.6493) -- (1.2264,0.6845) -- (1.3036,0.7197) -- (1.3808,0.7550) -- (1.4580,0.7902) -- (1.5273,0.8216) -- (1.5967,0.8529) -- (1.6660,0.8843) -- (1.7353,0.9157) -- (1.8101,0.9492) -- (1.8849,0.9828) -- (1.9597,1.0164) -- (2.0344,1.0499) -- (2.1093,1.0834) -- (2.1841,1.1168) -- (2.2589,1.1502) -- (2.3338,1.1837) -- (2.4076,1.2165) -- (2.4814,1.2494) -- (2.5552,1.2823) -- (2.6290,1.3152) -- (2.7028,1.3481) -- (2.7766,1.3810) -- (2.8504,1.4139) -- (2.9242,1.4468) -- (2.9980,1.4797) -- (3.0718,1.5125) -- (3.1395,1.5429);
\draw[red,line width=0.45pt] (3.0760,4.7100) -- (3.0084,4.6797) -- (2.9407,4.6494) -- (2.8730,4.6191) -- (2.8053,4.5887) -- (2.7342,4.5567) -- (2.6630,4.5246) -- (2.5919,4.4925) -- (2.5279,4.4635) -- (2.4640,4.4345) -- (2.4000,4.4055) -- (2.3361,4.3763) -- (2.2722,4.3471) -- (2.2083,4.3180) -- (2.1480,4.2903) -- (2.0878,4.2626) -- (2.0275,4.2349) -- (1.9567,4.2021) -- (1.8859,4.1693) -- (1.8151,4.1364) -- (1.7480,4.1050) -- (1.6809,4.0736) -- (1.6137,4.0422) -- (1.5467,4.0106) -- (1.4797,3.9790) -- (1.4127,3.9473) -- (1.3493,3.9171) -- (1.2860,3.8869) -- (1.2226,3.8566) -- (1.1559,3.8245) -- (1.0891,3.7923) -- (1.0224,3.7601) -- (0.9593,3.7294) -- (0.8961,3.6987) -- (0.8330,3.6679) -- (0.7701,3.6369) -- (0.7071,3.6059) -- (0.6441,3.5750) -- (0.5813,3.5437) -- (0.5184,3.5124) -- (0.4556,3.4812) -- (0.3929,3.4497) -- (0.3301,3.4181) -- (0.2674,3.3866) -- (0.2014,3.3531) -- (0.1353,3.3195) -- (0.0693,3.2859) -- (0.0068,3.2539) -- (0.0556,3.0614) -- (0.1181,3.0934) -- (0.1838,3.1275) -- (0.2496,3.1616) -- (0.3154,3.1958) -- (0.3810,3.2301) -- (0.4467,3.2645) -- (0.5123,3.2989) -- (0.5813,3.3354) -- (0.6502,3.3719) -- (0.7192,3.4084) -- (0.7880,3.4451) -- (0.8568,3.4818) -- (0.9257,3.5186) -- (0.9943,3.5556) -- (1.0630,3.5926) -- (1.1317,3.6296) -- (1.1900,3.6612) -- (1.2483,3.6928) -- (1.3066,3.7245) -- (1.3683,3.7581) -- (1.4299,3.7918) -- (1.4916,3.8254) -- (1.5532,3.8592) -- (1.6147,3.8930) -- (1.6763,3.9268) -- (1.7447,3.9645) -- (1.8130,4.0022) -- (1.8813,4.0399) -- (1.9551,4.0808) -- (2.0288,4.1217) -- (2.1025,4.1626) -- (2.1763,4.2035) -- (2.2500,4.2443) -- (2.3267,4.2871) -- (2.4035,4.3298) -- (2.4802,4.3725) -- (2.5569,4.4152) -- (2.6337,4.4579) -- (2.7104,4.5006) -- (2.7871,4.5433) -- (2.8639,4.5860) -- (2.9406,4.6288) -- (3.0173,4.6715) -- (3.0941,4.7142);
\draw[red,line width=0.45pt] (3.1124,1.5263) -- (3.0483,1.4908) -- (2.9843,1.4554) -- (2.9203,1.4199) -- (2.8563,1.3845) -- (2.7777,1.3412) -- (2.6991,1.2978) -- (2.6205,1.2545) -- (2.5563,1.2194) -- (2.4921,1.1843) -- (2.4279,1.1491) -- (2.3637,1.1140) -- (2.2849,1.0712) -- (2.2060,1.0283) -- (2.1272,0.9855) -- (2.0516,0.9448) -- (1.9760,0.9042) -- (1.9004,0.8635) -- (1.8247,0.8231) -- (1.7490,0.7827) -- (1.6733,0.7424) -- (1.6112,0.7096) -- (1.5491,0.6769) -- (1.4870,0.6441) -- (1.4249,0.6114) -- (1.3626,0.5790) -- (1.3003,0.5465) -- (1.2381,0.5141) -- (1.1758,0.4817) -- (1.1134,0.4496) -- (1.0510,0.4175) -- (0.9885,0.3854) -- (0.9261,0.3533) -- (0.8461,0.3127) -- (0.7661,0.2721) -- (0.6862,0.2316) -- (0.6260,0.2014) -- (0.5659,0.1713) -- (0.5057,0.1412) -- (0.4456,0.1111) -- (0.3826,0.0800) -- (0.3197,0.0490) -- (0.2567,0.0179);
\draw[red,line width=0.45pt] (0.1938,6.2700) -- (0.1333,6.2405) -- (0.0729,6.2111) -- (0.0124,6.1816);
\draw[red,line width=0.45pt] (0.0481,0.1310) -- (0.1114,0.1614) -- (0.1746,0.1918) -- (0.2379,0.2222) -- (0.3012,0.2526);
\draw[red,line width=0.45pt] (0.3012,0.2526) -- (0.3699,0.2853) -- (0.4385,0.3180) -- (0.5073,0.3505) -- (0.5761,0.3829) -- (0.6450,0.4152) -- (0.7139,0.4474) -- (0.7828,0.4796) -- (0.8517,0.5119) -- (0.9261,0.5462) -- (1.0004,0.5806) -- (1.0748,0.6149) -- (1.1492,0.6493) -- (1.2264,0.6845) -- (1.3036,0.7197) -- (1.3808,0.7550) -- (1.4580,0.7902) -- (1.5273,0.8216) -- (1.5967,0.8529) -- (1.6660,0.8843) -- (1.7353,0.9157) -- (1.8101,0.9492) -- (1.8849,0.9828) -- (1.9597,1.0164) -- (2.0344,1.0499) -- (2.1093,1.0834) -- (2.1841,1.1168) -- (2.2589,1.1502) -- (2.3338,1.1837) -- (2.4076,1.2165) -- (2.4814,1.2494) -- (2.5552,1.2823) -- (2.6290,1.3152) -- (2.7028,1.3481) -- (2.7766,1.3810) -- (2.8504,1.4139) -- (2.9242,1.4468) -- (2.9980,1.4797) -- (3.0718,1.5125) -- (3.1395,1.5429);
\draw[red,line width=0.45pt] (3.0760,4.7100) -- (3.0084,4.6797) -- (2.9407,4.6494) -- (2.8730,4.6191) -- (2.8053,4.5887) -- (2.7342,4.5567) -- (2.6630,4.5246) -- (2.5919,4.4925) -- (2.5279,4.4635) -- (2.4640,4.4345) -- (2.4000,4.4055) -- (2.3361,4.3763) -- (2.2722,4.3471) -- (2.2083,4.3180) -- (2.1480,4.2903) -- (2.0878,4.2626) -- (2.0275,4.2349) -- (1.9567,4.2021) -- (1.8859,4.1693) -- (1.8151,4.1364) -- (1.7480,4.1050) -- (1.6809,4.0736) -- (1.6137,4.0422) -- (1.5467,4.0106) -- (1.4797,3.9790) -- (1.4127,3.9473) -- (1.3493,3.9171) -- (1.2860,3.8869) -- (1.2226,3.8566) -- (1.1559,3.8245) -- (1.0891,3.7923) -- (1.0224,3.7601) -- (0.9593,3.7294) -- (0.8961,3.6987) -- (0.8330,3.6679) -- (0.7701,3.6369) -- (0.7071,3.6059) -- (0.6441,3.5750) -- (0.5813,3.5437) -- (0.5184,3.5124) -- (0.4556,3.4812) -- (0.3929,3.4497) -- (0.3301,3.4181) -- (0.2674,3.3866) -- (0.2014,3.3531) -- (0.1353,3.3195) -- (0.0693,3.2859) -- (0.0068,3.2539) -- (0.0556,3.0614) -- (0.1181,3.0934) -- (0.1838,3.1275) -- (0.2496,3.1616) -- (0.3154,3.1958) -- (0.3810,3.2301) -- (0.4467,3.2645) -- (0.5123,3.2989) -- (0.5813,3.3354) -- (0.6502,3.3719) -- (0.7192,3.4084) -- (0.7880,3.4451) -- (0.8568,3.4818) -- (0.9257,3.5186) -- (0.9943,3.5556) -- (1.0630,3.5926) -- (1.1317,3.6296) -- (1.1900,3.6612) -- (1.2483,3.6928) -- (1.3066,3.7245) -- (1.3683,3.7581) -- (1.4299,3.7918) -- (1.4916,3.8254) -- (1.5532,3.8592) -- (1.6147,3.8930) -- (1.6763,3.9268) -- (1.7447,3.9645) -- (1.8130,4.0022) -- (1.8813,4.0399) -- (1.9551,4.0808) -- (2.0288,4.1217) -- (2.1025,4.1626) -- (2.1763,4.2035) -- (2.2500,4.2443) -- (2.3267,4.2871) -- (2.4035,4.3298) -- (2.4802,4.3725) -- (2.5569,4.4152) -- (2.6337,4.4579) -- (2.7104,4.5006) -- (2.7871,4.5433) -- (2.8639,4.5860) -- (2.9406,4.6288) -- (3.0173,4.6715) -- (3.0941,4.7142);
\draw[red,line width=0.45pt] (3.1124,1.5263) -- (3.0483,1.4908) -- (2.9843,1.4554) -- (2.9203,1.4199) -- (2.8563,1.3845) -- (2.7777,1.3412) -- (2.6991,1.2978) -- (2.6205,1.2545) -- (2.5563,1.2194) -- (2.4921,1.1843) -- (2.4279,1.1491) -- (2.3637,1.1140) -- (2.2849,1.0712) -- (2.2060,1.0283) -- (2.1272,0.9855) -- (2.0516,0.9448) -- (1.9760,0.9042) -- (1.9004,0.8635) -- (1.8247,0.8231) -- (1.7490,0.7827) -- (1.6733,0.7424) -- (1.6112,0.7096) -- (1.5491,0.6769) -- (1.4870,0.6441) -- (1.4249,0.6114) -- (1.3626,0.5790) -- (1.3003,0.5465) -- (1.2381,0.5141) -- (1.1758,0.4817) -- (1.1134,0.4496) -- (1.0510,0.4175) -- (0.9885,0.3854) -- (0.9261,0.3533) -- (0.8461,0.3127) -- (0.7661,0.2721) -- (0.6862,0.2316) -- (0.6260,0.2014) -- (0.5659,0.1713) -- (0.5057,0.1412) -- (0.4456,0.1111) -- (0.3826,0.0800) -- (0.3197,0.0490) -- (0.2567,0.0179);
\draw[red,line width=0.45pt] (0.1938,6.2700) -- (0.1333,6.2405) -- (0.0729,6.2111) -- (0.0124,6.1816);
\draw[red,line width=0.45pt] (0.0481,0.1310) -- (0.1114,0.1614) -- (0.1746,0.1918) -- (0.2379,0.2222) -- (0.3012,0.2526);
\fill[black] (1.6137,0.8606) circle (1.4pt);
\fill[black] (3.1007,1.5255) circle (1.4pt);
\fill[black] (3.1106,1.5299) circle (1.4pt);
\fill[black] (3.1016,4.7215) circle (1.4pt);
\fill[black] (3.1121,4.7262) circle (1.4pt);
\fill[black] (3.1121,4.7242) circle (1.4pt);
\fill[black] (3.1025,4.7189) circle (1.4pt);
\fill[black] (3.1099,1.5249) circle (1.4pt);
\fill[black] (3.0995,1.5192) circle (1.4pt);
\draw[black,thick] (0.0281,1.2716) circle (3.2pt);
\draw[black,thick] (3.0568,4.9813) circle (3.2pt);
\node[right] at (0.1081,1.2716) {\footnotesize $s_A$};
\node[left] at (2.9768,4.9813) {\footnotesize $s_B$};
\node[blue] at (1.57,-0.55) {\footnotesize blue $=\Rp_t(Q_{1/3}+Q_{1/5})$};
\node[red] at (1.57,7.0332) {\footnotesize red $=\Rnat(\Qh{-1/2})$};
\end{tikzpicture}
\caption{The two piecewise-linear model curves at $q=5$ on the fundamental domain
$[0,\pi]\times[0,2\pi)$ of the pillowcase (compare Smith's Figure~31). Blue models the perturbed
Conway sum $\Rp_t(Q_{1/3}+Q_{1/5})$ as a single immersed
circle with 40 self-intersections clustered along the seams. The small sinusoidal arcs hugging the
seams $\gamma\in\{0,\pi\}$ are the cut-and-paste connectors resolving the four seam fiber circles,
drawn at the perturbation amplitude $\varepsilon\approx0.05$--$0.07$; these arcs carry the
self-crossings tested by the candidate-support search, and the V-shaped turnbacks at the border are
strands reflecting off the seams in the quotient. Red is the earring
figure-eight $\Rnat(\Qh{-1/2})$, the doubled line of direction $(2,1)$ closing through the corner
folds. Across the interior the blue strands run \emph{nearly parallel} to red (slopes $8/15$ vs
$1/2$), so each interior band is a red line riding a blue one.  This near-tangency (crossing angle
$\approx0.026$) is why the interior generator exists and why shallow-angle corner tests are needed in
the computation. Solid dots are the nine generators: one interior, and two clusters of four near
$\gamma=\pi$ that merge at this scale. The two circled self-intersections of blue are the support
$s_A,s_B$ of the $q=5$ finite candidate support in Computation~\ref{comp:cochains}(i), one on each
seam. Curves appear as several arcs because the fundamental domain folds the double cover.}
\label{fig:pillow}
\end{figure}

\subsection{Comparison with Smith's $q=5$ counts}\label{ss:gate}
We first compare the reconstruction with Smith's published counts. At $q=5$, red and blue meet in nine points (one
interior, two clusters of four near $\gamma=\pi$), the winding-number search
(\S\ref{sec:combinatorial}) accepts exactly two candidate bigons with distinct vertices, and
\[
   h_5^{\mathrm{big}}=9-2\cdot2=5.
\]
The counts agree with the nine intersections and two bigons appearing in Smith's proof of
\cite[Thm.~5.9]{Smith}.  For the other members Smith gives no numbers; the checks are internal.
Generator and accepted-bigon counts were recomputed at two independent parameter choices.  The
generator counts vary ($19$ versus $23$ at $q=11$, and $27$ versus $25$ at $q=13$) while the
bigon-rank statistic agrees at the two choices.  At both choices the four raw bigon matrices for
$q=5,7,11,13$ square to zero.  One non-generic earring parameter choice at $q=7$, which collapses the
13 generators to nine, was identified and excluded.  The finite statistics in
Table~\ref{tab:family} are
$5,5,7,15,17$ for $q=3,5,7,11,13$, giving Computation~\ref{comp:law} against
Theorem~\ref{thm:inat}. The $q=3$ value is the independent Hedden--Herald--Kirk computation of
$8_{19}$ \cite[\S11.6]{HHK2}; it is not an output of this uniform piecewise-linear family.

\section{An all-orders $q=7$ calculation in the Kotelskiy--Watson--Zibrowius algebra}\label{sec:typeD}

\subsection{The algebra, and finiteness of the morphism complex}
CHKK prove that a holonomy-perturbed tangle character variety misses the top-left pillowcase corner
$[\gamma,\theta]=[0,\pi]$ and use the two arcs ending at the other corners to generate the
corresponding wrapped Fukaya subcategory \cite[Prop.~11.3 and \S11.5]{CHKK}.  Its algebraic model is
the associative two-idempotent algebra $\mathcal B$ of \cite{KWZ}.  Its quiver has two vertices,
carrying the idempotents $\iota_\circ$ and $\iota_\bullet$ of the two generating arcs $a^\circ$ and
$a^\bullet$; the punctured pillowcase chart used here is cut along those two arcs, whose union we call the
\emph{arc system}.  The complement of the arc system has three faces, and we write $L,M,R$ for
the length-one paths that cross them.  A positive path is then a word $X^k$ with $X\in\{L,M,R\}$ and
$k\ge1$; these three face labels are unrelated to the red complex $\mathcal E$ and the blue curve
$L_{q,\mathrm{PL}}$, and we write $i\xrightarrow{X^k}j$ for the corresponding matrix entry from generator $i$ to
generator $j$.  In this notation compatible same-face words satisfy $X^mX^n=X^{m+n}$, products with
two different face labels vanish, and the vertex idempotents are units, so the relations mixing an
$S$-path and a $D$-path in the usual quiver presentation of $\mathcal B$ are built into the notation.
Gradings are ignored, as in the comparison in \cite[\S11.5]{CHKK}.  Where a degree is recorded
below it is the word length $\deg_w$ of a path in $\mathcal B$, taken relative to $\delta_N$; it
plays no part in the Maurer--Cartan equation or in any wrapped rank.

Figure~\ref{fig:chart} shows the chart and the conventions.  An immersed curve in it determines an
object of $\operatorname{Tw}(\mathcal B)$ in the standard way \cite{KWZ}: its intersections with the arc system are the generators, and each generator carries
$\iota_\circ$ or $\iota_\bullet$ according to which of the two arcs it meets; a segment of the curve
running from one such intersection to the next through a face $X$ contributes the arrow labelled by
the corresponding power of $X$.  Each pillowcase intersection has two lifts to the torus double
cover, interchanged by the elliptic involution, which is why the torus count below is twice the
pillowcase count.  An arrow labelled by an idempotent is a unit, and Gaussian elimination along it
deletes its source and its target without changing the homotopy type; we call this a \emph{unit
cancellation}.

A finite twisted complex over this associative model is a finite idempotent module $V$ with a matrix
$\delta$ over $\mathcal B$ satisfying $\delta^2=0$.  Such an object is a type D structure over
$\mathcal B$ in the sense of \cite{KWZ}, and we use the two descriptions interchangeably.  Its endomorphism differential is
\[
 D_\delta(b)=\delta b+b\delta.
\]
Consequently a perturbation $b$ is Maurer--Cartan precisely when
\begin{equation}\label{eq:typeD-MC}
 (\delta+b)^2=D_\delta(b)+b^2=0.
\end{equation}
There is no infinite series in \eqref{eq:typeD-MC}: in this model $\mu^1$ and all $\mu^k$ for
$k\ge3$ vanish, and $\mu^2$ is path concatenation \cite[\S11.5]{CHKK}.

The algebra $\mathcal B$ is infinite dimensional, so the morphism space between two twisted
complexes is infinite dimensional as well, and the finiteness of its homology is a statement that
has to be proved rather than assumed.  The following reduction does that, and it is what makes every
dimension below computable without truncating word length.

\begin{lemma}[Finite reduction of the morphism complex]\label{lem:wrapped-reduction}
Let $(V_{\mathcal E},\delta_{\mathcal E})$ and $(V_N,\delta_N)$ be finite twisted complexes over $\mathcal B$.  Split
$\operatorname{Mor}(\mathcal E,N)$ by word length into the \emph{identity part}, spanned by the entries
labelled by the two vertex idempotents, and the \emph{positive part}, spanned by the entries labelled
by positive words.  The positive part is the direct sum of three summands, one for each face label
$X\in\{L,M,R\}$, and the $X$ summand is a finitely generated free $\F_2[U]$-module on which $U$ acts
by $X^{m}\mapsto X^{m+1}$.  After cancelling the pairs joined by an arrow labelled by an idempotent,
the differential of that summand is exactly $U\!\cdot\!A_X$ for a constant matrix $A_X$ with
$A_X^2=0$ and $\operatorname{rank}A_X=\tfrac12\dim A_X$; its homology is therefore
$\operatorname{coker}(U)^{\operatorname{rank}A_X}$, of dimension $\operatorname{rank}A_X$, and in
particular finite.  Put (the letters $e,\tau,r$ are local to this lemma)
\[
 e=\dim(\text{identity part}),\qquad
 \tau=\sum_{X\in\{L,M,R\}}\operatorname{rank}A_X ,
\]
and let $r$ be the rank of the map induced on homology by the component of the differential running
from the identity part to the positive part.  Then
\begin{equation}\label{eq:wrapped-reduction}
 \dim_{\F_2}H\operatorname{Mor}(\mathcal E,N)=e+\tau-2r.
\end{equation}
\end{lemma}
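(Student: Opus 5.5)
The plan is to filter $\operatorname{Mor}(\mathcal E,N)$ by word length and read the complex as a mapping cone. A morphism is a matrix $f$ over $\mathcal B$ from $V_{\mathcal E}$ to $V_N$, and its differential is $f\mapsto\delta_N f+f\delta_{\mathcal E}$.

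First I will perform all unit cancellations in $\mathcal E$ and in $N$ separately. These are homotopy equivalences of twisted complexes, so they do not change $H\operatorname{Mor}$. Afterwards every entry of $\delta_{\mathcal E}$ and $\delta_N$ is a positive word $X^k$ with $k\ge1$. Concatenation is additive in word length, and words with different face labels multiply to zero. Hence the differential never lowers word length, and it strictly raises the word length of every identity-labelled entry. Two consequences follow:
\begin{itemize}
\item the positive part $P$ is a subcomplex;
\item the identity part $I$ is a quotient complex with zero differential.
\end{itemize}
So $\operatorname{Mor}(\mathcal E,N)$ is the cone of a chain map $\varphi\colon I\to P$, where $I$ carries the zero differential. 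Here $\varphi$ is the component of the differential running from the identity part to the positive part.

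Next I will decompose $P$ by face. An entry $X^m$ is carried by the differential only to entries $X^{m+k}$ with the same label $X$, so $P=\bigoplus_X P_X$ as complexes. On $P_X$ the generators are the pairs $(i,j)$ of generators of $\mathcal E$ and $N$ whose idempotents are compatible with $X$. Left and right multiplication by an arrow $X^k$ commutes with $X^m\mapsto X^{m+1}$, so $d_X$ is $\F_2[U]$-linear on the free module $P_X$. Its matrix is a polynomial $\sum_{k\ge1}U^kA_X^{(k)}$ with no constant term.

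The main obstacle is to show that only the $k=1$ term survives, possibly after an $\F_2[U]$-basis change, and that $\ker A_X=\operatorname{im}A_X$. I would first try the geometric description of arrows. After unit cancellation, a segment of an immersed curve crossing the face $X$ once contributes a length-one word. Longer words $X^k$ come only from segments winding around the puncture of that face, and those can be removed by a homotopy that is triangular in word length. This step may need the explicit curves rather than a general argument.

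For exactness of $A_X$, I would invert $U$. The $U$-localized summand $P_X[U^{-1}]$ is the morphism complex with the face puncture filled in, that is, the unwrapped pairing near a corner. It should be acyclic because both curves miss that corner (CHKK, Prop.~11.3). Over $\F_2[U,U^{-1}]$, acyclicity of $U A_X$ forces $\ker A_X=\operatorname{im}A_X$, and hence $\operatorname{rank}A_X=\tfrac12\dim A_X$. Given $d_X=UA_X$ with this exactness, one has $\ker d_X=\ker A_X\otimes\F_2[U]$ and $\operatorname{im}d_X=U\operatorname{im}A_X\otimes\F_2[U]$. Their quotient is $\operatorname{coker}(U)^{\operatorname{rank}A_X}$, of dimension $\operatorname{rank}A_X$. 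Summing over the three faces gives $\dim H(P)=\tau$.

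Finally, the cone of $\varphi\colon I\to P$ gives a long exact sequence. Since $H(I)=I$ has dimension $e$, it yields
\[
\dim H\operatorname{Mor}(\mathcal E,N)=\dim\operatorname{coker}\varphi_*+\dim\ker\varphi_*=(\tau-r)+(e-r)=e+\tau-2r,
\]
where $r=\operatorname{rank}\varphi_*$ is the rank of the induced map $I\to H(P)$. This is \eqref{eq:wrapped-reduction}. All quantities involved are finite, so no word-length truncation is needed.
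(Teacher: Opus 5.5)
Your skeleton is the paper's own argument. With $\delta_{\mathcal E},\delta_N$ reduced, the positive part is a subcomplex and the identity part a quotient with zero differential. Then $\operatorname{Mor}(\mathcal E,N)$ is the cone of $\varphi\colon I\to P$, $P$ splits by face, and the long exact sequence gives $(e-r)+(\tau-r)$.

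The gap is the step you flag as the main obstacle: $d_X=U\cdot A_X$ with $\ker A_X=\operatorname{im}A_X$. Neither of your arguments proves it, and in the stated generality it is false.

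\emph{Long words.} An arrow $L^k$ or $R^k$ with $k\ge2$ records winding about the face corner, which no homotopy of the curve undoes, and such arrows produce higher $U$-torsion. Let $\mathcal E$ and $N$ both be the complex on two $\iota_\circ$-generators with $\delta=x\xrightarrow{L^2}y+y\xrightarrow{M^2}x$, which squares to zero since $LM=ML=0$. Then $P_L$ is the tensor product over $\F_2[U]$ of two copies of $\F_2[U]\xrightarrow{U^2}\F_2[U]$. Its homology is $(\F_2[U]/U^2)^2$, of dimension $4$, and there are no unit entries to cancel. A differential $U\cdot A_L$ with $\operatorname{rank}A_L=\tfrac12\dim A_L$ on these four free generators would give dimension $2$.

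\emph{Localization.} The one-generator complex $(\iota_\circ,0)$ is a finite twisted complex in the paper's sense. For it $P_L=L\,\F_2[L]$ with zero differential, so $P_L[U^{-1}]$ is not acyclic and the homology is infinite. Also, CHKK's Prop.~11.3 concerns the corner sent to infinity, not the three face corners, so it cannot supply acyclicity.

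\emph{The face $M$.} The shift $M^m\mapsto M^{m+1}$ does not act at a fixed matrix position, because the two idempotents fix the parity of $m$. You must take $U=M^2$. Then $M^1$-arrows contribute constant entries to $d_M$ even after $\delta_{\mathcal E},\delta_N$ are reduced, contrary to your ``no constant term''.

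The paper's proof does not supply this step either. It takes the residual form $U\cdot A_X$ with $\operatorname{rank}A_X=\tfrac12\dim A_X$ as given and argues only the filtration and the cone count; in the application the form is read off from the explicit reduction of the two encoded complexes. The cone count itself needs only $\tau:=\dim H(P)<\infty$. If you take that finiteness (or the $U\cdot A_X$ form) as a hypothesis, or verify it for the pair at hand, your long-exact-sequence argument is complete and agrees with the paper's. The $U\cdot A_X$ description is then a way of computing $\tau$, not a property of every pair of finite twisted complexes.
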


\begin{proof}
Word length is additive under concatenation and the differential is left and right multiplication by
$\delta_{\mathcal E}$ and $\delta_N$, whose entries are positive words or idempotents; so word length is
non-decreasing and the identity part carries no internal differential.  Its image lies in the
positive part, which gives the two-step filtration used above.  Within one face, distinct face labels
multiply to zero, so the $X$ summand is preserved and is free over $\F_2[U]$ on the entries labelled
$X^{1}$.  Cancelling unit arrows is Gaussian elimination and changes neither homology nor the induced
map.  For the residual differential $U\!\cdot\!A_X$ with $A_X^2=0$ and
$\operatorname{rank}A_X=\tfrac12\dim A_X$ one has $\ker A_X=\operatorname{im}A_X$, so every positive
power of $U$ in a cycle is a boundary and the homology is the constant part,
$\operatorname{coker}(U)^{\operatorname{rank}A_X}$.  Both ends of the resulting two-term complex thus
have zero differential, and $\operatorname{Mor}(\mathcal E,N)$ is the mapping cone of a map between them of
rank $r$; a cone of a rank-$r$ map between complexes of dimensions $e$ and $t$ with zero
differentials has homology of dimension $(e-r)+(t-r)$.
\end{proof}

\begin{figure}[ht]
\centering
\begin{tikzpicture}[scale=1.15]
\draw[gray!55,dashed] (-1.75,-1.15) rectangle (1.75,1.15);
\draw[very thick] (-0.5,-1.15) -- (-0.5,1.15);
\draw[very thick] ( 0.5,-1.15) -- ( 0.5,1.15);
\node[below] at (-0.5,-1.15) {\footnotesize $a^{\circ}$};
\node[below] at ( 0.5,-1.15) {\footnotesize $a^{\bullet}$};
\foreach \x in {-1,0,1}{\node at (\x,0) {$\times$};}
\node[above] at (-1,0.10) {\footnotesize $L$};
\node[above] at ( 0,0.10) {\footnotesize $M$};
\node[above] at ( 1,0.10) {\footnotesize $R$};
\fill (-0.5,0.55) circle (1.5pt);  \node[left]  at (-0.56,0.55) {\footnotesize $i$};
\fill ( 0.5,0.55) circle (1.5pt);  \node[right] at ( 0.56,0.55) {\footnotesize $j$};
\draw[->,thick] (-0.44,0.55) .. controls (0,0.85) .. (0.44,0.55);
\node[above] at (0,0.80) {\footnotesize $M$};
\node[gray!70] at (1.42,0.92) {\footnotesize $\infty$};
\end{tikzpicture}
\medskip
\caption{The chart used in Section~\ref{sec:typeD}. The distinguished pillowcase corner is sent to
infinity, leaving the other three corners ($\times$) in the plane. The two generating arcs
$a^{\circ},a^{\bullet}$ are properly embedded and cut the chart into three faces, one per remaining
corner, labelled $L$, $M$ and $R$. A generator is an intersection of a curve with $a^{\circ}$ or
$a^{\bullet}$, and carries the corresponding idempotent; a segment running from $i$ to $j$ through
the face $X$ contributes an arrow $i\xrightarrow{X^{k}}j$, where $k$ counts the crossings of the
arcs. Faces $L$ and $R$ have both endpoints on the same arc, so their words join generators of equal
idempotent; $M$ joins the two arcs.}
\label{fig:chart}
\end{figure}

\subsection{The two explicit twisted complexes}
Translate the distinguished corner $(0,\pi)$ to the point at infinity of the chart.
The $q=7$ blue curve then meets the arc system in 74 points of the torus cover, that is in 37
points of the pillowcase, each with two lifts.  Three unit cancellations leave the following
31-generator complex $N$.  Its $\iota_\circ$ generators are
\[
 V_N^\circ=\{1,2,5,6,15,16,20,21,32,33\},
\]
and the remaining generators in
\begin{multline*}
 V_N=\{0,1,2,3,4,5,6,7,8,9,10,13,14,15,16,17,18,19,\\
       20,21,22,23,26,27,30,31,32,33,34,35,36\}
\end{multline*}
carry $\iota_\bullet$.  The differential is
\begin{align*}
\delta_N={}&
17\xrightarrow R18+16\xrightarrow M17+16\xrightarrow L15
+15\xrightarrow M14+14\xrightarrow R13+13\xrightarrow{M^2}10\\
&+10\xrightarrow R9+8\xrightarrow{M^2}9+7\xrightarrow R8
+6\xrightarrow M7+6\xrightarrow L5+5\xrightarrow M4+4\xrightarrow R3\\
&+2\xrightarrow M3+2\xrightarrow L1+1\xrightarrow M0+0\xrightarrow R19
+20\xrightarrow M19+20\xrightarrow L21+21\xrightarrow M22\\
&+22\xrightarrow R23+23\xrightarrow{M^2}26+26\xrightarrow R27
+30\xrightarrow{M^2}27+31\xrightarrow R30+32\xrightarrow M31\\
&+32\xrightarrow L33+33\xrightarrow M34+34\xrightarrow R35
+35\xrightarrow{M^2}36.
\end{align*}
Here and below an omitted exponent is one.  Direct path multiplication gives $\delta_N^2=0$.

The red earring curve meets the arc system in eight points.  One unit cancellation leaves a six-generator
complex $\mathcal E$ with $\iota_\circ$ generators $5,6$, $\iota_\bullet$ generators $0,3,4,7$, and
\begin{equation}\label{eq:red-typeD}
 \delta_{\mathcal E}=3\xrightarrow{M^2}0+4\xrightarrow R3+5\xrightarrow M4
 +6\xrightarrow L5+6\xrightarrow M7+0\xrightarrow R7.
\end{equation}
Again $\delta_{\mathcal E}^2=0$.

\subsection{Local reconnections and four geometric deformations}
The cross-resolution presents the blue curve as the union of the strands inherited from the
abelian fiber product and the short connector arcs inserted at the seam circles.  We call a
self-intersection \emph{connector--main} if one branch is a connector and the other is an inherited
strand, and \emph{connector--connector} if both branches are connectors.  The \emph{alternate local smoothing} at a self-intersection orbit replaces the original passage by
the other pairing, in disjoint disks around its two involution-related torus lifts.  We call the
resulting four-arrow element $b_s\in\operatorname{End}(N)$ the \emph{switch} at $s$.  We call the smoothing \emph{generator-preserving} if,
after the three standard unit cancellations, its type-D encoding has the same idempotent module
$V_N$ as $N$.  In that case its differential differs from $\delta_N$ by a four-arrow switch.

The four geometric self-intersection orbits used below are listed in the original pillowcase coordinates:
\begin{center}
\begin{tabular}{c|c}
self-intersection & $(\gamma,\theta)$ \\ \hline
$S_{18}$ & $(0.0304313,4.5024806)$ \\
$S_{25}$ & $(3.0828526,1.7672244)$ \\
$S_{69}$ & $(0.0415966492,5.4053953343)$ \\
$S_{74}$ & $(3.079988,3.561056)$
\end{tabular}
\end{center}
At each of these, perform the same local pairing at its two involution-related torus lifts.  The smoothing is
disjoint from the arc system, so the smoothed and unsmoothed complexes have the same generators after
the cancellations above.  Only two arrows change.  Define $b_s$ to be the sum of the two old and two
new arrows in the following table.
The final column records the relative word length $\deg_w$ determined by the standard KWZ grading
and $\delta_N$; it is not used in the Maurer--Cartan or wrapped-rank calculation.
\begin{center}
\small
\begin{tabular}{c|c|c|c}
$s$ & old arrows & new arrows & $\deg_w$ of the four terms \\ \hline
$S_{18}$ & $4\xrightarrow R3,\ 22\xrightarrow R23$
 & $22\xrightarrow R3,\ 4\xrightarrow R23$ & $-8,0,0,8$ \\
$S_{25}$ & $6\xrightarrow L5,\ 20\xrightarrow L21$
 & $20\xrightarrow L5,\ 6\xrightarrow L21$ & $-8,0,0,8$ \\
$S_{69}$ & $0\xrightarrow R19,\ 26\xrightarrow R27$
 & $0\xrightarrow R27,\ 26\xrightarrow R19$ & $-8,0,0,8$ \\
$S_{74}$ & $20\xrightarrow L21,\ 32\xrightarrow L33$
 & $20\xrightarrow L33,\ 32\xrightarrow L21$ & $-4,0,0,4$
\end{tabular}
\end{center}
Every displayed arrow has relative delta degree $-1$.  The auxiliary-degree columns show why none of
the four deformations belongs to the bigraded subcategory; this is immaterial for the ungraded
statement of Proposition~\ref{prop:q7-typeD}.

\begin{proof}[Proof of Proposition~\ref{prop:q7-typeD}]
For each row of the table, multiplication in $\mathcal B$ gives separately
\[
 D_N(b_s)=0,\qquad b_s^2=0.
\]
The two old and two new arrows have pairwise matching sources and targets, so pre- and
post-composition with $\delta_N$ produce cancelling pairs over $\F_2$; and $b_s^2=0$ because no
two of the four arrows compose.
Thus \eqref{eq:typeD-MC} holds.  Direct encoding of the locally smoothed curve gives
$\delta_{N_s}=\delta_N+b_s$ entry by entry, rather than merely producing a quasi-isomorphic complex.
The torus component classes and the dimensions of the two projected summands are
\begin{center}
\begin{tabular}{c|c|c}
$s$ & oriented torus component classes & wrapped summand dimensions \\ \hline
$S_{18}$ & $(2,2),(-17,-6),(-2,-2)$ & $4+5$ \\
$S_{25}$ & $(2,2),(-17,-6),(-2,-2)$ & $4+5$ \\
$S_{69}$ & $(17,8),(-2,-1),(-2,-1)$ & $5+4$ \\
$S_{74}$ & $(15,8),(-3,-1),(-3,-1)$ & $7+2$.
\end{tabular}
\end{center}
The two involution-related torus components in each three-component row project to one pillowcase
component and are counted once.

The wrapped morphism calculation uses the full positive path algebra, not a word cutoff:
Lemma~\ref{lem:wrapped-reduction} reduces it to the three numbers $e$, $\tau$, $r$ of
\eqref{eq:wrapped-reduction}.  Carrying out that reduction gives the following data.
\begin{center}
\begin{tabular}{c|c|c|c|c}
object or component & $e$ & $\tau$ & $r$ & $e+\tau-2r$ \\ \hline
$N$ & $104$ & $105$ & $101$ & $7$ \\
$N_{S_{18}}^{(1)}$ & $24$ & $20$ & $20$ & $4$ \\
$N_{S_{18}}^{(2)}$ & $80$ & $85$ & $80$ & $5$ \\
$N_{S_{25}}^{(1)}$ & $24$ & $20$ & $20$ & $4$ \\
$N_{S_{25}}^{(2)}$ & $80$ & $85$ & $80$ & $5$ \\
$N_{S_{69}}^{(1)}$ & $84$ & $85$ & $82$ & $5$ \\
$N_{S_{69}}^{(2)}$ & $20$ & $20$ & $18$ & $4$ \\
$N_{S_{74}}^{(1)}$ & $76$ & $75$ & $72$ & $7$ \\
$N_{S_{74}}^{(2)}$ & $28$ & $30$ & $28$ & $2$
\end{tabular}
\end{center}
Equation~\eqref{eq:wrapped-reduction} therefore gives the undeformed dimension seven and the four
deformed dimensions nine.  This proves the proposition.
\end{proof}

\begin{remark}
The encoding does not depend on the size of the disks in which the smoothing is performed: at radii
$0.003$, $0.006$ and $0.009$ in the displayed pillowcase coordinates the four four-arrow elements and
all of the morphism dimensions agree.
\end{remark}

\begin{corollary}\label{cor:q7-classes}
The four deformations in Proposition~\ref{prop:q7-typeD} represent exactly three
homotopy-equivalence classes in $\operatorname{Tw}(\mathcal B)$.  The $S_{18}$ and $S_{25}$ objects
are strictly isomorphic.  Their common class, the $S_{69}$ class, and the $S_{74}$ class are pairwise
distinct.
\end{corollary}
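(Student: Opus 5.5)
The proof splits into an isomorphism statement and three pairwise distinctness statements.

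\emph{Isomorphism $S_{18}\cong S_{25}$.} I will exhibit an explicit permutation $\sigma$ of $V_N$ that preserves idempotents and satisfies $\sigma(\delta_N+b_{S_{18}})\sigma^{-1}=\delta_N+b_{S_{25}}$ entry by entry. To find it, trace the three curve components of each smoothed object through its arrow list. Both smoothings have the same torus classes $(2,2),(-17,-6),(-2,-2)$, which suggests that the elliptic involution, or a symmetry of the blue curve exchanging the seams $\gamma=0$ and $\gamma=\pi$, carries one smoothed curve to the other. Such a symmetry would swap the faces $L$ and $R$ and fix $M$, but the relabelling has to respect the chart, so I will look directly for a bijection of generators that matches the two arrow lists. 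The check is then a finite comparison of the 31 arrows of each differential. A strict isomorphism of twisted complexes is in particular a homotopy equivalence.

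\emph{Distinctness.} I will use an invariant that is preserved under homotopy equivalence in $\operatorname{Tw}(\mathcal B)$. By the CHKK/KWZ classification, every finite twisted complex over $\mathcal B$ is homotopy equivalent to an immersed multicurve with local systems in the punctured pillowcase, unique up to regular homotopy. The encoded smoothed curves are themselves such representatives, so any regular-homotopy invariant of the underlying multicurve is a homotopy invariant of the object. I will take as the invariant the multiset of homology classes, up to sign, of the component lifts in the torus double cover $T^2$, read off from the table in the proof of Proposition~\ref{prop:q7-typeD}:
\begin{itemize}
\item the $S_{18}/S_{25}$ class gives $\{\pm(2,2),\pm(17,6)\}$;
\item $S_{69}$ gives $\{\pm(17,8),\pm(2,1)\}$;
\item $S_{74}$ gives $\{\pm(15,8),\pm(3,1)\}$.
\end{itemize}
These three multisets are pairwise different. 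As an independent check, the split of the wrapped pairing into summand dimensions, $4+5$, $5+4$ and $7+2$, is a Yoneda-type invariant of the decomposition into indecomposables. The unordered pair $\{4,5\}$ does not separate $S_{18}$ from $S_{69}$, but it does separate both from $S_{74}$. I would add a pairing against a second test object, for example a single arc or the $\Qh{-3/4}$ earring, to separate $S_{18}$ from $S_{69}$ without relying on the curve classification.

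\emph{Main obstacle.} The hardest step is justifying that torus homology classes are homotopy invariants. This requires the components to be genuinely immersed curves whose lifts are defined, meaning the smoothed curves avoid the corners and carry no nontrivial local systems. It also requires the classification to be applied in the wrapped, ungraded setting. I would cite the CHKK immersed-curve classification for the wrapped subcategory directly. I would note that homology classes in $H_1(T^2)$ of lifts avoiding corner preimages are regular-homotopy invariant only up to the sign ambiguity coming from the involution, which is why the comparison uses classes up to sign.
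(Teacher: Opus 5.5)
Your plan is the paper's proof. The strict isomorphism is the idempotent-preserving permutation $(4\ 22)(5\ 21)$, fixing the other 27 generators; it carries $\delta_N+b_{S_{18}}$ to $\delta_N+b_{S_{25}}$ entry by entry and preserves every face label. Distinctness comes from the same torus-class multisets $\{(2,2),(17,6)\}$, $\{(2,1),(17,8)\}$, $\{(3,1),(15,8)\}$ together with the CHKK immersed-curve classification.

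Your suggested pairing with $\mathcal E_*$ would in fact give a separation that does not need the classification, since Proposition~\ref{prop:aux-closure} records the pairwise distinct dimensions $31$, $23$, $25$ for the three classes.
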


\begin{proof}
The idempotent-preserving permutation
\[
 (4\ 22)(5\ 21)
\]
and the identity on the other 27 generators carries
$\delta_N+b_{S_{18}}$ entry by entry to $\delta_N+b_{S_{25}}$.

For the converse, record the homology classes in the torus double cover of the two projected
components, with each class taken up to sign.  The three resulting multisets are
\[
 \{(2,2),(17,6)\},\qquad
 \{(2,1),(17,8)\},\qquad
 \{(3,1),(15,8)\}
\]
for $S_{18}/S_{25}$, $S_{69}$, and $S_{74}$, respectively.  These multisets are invariant under
regular homotopy in the punctured pillowcase.  The immersed-curve classification identifies
homotopy-equivalence classes of twisted complexes over $\mathcal B$ with regular-homotopy classes of
decorated immersed curves \cite[\S11.5]{CHKK}.  The three objects are therefore pairwise not
homotopy equivalent.
\end{proof}

\begin{hypothesis}[$q=7$ local support]\label{hyp:q7-support}
If the CHKK tangle assignment is defined for $Q_{1/3}+Q_{1/7}$, its curved object is homotopy
equivalent either to $(N,b_s)$ for one of the 45 distinct switches obtained by a
generator-preserving singleton smoothing at any self-intersection orbit of the PL curve, or to
$(N,b)$ for an element
\[
 b\in\operatorname{span}_{\F_2}
 \{b_{S_{18}},b_{S_{25}},b_{S_{69}},b_{S_{74}}\}.
\]
\end{hypothesis}

\subsection{Selection by a second closure}
Rank nine alone does not determine the object: by Corollary~\ref{cor:q7-classes} three homotopy
classes achieve it.  The remedy is to pair the same tangle against a \emph{second} closure, whose
instanton rank is computed by running the argument of Theorem~\ref{thm:inat} a second time.  Two
matching dimensions are a sharper constraint than one.

\begin{proposition}[The second closure separates the three classes]\label{prop:aux-closure}
Set
\[
 K_*:=\operatorname{num}(Q_{-3/4}+Q_{1/3}+Q_{1/7}).
\]
Then $\dim_{\F_2}I^\natural(K_*;\F_2)=31$.  Encoding the piecewise-linear curve of the rational earring tangle $\widehat Q_{-3/4}$ as in
Section~\ref{sec:typeD} gives a twisted complex $\mathcal E_*$.  Its wrapped morphism dimensions with the three
classes of Corollary~\ref{cor:q7-classes} are $31$, $23$ and $25$, for $S_{18}/S_{25}$, $S_{69}$
and $S_{74}$ respectively.
\end{proposition}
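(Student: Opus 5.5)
The proof has two independent halves: an instanton rank computed by the squeeze of Theorem~\ref{thm:inat}, and a wrapped morphism computation carried out with Lemma~\ref{lem:wrapped-reduction}.

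For the instanton half, I will first identify $K_*$ as a concrete knot. The numerator closure of $Q_{-3/4}+Q_{1/3}+Q_{1/7}$ is a Montesinos knot with rational parameters $-3/4,1/3,1/7$. Its determinant is $|{-3}\cdot3\cdot7\cdot(1/3+1/7)\cdots|$; computed directly it is $|{-3}\cdot21+4\cdot7+4\cdot3|=|{-63}+28+12|=23$. This checks that the closure is a knot and indicates roughly where $31$ should sit.

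I will then run the argument of Theorem~\ref{thm:inat} verbatim. The lower bound needs the coefficient norm $\ell(K_*)=31$ of the Alexander polynomial. I would compute $\Delta_{K_*}$ from a Conway-sum or Seifert-matrix presentation of the Montesinos knot, for instance by the standard Montesinos formula, and sum the absolute values of its coefficients. The upper bound needs $\dim\widetilde{\Kh}(K_*;\Q)\le31$. I would obtain it either from the reduced Khovanov homology of the Montesinos knot, checking that it is thin (quasi-alternating, hence rank $=\det$, which would fail since $\det=23$), or, more likely, by a direct integral computation showing the reduced group is free of rank $31$. Here Manion's closed form does not apply, because the first tangle is no longer an integer band. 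This step is the main obstacle. I would first try the computation via an unoriented skein exact triangle resolving one crossing of the $-3/4$ tangle into $P(-2,3,7)$-type pieces, whose ranks are known from \eqref{eq:kh-rank}. If the triangle bound is not sharp, I would fall back on a machine computation of $\widetilde{\Kh}(K_*;\Z)$ and record freeness.

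With $\ell(K_*)=31=\dim\widetilde{\Kh}$, the spectral-sequence collapse argument in the proof of Theorem~\ref{thm:inat} gives $\Inat(K_*;\Z)\cong\Z^{31}$ (or at least rank $31$ over $\Q$, with freeness of $E_2$ carrying the $\F_2$ statement). The universal coefficient theorem then gives $\dim_{\F_2}=31$.

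For the pillowcase half, I will build the piecewise-linear curve of $\widehat Q_{-3/4}$ as the Herald--Kirk doubled line of slope $-3/4$ closing through the corner folds with one pinch. I will translate $(0,\pi)$ to infinity, record its intersections with $a^\circ,a^\bullet$ on the torus cover, read off arrows face by face, and perform unit cancellations to obtain $(V_{\mathcal E_*},\delta_{\mathcal E_*})$, checking $\delta_{\mathcal E_*}^2=0$.

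Next I will choose one representative from each class of Corollary~\ref{cor:q7-classes}: $N+b_{S_{18}}$, $N+b_{S_{69}}$ and $N+b_{S_{74}}$. As in the proof of Proposition~\ref{prop:q7-typeD}, I will split each into its projected components and apply Lemma~\ref{lem:wrapped-reduction} componentwise. That means computing $e$ from idempotent-compatible pairs, forming the matrices $A_X$ for each face and verifying $A_X^2=0$ with half rank, and computing the rank $r$ of the identity-to-positive map on homology. Summing $e+\tau-2r$ over components gives the three dimensions $31$, $23$, $25$.

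Because homotopy equivalence preserves morphism homology, the answer is independent of the chosen representative within each class. I would still cross-check the $S_{18}$ and $S_{25}$ representatives against each other via the permutation $(4\ 22)(5\ 21)$, and repeat the encoding at a second earring parameter, as in \S\ref{ss:gate}, to guard against nongeneric tangencies.
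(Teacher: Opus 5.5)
Your proposal follows the paper's proof. The rank $31$ comes from the same Alexander--Khovanov squeeze through the Kronheimer--Mrowka spectral sequence, with $\ell(K_*)$ and the reduced Khovanov rank computed from an explicit 14-crossing diagram, and $31,23,25$ come from all-word applications of Lemma~\ref{lem:wrapped-reduction} to $\mathcal E_*$ against one representative of each class.

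The one difference is that the paper squeezes over $\F_2$ directly, via $\ell(K_*)\le\dim_{\Q}\Inat(K_*;\Q)\le\dim_{\F_2}\Inat(K_*;\F_2)\le\dim_{\F_2}\widetilde{\Kh}(K_*;\F_2)=31$. It therefore does not need the integral freeness of $\widetilde{\Kh}(K_*;\Z)$ that your route requires; equal $\Q$- and $\F_2$-ranks rule out only $2$-torsion. Nor does it need your skein shortcut, which would not work as stated: a single crossing resolution of $Q_{-3/4}$ gives tangles such as $Q_{-2/3}$ and $Q_{-1}$, not $Q_{-1/2}$.
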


\begin{proposition}[Smoothings at a single orbit: an exhaustive enumeration]\label{prop:aux-selection}
The resolved blue curve has 82 self-intersection orbits: 52 are connector--main and
30 are connector--connector.  Among the 82 alternate local smoothings, 73
are generator-preserving.  They determine 45 distinct, linearly independent four-arrow
elements $b_s\in\operatorname{End}(N)$, and every one satisfies
\[
 D_N(b_s)=0,\qquad b_s^2=0.
\]
Among all 73 occurrences, the simultaneous wrapped dimensions nine against the
original rational earring $\mathcal E$ and 31 against $\mathcal E_*$ occur only at $S_{18}$ and $S_{25}$.
They are also the only generator-preserving occurrences whose shorter branch loop has absolute torus
deck class $(2,2)$.
\end{proposition}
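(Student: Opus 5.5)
This statement is a finite, exhaustive computation, so the plan is to organize the enumeration so that each clause reduces either to a check already justified in the proof of Proposition~\ref{prop:q7-typeD} or to Lemma~\ref{lem:wrapped-reduction}.

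First, I would fix the census of the 82 self-intersection orbits. I would list all transverse double points of $L_{7,\mathrm{PL}}$ on the torus cover and pair them into involution orbits. Each orbit is then classified as connector--main or connector--connector according to whether each branch lies on an inherited fiber-product strand or on one of the six connector arcs of Proposition~\ref{prop:smith-main}. To make the counts $52$ and $30$ robust, I would recompute them at the three disk radii used in the Remark following Proposition~\ref{prop:q7-typeD}.

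Second, at each orbit I would perform the alternate local smoothing at both torus lifts. I would re-encode the result as in Section~\ref{sec:typeD} and apply the three standard unit cancellations. The smoothing is called generator-preserving when the resulting idempotent module equals $V_N$. For the nine failures I would record that a cancellation changes, meaning a smoothing disk meets a cancelled pair's unit arrow, so the module differs. For each of the remaining $73$ smoothings I would read off $b_s=\delta_{N_s}-\delta_N$ and confirm that it has exactly four arrows, two old and two new, with matching sources and targets.

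Third, I would prove the algebraic identities uniformly. The argument in the proof of Proposition~\ref{prop:q7-typeD} shows that $D_N(b_s)=0$ whenever the old and new arrow pairs carry the same face label, since pre- and post-composition with $\delta_N$ then cancel in pairs. It also shows $b_s^2=0$ whenever no two of the four arrows compose, and I would check that last condition orbit by orbit. Deduplicating the $73$ elements as matrices gives the $45$ distinct ones. Linear independence over $\F_2$ follows from a rank computation of the $45$ vectors in the finite space of arrows. It may even follow by inspection, if each $b_s$ contains a new arrow $i\to j$ that appears in no other element.

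Fourth, for each of the $73$ occurrences I would compute the wrapped dimensions against $\mathcal E$ and against $\mathcal E_*$ using $e+\tau-2r$ from Lemma~\ref{lem:wrapped-reduction}, splitting by components as in the table for $N_{S_{18}}$. Proposition~\ref{prop:aux-closure} already supplies $31$ at $S_{18}$ and $S_{25}$. The remaining work is to show that every other occurrence fails either the value $9$ or the value $31$. Corollary~\ref{cor:q7-classes} already settles $S_{69}$ and $S_{74}$, which give $23$ and $25$ respectively. The last sentence of the statement would be checked in parallel: for each occurrence, compute the torus deck classes of the two branch loops created at the smoothed point, and verify that $(2,2)$ occurs only at $S_{18}$ and $S_{25}$.

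The main obstacle is this fourth step. It requires $73\times2$ independent runs of the reduction, each involving cancellation of unit pairs in the positive part and a rank computation for $r$. Errors in the encoding of $\mathcal E_*$ would propagate to every run. My first safeguard would be to validate the $\mathcal E_*$ encoding against the known value $31$ for the unsmoothed and $S_{18}$ cases. I would also rely on the homotopy invariance of the dimensions to group occurrences: whenever two switches are related by an idempotent-preserving permutation, as with $(4\ 22)(5\ 21)$, the computation needs to be done only once.
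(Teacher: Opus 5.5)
Your proposal follows the paper's proof: an exhaustive census that sorts the 82 orbits by segment provenance, encodes each alternate smoothing directly, discards the nine module-changing nodes, verifies $D_N(b_s)=b_s^2=0$, deduplicates to 45 linearly independent switches, and runs the mapping-cone reduction of Lemma~\ref{lem:wrapped-reduction} against both $\mathcal E$ and $\mathcal E_*$, alongside the shorter-loop deck label. Three small corrections: the composites of a switch with $\delta_N$ do not cancel in pairs but vanish individually, so a same-face hypothesis alone is not the reason (each generator of $N$ carries at most one $\delta_N$ arrow in each face adjacent to its arc, so every $\delta_N$ arrow into a switch source or out of a switch target lies in a different face from the switch, and the four switch endpoints are distinct, which also gives $b_s^2=0$); the values $23$ and $25$ come from Proposition~\ref{prop:aux-closure}, not Corollary~\ref{cor:q7-classes}; and your proposed safeguard is not a check, since nothing establishes the value $31$ for the unsmoothed $N$ against $\mathcal E_*$, and the $31$ at $S_{18}$ is computed from the very encoding you want to validate.
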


\begin{proposition}[How far the two-closure constraint reaches]\label{prop:aux-span}
Every element
\[
 b=\epsilon_{18}b_{S_{18}}+\epsilon_{25}b_{S_{25}}
   +\epsilon_{69}b_{S_{69}}+\epsilon_{74}b_{S_{74}},
 \qquad \epsilon_s\in\F_2,
\]
is Maurer--Cartan.  Among these 16 deformations, the simultaneous wrapped dimensions nine
against the original rational earring and 31 against $\mathcal E_*$ occur only for
$b=b_{S_{18}}$ and $b=b_{S_{25}}$.

The selector does not reach further.  The connector--main subcensus consists of 46 occurrences and
38 distinct switches.  Every one of the $\binom{38}{2}=703$ sums of two distinct switches in this
subcensus is Maurer--Cartan.  The 38 elements are linearly independent, so all $2^{38}$ elements of
their $\F_2$-span are distinct and Maurer--Cartan.  Of the two-switch sums, 41 also have
wrapped-dimension pair $(9,31)$.  None of these 41 presentations is carried to the
$S_{18}$ presentation by an idempotent-preserving permutation of the 31 generators.
\end{proposition}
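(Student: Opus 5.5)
The statement to prove is Proposition~\ref{prop:aux-span}, and it splits into a Maurer--Cartan part and a rank part, which I would treat separately. For the Maurer--Cartan part, write $b=\sum\epsilon_s b_s$. Each $b_s$ satisfies $D_N(b_s)=0$ and $b_s^2=0$ by Proposition~\ref{prop:q7-typeD}, and $D_N$ is linear. So over $\F_2$,
\[
 (\delta_N+b)^2=\sum_s\epsilon_s\bigl(D_N(b_s)+b_s^2\bigr)+\sum_{s<s'}\epsilon_s\epsilon_{s'}\,[b_s,b_{s'}],
\]
and it suffices to show that every mixed anticommutator $b_sb_{s'}+b_{s'}b_s$ vanishes. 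Each $b_s$ has four arrows, all labelled by a single face letter $R$ or $L$. Products with different face labels are zero, so $[b_{S_{18}},b_{S_{25}}]$ is zero (an $R$ switch against an $L$ switch), and so are the other $R$/$L$ pairs. That leaves two same-letter pairs. For $S_{18}$ and $S_{69}$, both labelled $R$, I would check from the table that no target of one is a source of the other. For $S_{25}$ and $S_{74}$, both labelled $L$, the arrows share endpoints $20,21$. Here I would check that no composable pair exists: the $\iota$-labels force $L$ arrows to join $\iota_\circ$ generators, and an $L$ arrow into $21$ followed by one out of $21$ does not occur. Any products that do survive should cancel in pairs.

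The connector--main claim follows the same pattern. For the $\binom{38}{2}$ pairs I would reduce to the finite check that, for two four-arrow switches $b,b'$, the anticommutator $bb'+b'b$ vanishes. Each product is a sum of at most $16$ path products, and a product is nonzero only when the face letters agree and the target of one arrow equals the source of the other. Enumerating these over the census table is the routine part. Linear independence of the 38 elements is a rank computation on their supports in the $31\times31$ arrow-position basis. It then gives $2^{38}$ distinct elements, and the Maurer--Cartan property of the whole span follows from the pairwise identity by the displayed expansion.

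For the rank part, take the 16 deformations. For each one I would encode $(N,\delta_N+b)$ and compute the wrapped dimensions against $\mathcal E$ and $\mathcal E_*$ via Lemma~\ref{lem:wrapped-reduction}, computing $e$, $\tau$ and $r$ each time. Note that $e$ does not depend on $b$, since the identity part depends only on the idempotent modules. The singletons are covered by Propositions~\ref{prop:q7-typeD} and~\ref{prop:aux-closure}, and $b=0$ gives $7$. That leaves the 11 sums of two or more switches, each a finite rank computation. The 41 two-switch sums with pair $(9,31)$ come from the same loop over the 703 sums.

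The last claim, that none of these 41 is carried to the $S_{18}$ presentation by an idempotent-preserving permutation, I expect to be the main obstacle, because a naive search over permutations of 31 generators is infeasible. I would first compare permutation invariants: the multiset of (source idempotent, target idempotent, label) triples, and the in- and out-degree profiles. The label multiset is preserved by permutation, so any differing count settles a case immediately. For the cases that survive, I would look for an isomorphism of the labelled digraphs directly, by canonical labelling or by backtracking that propagates from the unique $M^2$ arrows. These form a sparse rigid skeleton, so the search should terminate quickly.
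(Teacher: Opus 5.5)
Your proposal is correct and follows essentially the same route as the paper. Both arguments reduce the span statements to closedness, square-zero and vanishing mixed anticommutators, check linear independence in the arrow basis, compute the sixteen and the $703$ rank pairs with Lemma~\ref{lem:wrapped-reduction}, and settle the last claim by an exhaustive isomorphism search on the labelled digraphs, preserving idempotents, face letters and word lengths. The one difference is that for the four named switches you verify the anticommutators by hand: the $R$/$L$ pairs live on different idempotents, and the same-letter pairs $S_{18},S_{69}$ and $S_{25},S_{74}$ have no composable arrows. The paper instead just reports direct multiplication of all sixteen sums.
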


\begin{corollary}[Conditional selection]\label{cor:aux-conditional}
Assume the CHKK instanton--pillowcase correspondence and Hypothesis~\ref{hyp:q7-support}.  Then the
object assigned to $Q_{1/3}+Q_{1/7}$ is homotopy equivalent to the common $S_{18}/S_{25}$ class.
\end{corollary}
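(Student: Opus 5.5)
The plan is to combine the correspondence with the two instanton computations, and then use Hypothesis~\ref{hyp:q7-support} to reduce the problem to a finite list that Propositions~\ref{prop:aux-selection} and~\ref{prop:aux-span} have already searched.

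First I fix the two closures. The earring tangle $\Qh{-1/2}$ closes the Conway sum $Q_{1/3}+Q_{1/7}$ to $K_7$, and Theorem~\ref{thm:inat} gives $\dim_{\F_2}\Inat(K_7;\F_2)=9$. The earring $\widehat Q_{-3/4}$ closes the same sum to $K_*$, and Proposition~\ref{prop:aux-closure} gives $\dim_{\F_2}\Inat(K_*;\F_2)=31$.

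Next I apply the assumed CHKK correspondence to both decompositions. It says the $\F_2$-dimension of each instanton group equals the wrapped morphism homology between the earring object and the object $\mathcal N$ assigned to the Conway sum. For the earrings I need their objects to be the undeformed complexes $\mathcal E$ and $\mathcal E_*$. For this I invoke Smith's Lemma~5.3, quoted in the introduction, which forces the cochain on a rational earring tangle to vanish. I also use Proposition~\ref{prop:smith-main}, which excludes auxiliary circle components from the pairing. The conclusion is
\[
\dim H\operatorname{Mor}(\mathcal E,\mathcal N)=9,\qquad \dim H\operatorname{Mor}(\mathcal E_*,\mathcal N)=31.
\]
These dimensions are invariant under homotopy equivalence in $\operatorname{Tw}(\mathcal B)$, so the same two numbers hold for any representative of the class of $\mathcal N$.

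Then Hypothesis~\ref{hyp:q7-support} puts $\mathcal N$ in one of two finite families. The first is $(N,b_s)$ for one of the 45 generator-preserving singleton switches. By Proposition~\ref{prop:aux-selection}, among all 73 occurrences only $S_{18}$ and $S_{25}$ realize the pair $(9,31)$. The second is $(N,b)$ with $b$ in the 16-element span. By Proposition~\ref{prop:aux-span}, only $b_{S_{18}}$ and $b_{S_{25}}$ realize the pair there. In both cases $\mathcal N$ is homotopy equivalent to $(N,b_{S_{18}})$ or to $(N,b_{S_{25}})$. By Corollary~\ref{cor:q7-classes} these two objects are strictly isomorphic under the permutation $(4\ 22)(5\ 21)$, so they form a single class, which is the claim.

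The step I expect to be the main obstacle is the second one. It is not automatic that the correspondence, applied to two different closures, uses the \emph{same} object for $Q_{1/3}+Q_{1/7}$ and the undeformed earring objects. One must read CHKK's Conjecture~D as a functorial assignment per tangle, independent of the complementary tangle. One must also check that the type-D encodings of the PL earring curves represent Herald--Kirk's earring objects up to homotopy equivalence. If that identification is not available, I would add it explicitly as part of the assumed correspondence rather than try to prove it.
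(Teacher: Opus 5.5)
Your proposal is correct and follows the paper's own argument. The two squeezes give $9$ and $31$. Under the assumed correspondence, Smith's Lemma~5.3 makes the rational-earring cochains vanish. The censuses of Propositions~\ref{prop:aux-selection} and~\ref{prop:aux-span}, together with the strict isomorphism of Corollary~\ref{cor:q7-classes}, then leave only the $S_{18}/S_{25}$ class.

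The paper does not prove the identifications you flag either: a per-tangle assignment that does not depend on the closure, and the PL earring encodings representing the earring objects. It absorbs both into the phrase ``the stated correspondence.'' Stating them explicitly as part of the assumption is therefore a refinement, not a gap.
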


\begin{proof}[Proof of Propositions~\ref{prop:aux-closure}--\ref{prop:aux-span} and
Corollary~\ref{cor:aux-conditional}]
The knot $K_*$ has the 14-crossing planar diagram
\[
\begin{aligned}
&(24,13,25,14),\ (10,25,11,26),\ (26,11,27,12),\ (12,27,13,0),\ (7,14,8,15),\\
&(15,8,16,9),\ (9,16,10,17),\ (23,6,24,7),\ (5,22,6,23),\ (21,4,22,5),\\
&(3,20,4,21),\ (19,2,20,3),\ (1,18,2,19),\ (17,0,18,1),
\end{aligned}
\]
in the usual $PD$ convention, so the calculation below can be repeated in any standard package.
Seifert's algorithm applied to it gives
\[
\begin{aligned}
 \Delta_{K_*}(t)={}&t^{-6}-t^{-5}+2t^{-3}-4t^{-2}+5t^{-1}-5\\
 &+5t-4t^2+2t^3-t^5+t^6.
\end{aligned}
\]
Thus $\ell(K_*)=31$ and $|\Delta_{K_*}(-1)|=23$.  Over $\F_2$, the reduced Khovanov dimensions in
homological degrees zero through 13 are
\[
 (1,0,1,1,1,2,3,3,4,5,4,3,2,1),
\]
whose sum is $31$; this is a finite computation from the diagram above, reproducible in any Khovanov package.  The Alexander lower bound, universal coefficients, and the reduced
Khovanov-to-instanton spectral sequence therefore give
\[
 31=\ell(K_*)
 \le \dim_{\Q}I^\natural(K_*;\Q)
 \le \dim_{\F_2}I^\natural(K_*;\F_2)
 \le \dim_{\F_2}\widetilde{Kh}(K_*;\F_2)=31.
\]

For the algebraic pairing, the reduction data $(e,\tau,r,e+\tau-2r)$ of
Equation~\eqref{eq:wrapped-reduction} are
\[
\begin{array}{c|rrrr}
\text{blue class} & e&\tau&r&e+\tau-2r\\ \hline
S_{18}/S_{25}&228&251&224&31\\
S_{69}&228&251&228&23\\
S_{74}&228&251&227&25.
\end{array}
\]
These are all-word computations in $\operatorname{Tw}(\mathcal B)$.  Under the stated
correspondence, the rational-earring cochains vanish \cite[Lem.~5.3]{Smith}.

For the exhaustive local census, retaining the provenance of each segment partitions the
82 self-intersection orbits into 52 connector--main and 30 connector--connector orbits.  The
alternate equivariant smoothing is encoded directly at every orbit.  The nine nodes
\[
 S_7,S_8,S_{23},S_{24},S_{49},S_{52},S_{53},S_{61},S_{73}
\]
change the cancelled module.  At every other node the symmetric difference of the two
differentials consists of two old and two new arrows.  Exact path multiplication gives
$D_N(b_s)=b_s^2=0$ in every case.  Duplicate node presentations leave 45 distinct elements,
and expanded in the basis of arrows, the 45 elements are linearly independent.  Applying the
mapping-cone reduction
\eqref{eq:wrapped-reduction} to both rational earrings gives the complete census
\[
\begin{array}{c|rr}
(\dim H\operatorname{Mor}(\mathcal E,(N,b_s)),
  \dim H\operatorname{Mor}(\mathcal E_*,(N,b_s)))
 &\text{self-intersection orbits}&\text{distinct switches}\\ \hline
(5,23)&1&1\\
(5,25)&2&2\\
(7,23)&12&6\\
(7,25)&39&23\\
(7,27)&8&4\\
(9,23)&1&1\\
(9,25)&8&6\\
(9,31)&2&2
\end{array}
\]
The final row consists precisely of $S_{18}$ and $S_{25}$.  For each node, the two arcs between its
preimages determine complementary branch loops.  Choosing the loop with smaller $\ell^1$ deck norm
and forgetting signs gives an additional finite label; $(2,2)$ occurs among the 73
generator-preserving occurrences only at the same two nodes.

Restricting to connector--main nodes recovers 46 generator-preserving occurrences and
38 distinct switches.  Expanded in the basis of arrows, those 38 are linearly independent.  For
each unordered pair of these distinct switch elements, exact multiplication verifies the
Maurer--Cartan equation for their sum.  Since the individual elements are closed and square-zero,
the pair checks say exactly that all mixed anticommutators vanish.  Every one of their $2^{38}$ sums
is therefore Maurer--Cartan.
Applying the same two mapping-cone reductions to all $\binom{38}{2}=703$ two-switch sums gives
41 sums with rank pair $(9,31)$.  Thus the two
closure dimensions select the singleton class but do not select within the unrestricted span of
local switches.  For a presentation-level comparison, regard each differential as a directed graph
on the 31 generators whose vertices carry their idempotents and whose edges retain their face and
path length.  An exhaustive search over the bijections preserving all of that data finds none
carrying the $S_{18}$ differential to any of the 41; the same search does find the
$S_{18}$--$S_{25}$ permutation, which is known independently.  This distinguishes strict
presentations only; it is not a claim that the 41 objects are pairwise inequivalent up to homotopy.

Direct multiplication also shows that every sum of the four named switches satisfies the
Maurer--Cartan equation.  Of the 16 deformations, eight have original-closure dimension nine;
their auxiliary dimensions are
\[
\begin{array}{c|c}
b&\dim H\operatorname{Mor}(\mathcal E_*,(N,b))\\ \hline
b_{S_{18}},\ b_{S_{25}}&31\\
\substack{b_{S_{69}},\ b_{S_{18}}+b_{S_{25}}+b_{S_{69}},\\
 b_{S_{25}}+b_{S_{69}}+b_{S_{74}},\\
 b_{S_{18}}+b_{S_{25}}+b_{S_{69}}+b_{S_{74}}}&23\\
b_{S_{74}},\ b_{S_{18}}+b_{S_{74}}&25.
\end{array}
\]
The other eight sums have original-closure dimension seven.  Pairing the assigned object with the
original and auxiliary rational earrings must recover dimensions nine and 31, respectively.
In either of the two stated localization classes, the displayed censuses leave only $b_{S_{18}}$
and $b_{S_{25}}$, whose objects are strictly isomorphic by Corollary~\ref{cor:q7-classes}.
\end{proof}

\begin{remark}\label{rem:q7-scope}
Proposition~\ref{prop:q7-typeD} proves existence of four named Maurer--Cartan elements, and
Proposition~\ref{prop:aux-selection} proves the exhaustive singleton census in the stated geometric
class; neither proposition proves selection by the tangle.  Indeed, the ungraded delta-degree
$-1$ endomorphism cohomology of $N$ is much larger than its one-dimensional auxiliary-degree-zero
part.  Corollary~\ref{cor:q7-classes} shows concretely that the wrapped-rank match leaves three
different homotopy-equivalence classes.  The auxiliary closure strengthens selection both to every
generator-preserving singleton smoothing at any PL node and to all 16 sums of the four named
switches, but only after assuming the CHKK correspondence and that its assigned object is represented
in one of these stated classes.  The algebraic calculation proves neither assumption.
The restriction on support is essential: Proposition~\ref{prop:aux-span} exhibits 41
two-switch sums outside the singleton statement with the same two closure dimensions.
\end{remark}

\subsection{A two-generator Yoneda reduction}
There is a formal recognition alternative to identifying the virtual chain of every figure-eight
bubble, although the available wrapped representability theorem will not apply below.  In the
compact setting, Fukaya proves that an unobstructed immersed Lagrangian correspondence with clean
geometric composition produces a canonical gauge-equivalence class of bounding cochain on the
composition, and that the resulting object represents the quilted correspondence module
\cite[Thms.~6.3 and~7.3]{FukayaCorr}.  Gao proves the corresponding representability statement for
properly embedded exact cylindrical correspondences between Liouville manifolds
\cite[Thm.~7.24 and Cor.~7.25]{Gao}, on the unobstructedness input of \cite[Thm.~7.12]{Gao}.  These theorems obtain the output object by a cyclic-element and
Yoneda argument; they do not require the still-open identification of its cochain with a virtual
chain of figure-eight bubbles.  Proposition~\ref{prop:partial-triple}, however, shows that Gao's
embedded-correspondence hypothesis fails in the present geometry.

Let $\mathcal C\subset\mathcal W(P^*)$ be the CHKK full subcategory of unobstructed objects missing
the top-left puncture, let $G=a^\circ\oplus a^\bullet$, and let
$\mathcal B=\operatorname{End}_{\mathcal C}(G)$.  Thus $G$ split-generates $\mathcal C$ and
$\mathcal C\simeq\operatorname{Tw}(\mathcal B)$ \cite[\S11.5]{CHKK}.

\begin{proposition}[Two-generator Yoneda recognition]\label{prop:yoneda-recognition}
Assume that the quilted module
\[
 \mathcal M_{F_{1/3,t}}(A_{1/7})
\]
is defined and is represented by an object $X_{1/3,1/7}\in\mathcal C$.  If there is an
$A_\infty$ quasi-isomorphism of right
$\mathcal B$-modules
\[
 \mathcal M_{F_{1/3,t}}(A_{1/7})\big|_{\mathcal B}
 \simeq
 \operatorname{Mor}_{\operatorname{Tw}(\mathcal B)}
       \bigl(G,(N,b_{S_{18}})\bigr),
\]
then
\[
 X_{1/3,1/7}\simeq (N,b_{S_{18}})\simeq(N,b_{S_{25}})
 \qquad\text{in }\mathcal C.
\]
It is enough to specify the module on the two objects $a^\circ,a^\bullet$, but this means the full
chain-level complexes together with every $\mathcal B$-action and higher module map.  Homology
dimensions of pairings with two test objects do not imply the conclusion.
\end{proposition}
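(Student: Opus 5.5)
The plan is to pass through the Yoneda embedding of $\mathcal C$ into right $A_\infty$-modules over $\mathcal B$. The first step is to record that, since $G=a^\circ\oplus a^\bullet$ split-generates $\mathcal C$ and $\mathcal C\simeq\operatorname{Tw}(\mathcal B)$ \cite[\S11.5]{CHKK}, the restricted Yoneda functor
\[
 Y\colon \mathcal C\longrightarrow \operatorname{Mod}_{\mathcal B},\qquad Y(Z)=\operatorname{Mor}_{\mathcal C}(G,Z),
\]
is cohomologically fully faithful. The standard argument runs as follows. For $Z=G$ this is the Yoneda lemma, because $Y(G)=\mathcal B$ is the free module. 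Both sides of $H\operatorname{Mor}(Z,Z')\to H\operatorname{Hom}_{\mathcal B}(YZ,YZ')$ are exact functors in each variable and commute with direct summands. So the class of pairs on which the map is an isomorphism is closed under cones, shifts and summands. That class contains $(a^\circ,a^\bullet)$ in every combination, and hence it contains all of $\operatorname{Tw}(\mathcal B)$ and its split closure.

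Next I use the representability hypothesis. Since $X_{1/3,1/7}$ represents the quilted module, we have $Y(X_{1/3,1/7})\simeq\mathcal M_{F_{1/3,t}}(A_{1/7})|_{\mathcal B}$. Composing with the assumed $A_\infty$ quasi-isomorphism gives a module quasi-isomorphism
\[
 \phi\colon Y(X_{1/3,1/7})\to Y\bigl((N,b_{S_{18}})\bigr).
\]
By full faithfulness, the class $[\phi]$ is the image of a unique class $f\in H^0\operatorname{Mor}_{\mathcal C}(X_{1/3,1/7},(N,b_{S_{18}}))$. Applying the same reasoning to a homotopy inverse $\psi$ of $\phi$ gives a class $g$. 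Then $Y(g\circ f)=[\psi\circ\phi]=[\mathrm{id}]$. Faithfulness on the endomorphism spaces of the two objects forces $g\circ f=\mathrm{id}$ in cohomology, and likewise $f\circ g=\mathrm{id}$. So $f$ is a homotopy equivalence in $\mathcal C$. The final identification $(N,b_{S_{18}})\simeq(N,b_{S_{25}})$ is not new: it is the strict isomorphism of Corollary~\ref{cor:q7-classes}, namely the permutation $(4\ 22)(5\ 21)$.

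I expect the main obstacle to be bookkeeping rather than any new idea. Three points need care. First, the module on the left is given only as an $A_\infty$-module, whereas $\mathcal B$ is associative; morphisms must therefore be taken in the $A_\infty$ (bar-resolved) module category, where quasi-isomorphisms are invertible up to homotopy. Second, in the wrapped, ungraded setting, the CHKK equivalence has to be applied over $\F_2$ without gradings, and it must include the split-closure. I would first check that \cite[\S11.5]{CHKK} already supplies the statement that $G$ generates $\mathcal C$, so that no idempotent completion is needed beyond $\operatorname{Tw}(\mathcal B)$. Third, one must confirm that a Maurer--Cartan twist $(N,b_{S_{18}})$ is a genuine object of $\operatorname{Tw}(\mathcal B)$. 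That holds by Proposition~\ref{prop:q7-typeD}, since $(\delta_N+b_{S_{18}})^2=0$.

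The closing caveat is justified by the same analysis. The conclusion rests on the full module structure, that is, the whole functor $Y$ on $a^\circ$ and $a^\bullet$ together with its actions. By contrast, Corollary~\ref{cor:q7-classes} and Proposition~\ref{prop:aux-span} exhibit homotopy-inequivalent objects whose wrapped dimensions against test objects agree. Matching homology ranks therefore cannot replace the quasi-isomorphism hypothesis.
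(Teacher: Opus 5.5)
Your argument is the paper's own: split generation by $G$ makes the restricted Yoneda functor cohomologically fully faithful; representability identifies $Y(X_{1/3,1/7})$ with the quilted module; the assumed quasi-isomorphism then lifts to a homotopy equivalence; and Corollary~\ref{cor:q7-classes} supplies the strict isomorphism $(N,b_{S_{18}})\cong(N,b_{S_{25}})$ (you merely spell out the d\'evissage and the two-sided inverse step that the paper takes as standard). For the closing caveat, note two limits: Corollary~\ref{cor:q7-classes} only shows that a single closure fails to separate classes, since against $\mathcal E_*$ the three classes give $31$, $23$, $25$; and the $41$ collisions of Proposition~\ref{prop:aux-span} are established there only as strictly non-isomorphic presentations, not as homotopy-inequivalent objects, so that step is illustrative rather than a proof, exactly as in the paper's own one-line justification.
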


\begin{proof}
The $A_\infty$ Yoneda embedding is cohomologically fully faithful.  Since $G$ split-generates
$\mathcal C$, restricted Yoneda is cohomologically fully faithful on $\mathcal C$, and its
split-closed triangulated envelope is the category of perfect $\mathcal B$-modules.
Representability identifies the first module in the display with the Yoneda module of
$X_{1/3,1/7}$.  The assumed module quasi-isomorphism is consequently the Yoneda image of a
homotopy equivalence
$X_{1/3,1/7}\simeq(N,b_{S_{18}})$.  The second equivalence is the strict isomorphism of
Corollary~\ref{cor:q7-classes}.

The last assertion records what restriction to the two-object category $\mathcal B$ entails.  The
closure dimensions in Proposition~\ref{prop:aux-closure} retain only the dimensions of two module
homologies.  Its 41 two-switch collisions show directly that those dimensions do not recover
the module or the represented object.
\end{proof}

\section{Finite polygon counts, and what a genuine deformation would require}\label{sec:combinatorial}

\subsection{A finite winding test}\label{ss:polygons}
For two transverse embedded loops on an oriented surface, de~Silva--Robbin--Salamon identify
holomorphic strips with combinatorial lunes under precise hypotheses \cite{dSRS}.  Those hypotheses
do not cover the present calculation: the curves are immersed, the pillowcase has orbifold points,
and a bounding-cochain deformation uses higher polygons.  Accordingly, this section defines only a
finite combinatorial test, and records what it does not decide.

For intersection points $x,y$, choose an arc $\alpha$ on red from $x$ to $y$ and an arc $\beta$ on
blue from $y$ to $x$, and lift the loop $\Gamma=\alpha\cdot\bar\beta$ to the torus.  The pair is
accepted as a candidate bigon when (a) the lifted loop is null-homologous; (b) its winding function
is zero at the four pillowcase corners; (c) that function is nonnegative on the sampled complementary
regions; and (d) the local sectors at $x$ and $y$ are convex.  The same conditions at every vertex
define the candidate triangle and quadrilateral tables.  We refer to the conjunction of these
conditions as the \emph{low-order test}, and we call the set of self-intersections at which the
tabulated windows contain a complete polygon the \emph{active set}.  Both are finite combinatorial
conditions, not a theorem that all and only analytic polygons have been counted.

The matrices use one fixed lift of the red curve and sum over the two torus preimages of a blue
self-intersection; the crossings carry the enumeration fixed by that construction, and it is this
enumeration that the positions quoted in Section~\ref{sec:compute} refer to.  Candidate arcs are
restricted by explicit edge-index windows.  The triangle count, too, uses finite blue and red
spans.  No geometric
bound proves these windows exhaustive, and the bigon count considers only single traversals of the
chosen arcs.  Multiple-wrapping lunes and polygons are not enumerated.  These limitations are why
the outputs in Section~\ref{sec:compute} are finite tables rather than Fukaya operations.

For the decomposition \eqref{eq:family} take $T_1=\Qh{-1/2}$ and
$T_2=Q_{1/3}+Q_{1/q}$.  Section~\ref{sec:setup} constructs explicit piecewise-linear curves
$R_{\mathrm{PL}}$ and $L_{q,\mathrm{PL}}$ from Smith's fiber-product and resolution description.
For the reported perturbation parameters, let $C_q$ be the $\F_2$-vector space generated by their
transverse intersections, and let $D_q^{\mathrm{big}}$ be the matrix of the winding-number bigon
count defined in Section~\ref{ss:polygons}.  Define
\begin{equation}\label{eq:big-stat}
 h_q^{\mathrm{big}}
 :=\dim_{\F_2}C_q-2\rk_{\F_2}D_q^{\mathrm{big}}.
\end{equation}
We call this the \emph{bigon-rank statistic}.  Equation \eqref{eq:big-stat} is a definition for a
finite matrix.  It does not, by itself, assert that $D_q^{\mathrm{big}}$ is the differential of an
analytic or invariant Floer complex.

\subsection{What the full deformation would require}\label{ss:deformed}
At a transverse self-intersection $p$ of an immersed curve, immersed Floer theory has two ordered
branch-jump generators, one for each ordering of the preimages; in a graded setup their degrees are
complementary \cite{AkahoJoyce}.  A geometric double point without an ordering is therefore not an
algebraic generator.  If $b$ is a degree-one element in an appropriate completed curved $A_\infty$ algebra,
the Maurer--Cartan equation is the full series \eqref{eq:MC}.  For two objects carrying cochains
$b_0,b_1$, the deformed morphism differential has the form
\begin{equation}\label{eq:defdiff}
 d_{b_1,b_0}(x)=
 \sum_{r,s\ge0}\mu^{r+s+1}(b_1^{\otimes r},x,b_0^{\otimes s}),
\end{equation}
up to the convention for the order of boundary inputs.  Thus both formulas contain ordered words,
allow repeated occurrences of the same branch jump, and have no a priori upper bound on word
length.  A positive Novikov filtration or a geometric nilpotence statement is also needed to define
the infinite sums.

The finite tables below are coarser.  They label the underlying double point rather than an
ordered branch jump.  Its displayed deformation includes bigons, triangles, and quadrilaterals
through distinct selected crossings, stored as sets; the accompanying Maurer--Cartan test uses the
tabulated monogon and self-bigon terms and selected distinct-input self-triangles.  In
particular, it omits repeated terms and all uncontrolled higher words.  An empty monogon
table is a finite output, not a proof that $\mu^0$ vanishes.  Nor does a degree argument dispose of
the question: without a proved grading and phase bound, self-intersection generators need not be
confined to degrees zero and one.

For genuine solutions of the full Maurer--Cartan equation, the curved $A_\infty$ relations imply
$d_{b_1,b_0}^2=0$.  Therefore square-zero is a necessary check on any proposed finite
matrix.  It is not sufficient: a square-zero truncation may still omit terms in \eqref{eq:defdiff}.

\section{Finite candidate supports at $q=5,7,11$}\label{sec:compute}

\begin{computation}[Finite bigon-rank pattern]\label{comp:law}
For the specified representatives and parameters, the counts are
\[
 h_5^{\mathrm{big}}=5,\qquad h_7^{\mathrm{big}}=7,\qquad
 h_{11}^{\mathrm{big}}=15,\qquad h_{13}^{\mathrm{big}}=17.
\]
The raw bigon matrices checked at $q=5,7,11,13$ square to zero over $\F_2$.  The value $5$ at $q=3$ in
Table~\ref{tab:family} is instead the published computation of \cite[\S11.6]{HHK2}, made from a
different tangle presentation; it is not an output of this uniform piecewise-linear family.
\end{computation}

\begin{table}[ht]
\centering
\small
\begin{tabular}{c|ccccc}
 $q$ & $\det K_q$ & bigon statistic & $\dim_{\F_2}\Inat$ & difference & required matrix-rank change \\ \hline
 $3$ & $3$ & $5$ (external) & $5$ & $0$ & $0$ \\
 $5$ & $1$ & $5$ & $7$ & $-2$ & $-1$ \\
 $7$ & $1$ & $7$ & $9$ & $-2$ & $-1$ \\
 $11$ & $5$ & $15$ & $13$ & $+2$ & $+1$ \\
 $13$ & $7$ & $17$ & $15$ & $+2$ & $+1$ \\
\end{tabular}
\medskip
\caption{A finite numerical pattern.  The instanton column is unconditional by
Theorem~\ref{thm:inat}.  The bigon-statistic column for $q\ge5$ concerns the specified finite
piecewise-linear matrices; the $q=3$ entry is external.  The last column is the change in matrix rank
that would be required if a genuine deformed differential existed on the same generator module.}
\label{tab:family}
\end{table}

At these five displayed values the numerical differences satisfy
\[
 h_q^{\mathrm{big}}-(q+2)=2\,\sgn\bigl(|q-6|-3\bigr),
\]
where the left side at $q=3$ uses the external entry.  This is a finite-sample observation, not a
theorem or conjectural identification of $h_q^{\mathrm{big}}$ with Floer homology.

The matrix-rank comparison suggests looking for self-intersections at which low-order polygon terms
change the rank of $D_q^{\mathrm{big}}$ by one.  We call a finite set of self-intersections a
\emph{support}, by analogy with the set on which a cochain would be nonzero.  This search must be distinguished from solving the
full Maurer--Cartan equation.  For a finite set $S$ of self-intersections, define
$D_q^{\mathrm{tab}}(S)$ by adding to $D_q^{\mathrm{big}}$ the tabulated triangle terms and the
tabulated quadrilateral terms through \emph{distinct} crossings of $S$, and record
\begin{equation}\label{eq:tab-stat}
 h_q^{\mathrm{tab}}(S):=\dim_{\F_2}C_q-2\rk_{\F_2}D_q^{\mathrm{tab}}(S).
\end{equation}
Repeated insertions and unrestricted higher operations are absent from this definition.

\begin{computation}[Low-order candidate supports]\label{comp:cochains}
The finite search gives the following outputs.
\begin{enumerate}
\item[(i)] At $q=5$, among the singleton and two-element subsets of the active set of
Section~\ref{ss:polygons},
the unique smallest support with $h_5^{\mathrm{tab}}=7$ is $S=\{s_A,s_B\}$, where
$(\gamma,\theta)\approx(0.03,1.27)$ and $(3.06,4.98)$.  One tabulated quadrilateral changes the
$(x_4,y_6)$ bigon entry from $0$ to $1$.  The resulting matrix has the sole entry $(x_1,y_0)$ and squares to zero.
\item[(ii)] At $q=7$, among the tested singleton crossings passing the low-order test of
Section~\ref{ss:polygons},
one crossing near $(0.05,5.41)$ has $h_7^{\mathrm{tab}}=9$.  Its tabulated triangle changes one bigon entry
from $1$ to $0$.  The resulting matrix has entries $(6,4)$ and $(7,5)$ and squares to zero.
\item[(iii)] At the default $q=11$ perturbation, 55 singleton crossings pass the
low-order test and have matrix rank three.  Exactly 52 of the resulting
matrices square to zero.  Three of the 55 do not: their squares
have respective nonzero entries $(9,15)$, $(9,14)$, and $(0,14)$.  At the independent parameter
choice $(\varepsilon_{\rm blue},\varepsilon_{\rm red},r_{\rm red})=(0.03,0.10,0.25)$, the same
test returns 56 singleton candidates of the target rank; 14 matrices fail square-zero,
leaving 42 that pass this necessary condition.
\end{enumerate}
\end{computation}

A geometric double point gives two ordered
branch-jump generators; in a graded immersed setup their degrees are complementary.  The original
search stores only the underlying geometric crossing.  Moreover, the test uses only the tabulated
monogon, self-bigon, and distinct-crossing self-triangle terms.  It does not test the full series \eqref{eq:MC}.  Therefore
Computation~\ref{comp:cochains} proves neither existence nor uniqueness of a bounding cochain.

\subsection{$q=5$: a quadrilateral in the finite table}\label{ss:b5}
The raw bigon matrix has rank two.  Within the active set, no singleton and exactly
one tested two-element support lowers the tabulated matrix rank to one.  It consists of the crossings
$s_A,s_B$ in Figure~\ref{fig:pillow}.  The accepted quadrilateral changes the $(x_4,y_6)$ entry, so
the tabulated matrix retains only $(x_1,y_0)$, squares to zero, and has
$h_5^{\mathrm{tab}}=9-2=7$.  This proves the finite statement in
Computation~\ref{comp:cochains}(i), including minimality only within the stated active-set search.
It does not prove that the two unoriented crossings define a degree-one element, solve the full
Maurer--Cartan equation, or give the full differential \eqref{eq:defdiff}.

\subsection{$q=7$: a triangle in the finite table}\label{ss:b7}
The raw bigon matrix has rank three.  At the default parameters, one tested singleton near
$(0.042,5.405)$ passes the low-order test and its accepted triangle lowers the
tabulated rank to two.  The resulting entries are $(6,4)$ and $(7,5)$; the matrix squares to zero and
$h_7^{\mathrm{tab}}=13-4=9$.  At the second parameter choice the corresponding crossing is near
$(0.058,5.413)$ and again gives a rank-two square-zero table, although its generator labels and
entries change.  This is the finite statement in Computation~\ref{comp:cochains}(ii), not a full
Maurer--Cartan calculation.  The crossing is $S_{69}$ in the labeling of
Section~\ref{sec:typeD}.  Its type-D deformation is the four-arrow switch $b_{S_{69}}$, not the
single untyped support used by this finite table.

\subsection{$q=11$: three tables fail the square-zero condition}\label{ss:b11}
At the default parameters $(\varepsilon_{\rm blue},\varepsilon_{\rm red},r_{\rm red})=(0.05,0.16,0.40)$
the raw bigon matrix has 19 generators and rank two.  The
unoriented, low-order test returns 55 singleton supports whose tabulated matrices have rank
three.  Three of the 55 fail the necessary square-zero condition, with the nonzero square entries listed in
Computation~\ref{comp:cochains}(iii).  So 55 is not a count of
differentials or of bounding cochains, even inside the truncation.

The count is also parameter-dependent.  At
$(\varepsilon_{\rm blue},\varepsilon_{\rm red},r_{\rm red})=(0.03,0.10,0.25)$, the raw matrix has
23 generators and rank four, and the same low-order test returns 56 target-rank
singletons.  Of their matrices, 14 fail square-zero, leaving 42 that pass this necessary
test.  Neither 52 at the default parameters nor 42 at the second parameters is a count
of bounding cochains: none has been checked in the typed, full $A_\infty$ algebra.

\subsection{Smoothing evidence at $q=5,7$}
The ancillary smoothing computation resolves both local sectors at the selected $q=5$ and $q=7$
nodes and recounts accepted bigons.  For one sector choice it reproduces the corresponding tabulated
matrices entry by entry.  At $q=5$ this remains a finite single-traversal test.  At $q=7$,
Proposition~\ref{prop:q7-typeD} replaces the smoothing experiment by a full algebraic calculation:
the direct smoothing is encoded over $\mathcal B$, its four-arrow difference is Maurer--Cartan, and
the wrapped pairing is computed without a word cutoff.

To turn the $q=5$ experiment into a proof one needs a surgery lemma for that immersed curve: after choosing
degree-one ordered branch jumps and smoothing them in disjoint disks, collapsing the smoothing arcs
must give a bijection between \emph{all} bigons of the smoothed pair and \emph{all} polygons in
\eqref{eq:defdiff}, including repeated visits and multiple wrapping.  One must also prove a grading
statement that makes the selected branch jumps degree one and controls every term of the full
Maurer--Cartan series.  No theorem cited here supplies those assertions for the $q=5$ curve in the
punctured orbifold pillowcase, and the present enumerator does not prove them.  At $q=7$ the type-D
calculation avoids this surgery lemma.  Corollary~\ref{cor:aux-conditional} conditionally selects
one of its three homotopy classes, but Remark~\ref{rem:q7-scope} still prevents identifying that
class with the tangle's conjectural bounding cochain unconditionally.

\section{Discussion}\label{sec:disc}

\subsection{The finite pattern}
For the five displayed rows, including the external $q=3$ row, the numerical difference between the
bigon statistic and the instanton dimension is governed by $\det K_q\gtrless3$.  This observation has
only four outputs of the uniform program behind it and is not evidence that either finite matrix is
an invariant.  In particular, one cannot presently infer a family-wide ``one differential'' law or
a theorem about the size of a bounding cochain from Table~\ref{tab:family}.  A meaningful next step
would be to construct a genuine filtered immersed Floer complex for every admissible $q$ and only
then ask whether its undeformed rank has this determinant dependence.

\subsection{Relation to prior examples}
CHKK identify a candidate correction on the earring tangle of $T(4,5)$ and explain the expected
higher-term behavior \cite[\S10]{CHKK}; their discussion does not supply a general all-orders theorem
for the present curves.  Proposition~\ref{prop:q7-typeD} supplies an all-orders algebraic calculation
for the four named $q=7$ smoothings, but not the conjectural tangle assignment.
Proposition~\ref{prop:aux-closure} uses a second closure, and Corollary~\ref{cor:aux-conditional}
conditionally selects the common
$S_{18}/S_{25}$ class from every generator-preserving singleton smoothing at any PL node and from
the sixteen-element span of the four named switches.  Smith proves,
conditional on the CHKK correspondence, that a nonzero
cochain is required on the Conway-sum side of $P(-2,3,5)$ \cite[Thm.~5.9]{Smith}, but does not identify
it.  The $q=5$ tabulated quadrilateral is compatible with that picture but does not establish a full
correction.  The $q=11$ finite tables cannot support an analogous creation claim: three of them
fail square-zero, the surviving count is parameter-dependent, and the full deformation is unknown.

\subsection{The remaining $q=7$ localization statement}
Proposition~\ref{prop:smith-main} identifies the underlying analytic main immersion with
$L_{7,\mathrm{PL}}$ up to regular homotopy, and Section~\ref{sec:typeD} verifies the type-D equations
without a polygon cutoff.  The remaining assertion needed by the closure argument is exactly
Hypothesis~\ref{hyp:q7-support}.  Corollary~\ref{cor:aux-conditional} proves that the CHKK
correspondence and this hypothesis together force the common $S_{18}/S_{25}$ class.

A direct proof of the local-support statement would have to identify the relevant rigid
figure-eight boundary strata, exclude simultaneous multi-node or non-reconnection support, and
intertwine their output with the alternate-reconnection morphisms in the two-arc model.  A
regular homotopy of the underlying immersion alone does not transport this decoration.
Fredholm index alone does not give the required exclusion.  In the Bottman--Wehrheim
strip-shrinking model, the predicted top boundary stratum at zero width permits any positive number
of figure-eight bubbles, and the zero-input correction is a count of Morsified figure-eight bubble
trees rather than single vertices \cite[\S\S4.2--4.4]{BottmanWehrheim}.  Thus zero input and total index zero do not force
singleton support.  Any proof of Hypothesis~\ref{hyp:q7-support} must instead use the special
geometry of the six seam neighborhoods to classify the allowed trees and prove the required
mod-two cancellations.

Proposition~\ref{prop:yoneda-recognition} gives a formal second route which avoids that
classification.  Gao's construction applies only to \emph{admissible} correspondences, that is, to
objects of $\mathcal W(P_2^-\times P_3)$: properly embedded, exact and cylindrical
\cite[Def.~6.3, Def.~7.2 and the standing convention of \S7.1]{Gao}.  The representability theorem
inherits that hypothesis \cite[Thm.~7.24 and Cor.~7.25]{Gao}, as does the unobstructedness statement
it uses as input \cite[Thm.~7.12]{Gao}, which in addition requires the target projection to be
proper.  The \emph{composition} with $A_{1/7}$ is then permitted to be an immersion with transverse
or clean self-intersections \cite[Assumption~7.10]{Gao}.  Lemma~\ref{lem:surface-reduction} proves the Lagrangian
immersion, the graph description off the seam neighborhoods, and properness of the target
projection.  Proposition~\ref{prop:partial-triple} disproves embeddedness of $F_{1/3,t}$ itself.
Thus Gao's theorem cannot be invoked for this correspondence.  The triple point obstructs a second
time if one attempts to enlarge the framework: Gao's immersed theory is stated for self-intersections
that are at most double points \cite[Def.~4.4]{Gao}.  The formal recognition statement remains useful only
after either proving a wrapped representability theorem for immersed correspondences which covers
this triple point, or replacing $F_{1/3,t}$ by an embedded correspondence and proving that the
replacement induces the same quilted module.  Neither statement is currently available.  Fukaya's
compact theorem does not apply verbatim to $P^*$, and passing to the torus double cover does not by
itself repair the problem: the compactified $C_3$ space has conical singularities over the corner
points, while the required equivariant correspondence functor and its descent to the wrapped
pillowcase category have not been constructed \cite[\S11.5]{CHKK}.

After such an immersed representability or replacement theorem, the remaining symplectic
calculation would be the full $\mathcal B$-module comparison in
Proposition~\ref{prop:yoneda-recognition}.  Zhang constructs and
proves uniqueness of a quilt lifted from each prescribed bigon on closed surfaces and explicitly
applies this mechanism to CHKK's Figure~22 \cite{Zhang1,Zhang2}.  These results do not prove that
every quilted module operation arises from such a bigon, do not determine the higher module maps,
and do not supply the punctured no-escape argument.  Thus they provide entries for the proposed
module comparison, not the comparison itself.

Standard immersed-surgery results also do not automatically settle this one-dimensional case.  In
particular, degree alone need not force the Maurer--Cartan series to vanish: the ordered
self-intersections and every possible degree-two output must first be determined in a fixed grading,
and repeated words may remain compatible with that grading.  Section~\ref{sec:typeD} carries out the
finite algebraic route for $q=7$: it encodes the curves in the two-arc model and verifies the
Maurer--Cartan identities directly.  In view of Proposition~\ref{prop:partial-triple}, the presently
shortest symplectic route is therefore the direct proof of Hypothesis~\ref{hyp:q7-support}.  An
immersed wrapped-correspondence theorem plus the module quasi-isomorphism would provide an
alternative.  A separate gauge-theoretic gate would remain: one must identify
that symplectic assignment and its closure pairing with the instanton tangle invariant.  CHKK state
this assignment as Conjecture~D \cite{CHKK}; neither Fukaya's nor Gao's correspondence theorem is an
instanton--pillowcase comparison.

\subsection{Reproducibility}
The repository calculations are pure Python (standard library).  The finite polygon programs are included as
ancillary files with this preprint.  The source-repository program
\path{pillowcase/c3_perturbed.py} verifies the corrected boundary words and the three preimages
in Proposition~\ref{prop:partial-triple}.  The $q=7$ program is
\path{pillowcase/q7_kwz.py}; it prints the type-D encodings, the four Maurer--Cartan checks, the
mapping-cone reductions, the three-radius stability checks, the strict $S_{18}$--$S_{25}$
isomorphism, and the three lifted component-class multisets.  The auxiliary-closure program
\path{pillowcase/q7_closure_probe.py} prints the full sixteen-element two-closure census,
checks every Maurer--Cartan identity, verifies the Alexander polynomial from the recorded Seifert
matrix, and records the 14-crossing planar diagram.  The exhaustive local-smoothing program \path{pillowcase/q7_quilt_census.py} classifies all 82 self-intersection orbits, directly
encodes all 82 singleton reconnections, and prints the seventy-three-row two-closure census
used in Proposition~\ref{prop:aux-selection} when run with \texttt{--all-orbit-census}.  Its final
line also records, rather than assumes, the missing analytic local-support implication; the optional
connector--main two-switch census checks all 703 pair sums and the 41 rank-pair collisions.
The closure probe's optional external check uses
Spherogram~2.4.1 and Khoca~1.5 to regenerate that diagram,
the Seifert matrix, and the reduced Khovanov rank.  The delicate numerical points of
the polygon counts (long edges under mod-$2\pi$ arithmetic, shallow-angle corners, adaptive
sampling) are documented there.  A matrix whose square is nonzero is the differential of no complex, so this check precedes every
displayed statistic; the arithmetic statistic \eqref{eq:tab-stat} remains defined even for
a matrix that fails this test.  The code, the data, and both
papers are also public at
\begin{center}
\url{https://github.com/bwuebben/pillowcase-bounding-cochain}\,.
\end{center}
The curve reconstruction is checked against Smith's $q=5$ intersection and bigon counts; the finite
predicate is exercised on the unlink prototype and synthetic teardrop/lens inputs; and the raw
bigon-rank statistics are recomputed at independent parameter choices.  These regression checks do
not prove completeness of the predicate or perturbation invariance.  The instanton dimension in
Table~\ref{tab:family} is Theorem~\ref{thm:inat} and is independent of all of this code.

\appendix

\section{Changes from version 2}\label{app:changes}

This is version~3.  It differs substantially from version~2 of July 30, 2026; the changes are listed
here in full.

\begin{enumerate}
\item \emph{The title has changed.}  Version~2 was titled ``\dots\ and computed bounding cochains in
the pillowcase''.  The present title names what is actually proved, Maurer--Cartan deformations in
the two-arc algebra, and no longer asserts a bounding cochain; item~(2) below is the corresponding
withdrawal.

\item \emph{The bounding-cochain claims of version~2 are withdrawn.}  Version~2 presented
Computation~\ref{comp:cochains} as three computed bounding cochains, a unique minimal one at $q=5$
and at $q=7$ and 55 at $q=11$, and claimed them as the first computed nonzero bounding
cochains on Conway-sum tangles and the first in the pillowcase theory acting by cancellation.  The
computation behind them evaluates a finite table of monogons, self-bigons, and triangles and
quadrilaterals through distinct crossings, inside a finite edge window, with untyped and unrepeated
insertions.  It therefore does not test the Maurer--Cartan equation \eqref{eq:MC}, and no minimality
or uniqueness statement follows from it.  Those outputs appear here as candidate supports for the
explicitly defined finite matrices of Section~\ref{sec:compute}.

\item \emph{An error in the $q=11$ computation is reported.}  Three of the 55 matrices
reported in version~2 do not square to zero over $\F_2$.  At an independent parameter choice the same
test returns 56 candidates, of which 42 square to zero.  A square-zero audit now
precedes every displayed statistic and is printed by the ancillary programs.

\item \emph{The ``deficiency law'' is restated as a finite statistic.}  What version~2 called the
naive Floer rank is the bigon-rank statistic \eqref{eq:big-stat}, a definition attached to one
specified finite matrix; the pattern across the rows of Table~\ref{tab:family} is recorded as a
finite observation about those matrices, not as a law and not as an identification with Lagrangian
Floer homology.  Version~2's Hypothesis 4.1, that the winding-number counts compute the operations
$\mu^k$, no longer supports any claim of this paper.

\item \emph{The instanton theorem is strengthened to integral coefficients.}  Theorem~\ref{thm:inat}
now gives $\Inat(P(-2,3,q);\Z)\cong\Z^{q+2}$, from Manion's integral freeness \cite{Manion} together
with the integral Kronheimer--Mrowka spectral sequence; version~2 stated the rational rank.  The
Alexander polynomials are quoted from Hironaka's theorem \cite{Hironaka} rather than derived by the
skein recursion of version~2, which survives only as an ancillary regression check.

\item \emph{Section~\ref{sec:typeD} is new.}  It encodes the specified $q=7$ curves as twisted
complexes over the ungraded two-arc algebra of Kotelskiy--Watson--Zibrowius used
in~\cite[\S11.5]{CHKK}, computes the undeformed wrapped morphism homology, and exhibits four
Maurer--Cartan deformations arising from geometric local smoothings, each of wrapped dimension nine.
These identities hold to all orders in that model.  Corollary~\ref{cor:q7-classes},
Proposition~\ref{prop:aux-selection}, the all-orbit census, and the local-support
Hypothesis~\ref{hyp:q7-support} on which the conditional selection rests are new as well.

\item \emph{Section~\ref{sec:setup} has new structural content.}
Proposition~\ref{prop:smith-main} identifies the main-component immersion with the curve used here up
to regular homotopy, Lemma~\ref{lem:surface-reduction} reduces the correspondence to a statement about
surfaces, and Proposition~\ref{prop:partial-triple} exhibits a triple point, so that the partial
correspondence is immersed but not embedded and Gao's wrapped representability theorem \cite{Gao} does
not apply to it.

\item \emph{The ancillary files have been replaced} to match the above, and the bibliography has been
revised accordingly.  The $q=7$ programs live in the source repository named in
Section~\ref{sec:disc}.
\end{enumerate}

\bigskip
\noindent{\sc Bernd J. Wuebben, New York, NY,} \texttt{wuebben@gmail.com}

\end{document}